\newtheorem{theorem}{Theorem}[section]
\newtheorem{lemma}[theorem]{Lemma}
\theoremstyle{definition}
\newtheorem{definition}[theorem]{Definition}
\newtheorem{example}[theorem]{Example}
\newtheorem{proposition}[theorem]{Proposition}
\newtheorem{corollary}[theorem]{Corollary}
\theoremstyle{remark}
\numberwithin{equation}{section}
\newcommand{\be}{\begin{equation}}
\newcommand{\ee}{\end{equation}}
\title{Second-Order Optimality Conditions for Nonsmooth Constrained Optimization with Applications to Bilevel Programming}
\author{Xiang Liu\thanks{\baselineskip 9pt School of Mathematical Sciences, Dalian University of Technology, Dalian 116024, China. E-mail: liuxiang@mail.dlut.edu.cn}, \
Mengwei Xu\thanks{\baselineskip 9pt Institute of Mathematics, Hebei University of Technology, Tianjin 300401, China.
E-mail: xumengw@hotmail.com. The research of this author is supported by [No. 12071342, the National Natural
Science Foundation of China], [Nos. A2020202030 and A2025202027, the Natural Science Foundation of Hebei Province] and [No. J20230701, the Open ProjectProgram of Key Laboratory of
Discrete Mathematics with Applications of Ministry of Education, Fuzhou University].}  \   \ and \ Liwei Zhang\thanks{\baselineskip 9pt
School of Mathematical Sciences, Dalian University of Technology, Dalian 116024, China. E-mail: lwzhang@dlut.edu.cn. The
research of this author was supported by [No. 2022YFA1004000, the National Key R\&D Program of China] and [No. 12371298, the National Natural Science Foundation of China]. }
}
\begin{document}
\maketitle
\begin{abstract}
Second-order optimality conditions are essential for nonsmooth optimization, where both the objective and constraint functions are Lipschitz continuous and second-order directionally differentiable. This paper provides no-gap second-order necessary and sufficient optimality conditions for such problems without requiring convexity assumptions on the constraint set. We introduce the concept of second-order gph-regularity for constraint functions, which ensures the outer second-order regularity of the feasible region and enables the formulation of comprehensive optimality conditions through the parabolic curve approach.
An important application of our results is bilevel optimization, where we derive second-order necessary and sufficient optimality conditions for bi-local optimal solutions, which are based on the
local solutions of the lower-level problem.
By leveraging  the  Mangasarian-Fromovitz constraint qualification (MFCQ),   strong second-order sufficient condition (SSOSC) and constant rank constraint
qualification (CRCQ) of lower-level problem,
these second-order conditions  are derived without  requiring the uniqueness of the lower-level
multipliers.
In addition, if the linear independence constraint qualification (LICQ) holds, these conditions are expressed solely in terms of the second-order derivatives
 of the functions defining the bilevel problem, without relying on the second-order information
 from the solution mapping, which would introduce implicit complexities.
\end{abstract}

{\bf 2020 Mathematics Subject Classification.} 90C26, 90C30, 90C46 \\

{\bf Keywords:} Nonsmooth optimization, second-order optimality conditions, outer second-order regularity, bilevel programs, strong regularity.\\

\newpage

\baselineskip 18pt
\parskip 2pt
\section{Introduction}

Second-order optimality conditions are of paramount importance in optimization theory and numerical analysis, providing fundamental criteria for identifying local minimizers and guiding the convergence of advanced numerical algorithms. For nonlinear programming with smooth data, a comprehensive framework of no-gap second-order necessary and sufficient conditions was well established  \cite{Ioffe 79,Ben-Tal 80,Ben-Tal 82}.
Nevertheless, many real-world problems involve functions that are only Lipschitz continuous and possess only second-order directional differentiability, making classical smooth analysis inadequate for capturing  local behavior.

In this paper, we investigate constrained nonsmooth optimization problems of the form
\begin{align*}
(P)~~~~\min &~~~~ f(x)\\
{\rm s.t.}&~~~~ G(x)\in K,
\end{align*}
where $f:\mathbb{R}^n\to \mathbb{R}$ and $G:\mathbb{R}^n\to \mathbb{R}^m$ are Lipschitz continuous and second-order directionally differentiable, $K\subseteq \mathbb{R}^m$ is a closed set.
The feasible set is defined by $\Psi:=\{x\in \mathbb{R}^n:G(x)\in K\}$.
The central challenge in this nonsmooth context is to derive second-order optimality conditions without imposing overly stringent regularity assumptions, such as the convexity of  $K$.

A substantial body of literature  addressed second-order optimality conditions under smooth or convex settings. In the case of convex constraint sets, the parabolic curve approach  and convex duality theory were   effectively applied, with an additional sigma term introduced to refine the analytical framework \cite{Bonnans,Bonnans 99,Cominetti 90,Shapiro 97}.
Recent advances have further extended these results to nonconvex settings; for example, Gfrerer et al. \cite{Gfrerer Ye and Zhou 22} developed both primal and dual second-order necessary conditions using second-order tangent sets and convex duality, while Mordukhovich \cite{Mordukhovich 21} replaced the sigma term with the second-order subderivative of the indicator function on $K$.
Moreover, Benko et al. \cite{Benko Gfrerer Zhou 23} relaxed classical assumptions by eliminating the need for convexity and parabolic derivability of $K$.
 
In the context of nonsmooth optimization, techniques based on second-order subderivatives and parabolic second-order epiderivatives were successfully employed for problems with nonsmooth objectives but smooth constraints (see, e.g., \cite[Section 3.3.5]{Bonnans}, \cite{Cominetti 91}, \cite{Rockafellar 89} and \cite[Chapter 13]{Rockafellar}).
Nevertheless, fully nonsmooth problems—where both the objective and the constraints are nonsmooth—remain less explored, with notable contributions addressing the special case $K=\mathbb{R}^m_-$ \cite{Ruckmann}.

In this work, we introduce the novel concept of second-order gph-regularity  for constraint functions—a critical tool that preserves the outer second-order regularity of the feasible set in nonsmooth settings.
Many simple functions, involving the absolute function, the minimal function and  piecewise $C^2$ functions, satisfy
the second-order gph-regularity.
Moreover, the locally Lipschitz inverse mapping maintains this property  under the  maximal rank condition of its subdifferential.
Using the parabolic curve approach, we establish comprehensive no-gap second-order necessary and sufficient optimality conditions for problem $(P)$ under the metric subregularity constraint qualification (MSCQ).

 An important application of our theoretical contributions lies in bilevel optimization—a class of problems characterized by a hierarchical structure and widely encountered in fields ranging from economics to engineering (see, e.g.,  \cite{Dempe 02,Dempe 06,Dempe 15, Dempe 20, Kunapuli 08}).
When the lower-level problem has a unique solution, traditional bilevel formulations typically employ the classical implicit function reformulation, which substitutes the global solution mapping of the lower-level problem into the upper-level model \cite{Falk 95,Dempe 92,  Mehlitz 17, Outrata, Shimizu 97, Zaslavski 12}.
In contrast, our approach focuses on the bi-local solution, where only the local solution mapping of the lower-level problem is incorporated into the upper-level formulation \cite{Liu 25}. This distinction is critical: while the classical approach  uses the entire global solution set of the lower-level problem, our bi-local reformulation (denoted as (SP)) captures more refined local behavior, leading to optimality conditions under milder assumptions.
Notably, when the lower-level problem is convex, the bi-local solution reduces to the classical local optimal solution.

Our analysis proceeds by establishing the second-order gph-regularity of the local solution mapping of a parametric problem and rigorously examining its second-order directional differentiability.
We then   derive both first- and second-order optimality conditions for bi-local
solutions  by the implicit function reformulation without the uniqueness assumption  of the lower-level multipliers.
Significantly,   under the SSOSC and LICQ,  these conditions are equivalent to those obtained from the classical first-order reformulation,  where the lower-level solution set is replaced by its first-order necessary conditions. This equivalence underscores the robustness of our approach, as the derived optimality conditions depend solely on the second-order information inherent in the functions defining the bilevel problem and do not require second-order data from the local solution mapping--which would introduce implicit complexities.

The remainder of this paper is organized as follows. Section \ref{section 2} introduces the basic notation and key concepts in variational analysis, including second-order gph-regularity. Section \ref{section 3} is devoted to deriving the first- and second-order optimality conditions for the nonsmooth constrained problem $(P)$.
In Section \ref{section 4}, we
demonstrate the  second-order gph-regularity of the solution mapping of a   parametric problem
Then we derive optimality conditions for bi-local solutions by the implicit function reformulation under the MFCQ, SSOSC and CRCQ,  and show the equivalence between the implicit function reformulation and the first-order reformulation under the SSOSC and LICQ.
Finally, Section \ref{section 5}  concludes the paper with a summary of our findings.

\section{Preliminaries and preliminary results}\label{section 2}
We first give notation that will be used in the paper.
Let $[n]$ denote the index set $\{1,2,\cdots,n\}$.
Denote by $\|\cdot\|$ the  $l_2$-norm of a vector $x$ and by $I_{n}$ the $n\times n$
identity matrix.  Let $\mathbb{R}^n_{-}$ denote the nonpositive orthant in $\mathbb{R}^n$.
 For a vector $x$, $x_i$ denotes the i-th component of  $x$ and we denote
 $\boldsymbol{B}_{\delta}(x)=\{x':\parallel x'-x\parallel\le \delta\}$.
  For a convex set $D\subseteq
 \mathbb{R}^n$, we denote by $\Pi_D(w)$ the projection of $w$ onto $D$ and define $d(z,D)=\inf_{y\in D}\|z-y\|$ as
 the distance from $z$ to $D$. We denote by $coD$ the convex
hull of $D$. For vectors $u,v\in \mathbb{R}^n$, we denote by $\langle u,v\rangle$ the inner product and denote
$u^Tv=0$ by $u\bot v$.
The notation $\psi (t)=o(t)$
means  $\psi(t)/t\to0$ as $t\downarrow 0$.
   For a set-valued mapping $S:\mathbb{R}^n \rightrightarrows\mathbb{R}^m $,
  we denote the graph of $S$ by
${\rm gph} S=\{(x,y)\in \mathbb{R}^n \times \mathbb{R}^m: y\in S(x)\}$.
 For a function $f :\mathbb{R}^n \times \mathbb{R}^m \to \mathbb{R} $, a mapping $g :\mathbb{R}^n \times
 \mathbb{R}^m \to \mathbb{R}^q $ and a mapping $y:\mathbb{R}^n\to \mathbb{R}^m$,
 for a fixed point $(x,y)$ with $y=y(x)$, we use $\nabla_x f(x,y(x))$ to denote $\nabla_x f(x,y)|_{y=y(x)}$;
 $\nabla_y f(x,y(x))$ to denote $\nabla_y f(x,y)|_{y=y(x)}$; $\nabla_{xx}^2 f(x,y(x))$ to denote $\nabla_{xx}^2
 f(x,y)|_{y=y(x)}$;  $\nabla_{xy}^2 f(x,y(x))$ to denote $\nabla_{xy}^2 f(x,y)|_{y=y(x)}$;
 $ \mathcal{J}_x g(x,y(x))$ to denote $\mathcal{J}_x g(x,y)|_{y=y(x)}$ and $ \mathcal{J}_y g(x,y(x))$ to denote
 $\mathcal{J}_y g(x,y)|_{y=y(x)}$.
  Let $\mathbb{N}$ denote the set of positive integers. For  $r\in \mathbb{N}$, we use $C^r$ to denote the class of mappings that are
$r$-times continuously differentiable.

We now present some background material on variational analysis which will be used throughout the paper.
Detailed discussions on these subjects can be found in \cite{Bonnans,Rockafellar}.

\begin{definition}[Tangent cones]
   Given $C\subseteq \mathbb{R}^n$ and $x^*\in C$, the tangent cone to $C$ at $x^*$ is defined by
$$\mathcal{T}_{C}(x^*):=\{d\in \mathbb{R}^n:\exists t_k\downarrow 0, d^k\to d~ \text{with}~x^*+t_kd^k\in C \}.$$
For $x^*\in C$ and $d\in \mathcal{T}_{C}(x^*)$, the outer second-order tangent set to $C$ at $x^*$ in direction
$d$ is defined by
$$\mathcal{T}_{C}^2(x^*;d):=\left\{w\in \mathbb{R}^n:\exists t_k\downarrow 0, w^k\to w~
\text{with}~x^*+t_kd+\frac{1}{2}t_k^2w^k\in C \right\}.$$
\end{definition}

 To derive the second-order sufficient optimality conditions for the nonsmooth problem with set constraint, we recall the outer second-order regularity \cite[Definition 3.85]{Bonnans}, which are held for polyhedral sets and cones of
positive semidefinite matrices.

\begin{definition}[Outer second-order regularity]
 Let $K$ be a closed subset of $\mathbb{R}^m$,  $y^*\in K$ and $d\in\mathcal{T}_{K}(y^*)$. We say that $K$ is outer second-order regular at $y^*$ in direction $d$, if for any sequence $y^k\in K$ of the form
$y^k := y^* + t_kd +\frac{1}{2}t_k^2w^k$, where $t_k\downarrow0$ and $\{w^k\}$ is a sequence satisfying
$t_kw^k\to 0$, the following condition holds
\begin{align}\label{}
  &\lim_{k\to \infty} d\left(w^k,\mathcal{T}_{K}^2(y^*;d)\right)=0.\notag
\end{align}
\end{definition}

\begin{definition}[Subdifferentials]
Let $g: \mathbb{R}^n \rightarrow \mathbb{R}^m$ be a locally Lipschitz continuous mapping  over an open set $\mathbb{R}^n$ and $x^*\in \mathbb{R}^n$.
We define the Bouligand subdifferential (B-subdifferential) of $g$ at $x^*$
\begin{equation*}
  \partial_B g(x^*)=\left\{V\in \mathbb{R}^{n\times m}:\exists x^k\in \mathcal{D}_g, \ x^k\to x^*,\
  \mathcal{J} g(x^k)\to V\right\},
\end{equation*}
where $\mathcal{D}_g$ denotes the set of differentiable points of $g$ in $\mathbb{R}^n$.
The  Clarke generalized Jacobian of $g$ at $x^*$ is
$$ \partial g(x^*):={co} \partial_B g(x^*).$$
\end{definition}

\begin{definition}[Directional derivatives]
   We say that $g:\mathbb{R}^n\to \mathbb{R}^m$ is directionally differentiable at a point $x^*$ in
direction $d$ if the limit
$$g'(x^*;d):=\lim_{t\downarrow 0}\frac{g(x^*+td)-g(x^*)}{t}$$
exists. If $g$ is directionally differentiable at $x^*$ in every direction $d\in\mathbb{R}^n $, we say that
$g$ is directionally differentiable at $x^*$. Moreover, if
$$g'(x^*;d)=\lim_{\substack{t\downarrow 0\\ d'\to d}}\frac{g(x^*+td')-g(x^*)}{t},$$
then we say that $g$ is directionally differentiable at $x^*$ in the Hadamard
sense.
\end{definition}
\begin{definition}[Second-order directional derivatives]\label{2-der}
  We say that $g$ is second-order directionally differentiable at a point $x^*$ in
direction $d$ if $g'(x^*;d)$ and the limit
$$g''(x^*;d,w):=\lim_{t\downarrow 0}\frac{g(x^*+td+\frac{1}{2}t^2w)-g(x^*)-tg'(x^*;d)}{\frac{1}{2}t^2}$$
exists for all $w\in \mathbb{R}^n$. If $g$ is second-order directionally differentiable at $x^*$ in every direction
$d\in\mathbb{R}^n $, we say that
$g$ is second-order directionally differentiable at $x^*$.
Moreover, if the limit
$$g''(x^*;d,w)=\lim_{\substack{t\downarrow 0\\ w'\to
w}}\frac{g(x^*+td+\frac{1}{2}t^2w')-g(x^*)-tg'(x^*;d)}{\frac{1}{2}t^2}$$
exists for all $w\in \mathbb{R}^n$,
then we say that $g$ is second-order directionally differentiable at $x^*$ in
direction $d$, in the sense of Hadamard.
\end{definition}
When $g$ is second-order continuously differentiable at $x^*$, then
\begin{align}\label{C2 g'= g''=}
  g'(x^*;d)&=\mathcal{J}g(x^*)d \quad\text{and}\quad
  g''(x^*;d,w)=d^T\nabla^2g(x^*)d+\mathcal{J}g(x^*)w,
\end{align}
where $d^T\nabla^2g(x^*)d:=\left[d^T\nabla^2g_1(x^*)d,\cdots,d^T\nabla^2g_m(x^*)d\right]^T$.

 It is known from \cite[Proposition 2.49]{Bonnans} that if $g$ is directionally differentiable and locally Lipschitz continuous with  modulus $c$ at $x^*\in\mathbb{R}^n$, then $g$ is directionally
differentiable at $x^*$ in the Hadamard sense and the directional derivative $g'(x^*;\cdot)$ is
Lipschitz continuous with modulus $c$ on $\mathbb{R}^n$.
We extend this result to the second-order directional differentiability.

\begin{proposition}\label{prop lipschitz--Hadamard directional differentiable}
  Suppose that $g:\mathbb{R}^n\to \mathbb{R}^m$ is second-order directionally
differentiable at $x^*$ in direction $d$ and locally Lipschitz continuous
 with  modulus $c$ at $x^*\in\mathbb{R}^n$, then $g$ is second-order directionally differentiable at $x^*$ in
direction $d$, in the Hadamard sense and
the second-order directional derivative $g''(x^*;d,\cdot)$ is Lipschitz continuous with modulus $c$ on $\mathbb{R}^n$.
\end{proposition}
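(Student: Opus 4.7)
The plan is to imitate the standard argument for the first-order case (Proposition 2.49 of Bonnans), lifted to the second-order setting. Define the second-order difference quotient
$$\phi(t,w') \;:=\; \frac{g(x^* + td + \tfrac{1}{2}t^2 w') - g(x^*) - tg'(x^*;d)}{\tfrac{1}{2}t^2},$$
so that the hypothesis of second-order directional differentiability at $x^*$ in direction $d$ says exactly that, for every fixed $w\in\mathbb{R}^n$, $\phi(t,w) \to g''(x^*;d,w)$ as $t\downarrow 0$. The goal is then to upgrade this pointwise convergence to joint convergence as $(t,w')\to(0^+,w)$, and to obtain a Lipschitz bound for $g''(x^*;d,\cdot)$.

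First I would exploit the local Lipschitz continuity of $g$ to derive the key uniform estimate. Pick $\delta>0$ on which $g$ is $c$-Lipschitz on $\boldsymbol{B}_\delta(x^*)$. For any bounded set of $w',w$ and all $t>0$ small enough, both points $x^* + td + \tfrac{1}{2}t^2 w'$ and $x^* + td + \tfrac{1}{2}t^2 w$ lie in $\boldsymbol{B}_\delta(x^*)$, whence
$$\|\phi(t,w') - \phi(t,w)\| \;=\; \frac{\|g(x^* + td + \tfrac{1}{2}t^2 w') - g(x^* + td + \tfrac{1}{2}t^2 w)\|}{\tfrac{1}{2}t^2} \;\le\; c\|w' - w\|.$$
This estimate is the workhorse of the proof: it says that $\phi(t,\cdot)$ is $c$-Lipschitz uniformly in all sufficiently small $t$.

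With this in hand, the Hadamard claim follows by a triangle-inequality split. For any sequences $t_k\downarrow 0$ and $w_k\to w$,
$$\|\phi(t_k,w_k) - g''(x^*;d,w)\| \;\le\; \|\phi(t_k,w_k) - \phi(t_k,w)\| + \|\phi(t_k,w) - g''(x^*;d,w)\|,$$
where the first term is bounded by $c\|w_k - w\|\to 0$ via the uniform estimate and the second term tends to $0$ by the assumed pointwise second-order directional differentiability. Finally, for the Lipschitz property of $g''(x^*;d,\cdot)$, fix arbitrary $w_1,w_2\in\mathbb{R}^n$, apply the estimate with any sufficiently small $t>0$, and pass to the limit $t\downarrow 0$ using pointwise convergence of $\phi(t,w_i)$ to $g''(x^*;d,w_i)$ for $i=1,2$, yielding $\|g''(x^*;d,w_1) - g''(x^*;d,w_2)\|\le c\|w_1 - w_2\|$.

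There is no serious obstacle: the only point requiring care is verifying that, for the sequences used in each part, the perturbed points $x^*+td+\tfrac{1}{2}t^2 w'$ remain inside the Lipschitz neighborhood of $x^*$, which is immediate since $td + \tfrac{1}{2}t^2 w' \to 0$ whenever $t\downarrow 0$ and $w'$ stays bounded. The Lipschitz property of $g$ does essentially all the analytic work, while the pointwise second-order directional derivative supplies the target limit.
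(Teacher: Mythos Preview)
Your proposal is correct and follows essentially the same approach as the paper: both arguments hinge on the estimate
\[
\left\|\frac{g(x^*+td+\tfrac{1}{2}t^2 w')-g(x^*+td+\tfrac{1}{2}t^2 w)}{\tfrac{1}{2}t^2}\right\|\le c\|w'-w\|,
\]
obtained from local Lipschitz continuity of $g$. The paper first passes to the limit in this inequality to get the Lipschitz bound on $g''(x^*;d,\cdot)$ and then asserts Hadamard differentiability ``from its definition''; your version makes the Hadamard step explicit via the triangle-inequality split $\|\phi(t_k,w_k)-g''(x^*;d,w)\|\le\|\phi(t_k,w_k)-\phi(t_k,w)\|+\|\phi(t_k,w)-g''(x^*;d,w)\|$, which is exactly what the paper's terse phrase is pointing to.
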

\begin{proof}
From the definition of second-order directional derivatives, we have that
  \begin{align*}
    &\left\|g''(x^*;d,w)-g''(x^*;d,v)\right\|\\
=&\left\|\lim_{t\downarrow 0}\frac{g(x^*+td
+\frac{1}{2}t^2w)-g(x^*)-tg'(x^*;d)}{\frac{1}{2}t^2}-\lim_{t\downarrow
0}\frac{g(x^*+td+\frac{1}{2}t^2v)-g(x^*)-tg'(x^*;d)}{\frac{1}{2}t^2}\right\|\\
= &\left\|\lim_{t\downarrow0}\frac{g(x^*+td+\frac{1}{2}t^2w)-g(x^*+td+\frac{1}{2}t^2v)}{\frac{1}{2}t^2}\right\|\\
\leq& \lim_{t\downarrow
0}\frac{\left\|g(x^*+td+\frac{1}{2}t^2w)-g(x^*+td+\frac{1}{2}t^2v)\right\|}{\frac{1}{2}t^2}\\
\le& c\|w-v\|,
  \end{align*}
  for all $w,v\in \mathbb{R}^n$.
 Thus $g''(x^*;d,\cdot)$ is Lipschitz continuous with modulus $c$ on $\mathbb{R}^n$, then
 it is also second-order
 directionally differentiable at $x^*$ in the direction $d$, in the Hadamard sense from its definition.
\end{proof}

In order to study sufficient optimality conditions  via the parabolic curve approach, an additional regularity condition is indispensable.
The second-order epi-regularity was presented in \cite[Definition 3.94]{Bonnans}, which was applied to the case of nonsmooth problems with inequality constraints in \cite[Proposition 2.1]{Ruckmann}.

 \begin{definition}\label{def Second-order epi-regular}[Second-order epi-regularity]
Let $g:\mathbb{R}^n\to \mathbb{R}^m$ be locally Lipschitz continuous and second-order directionally differentiable at a point $x^*$.
   We say that $g$ is second-order epi-regular at $x^*$ in
  direction $d$, if the following condition holds: for
every path $w:\mathbb{R}_{+}\to \mathbb{R}^n$ satisfying $tw(t)\to 0$ as $t\downarrow 0$, there exists  a mapping $r(x):\mathbb{R}_{+}\to \mathbb{R}^m$ such that
$r(t^2)=o(t^2)$ and
\begin{align}
  & g\left(x^*+td+\frac{1}{2}t^2w(t)\right)\ge g(x^*)+tg'(x^*;d)+\frac{1}{2}t^2g''(x^*;d,w(t))+r(t^2)\notag
\end{align}
is satisfied for $t\ge 0$. We say that $g$ is second-order epi-regular at $x^*$
if it
is second-order epi-regular at $x^*$ for every direction $d\in\mathbb{R}^n$.
\end{definition}

We introduce the concept of second-order gph-regularity, which preserves the outer second-order regularity of the feasible set for the general set constraint.
\begin{definition}\label{def Second-order gph-regular}[Second-order gph-regularity]
 Let $g:\mathbb{R}^n\to \mathbb{R}^m$ be locally Lipschitz continuous and second-order directionally differentiable at a point $x^*$.
   We say that $g$ is second-order gph-regular at $x^*$ in
  direction $d$, if the following condition holds: for
every path $w:\mathbb{R}_{+}\to \mathbb{R}^n$ satisfying $tw(t)\to 0$ as $t\downarrow 0$, there exists  a mapping $r(x):\mathbb{R}_{+}\to \mathbb{R}^m$ such that
$r(t^2)=o(t^2)$ and
\begin{align}\label{align second-order gph-regular}
  & g\left(x^*+td+\frac{1}{2}t^2w(t)\right)=g(x^*)+tg'(x^*;d)+\frac{1}{2}t^2g''(x^*;d,w(t))+r(t^2)
\end{align}
is satisfied for $t\ge 0$. We say that $g$ is second-order gph-regular at $x^*$
if it
is second-order gph-regular at $x^*$ for every direction $d\in\mathbb{R}^n$.
\end{definition}
It is clear that if $g$ is  second-order
gph-regular, then it is also second-order
epi-regular. Additionally, a twice continuously differentiable mapping  is  second-order
gph-regular and second-order
epi-regular.

The second-order gph-regularity is crucial for investigating the optimality conditions of constrained nonsmooth problems.
The second-order gph-regularity is a common property satisfied by many functions. We now provide some examples.
\begin{example}\label{abs}
  Consider an absolute value function $f(x)=|x|$, it is easy to see that
  \begin{align}
     & f'(x;d)=\left\{\begin{array}{ll}
                                        d, &x>0  \\
                                         |d|,&x=0\\
                                        -d,&x<0
                                       \end{array}\right.\text{and}\quad
     f''(x;d,w)=\left\{\begin{array}{ll}
                                         w,&x>0 ~\text{or}~ x=0,d>0 \\
                                        |w|,&  x=0,d=0\\
                                         -w,& x<0 ~\text{or}~ x=0,d<0
                                       \end{array},\right.\notag
  \end{align}
   then for all small enough $t\ge0$, it holds that $f(x+td)=f(x)+tf'(x;d)$ and
$$
f(x+td+\frac{1}{2}t^2 w) =f(x)+tf'(x;d)
   +\frac{1}{2}t^2f''(x;d,w),
$$
then  (\ref{align second-order gph-regular}) is easy to check. Thus  the absolute value function
is  second-order gph-regular. Furthermore $l_1$-norm $\|x\|_1=|x_1|+\cdots+|x_n|$ is also second-order gph-regular.
\end{example}

\begin{example}
  Consider the $l_2$-norm $f(x)=\|x\|$, it is a twice continuously differentiable function at
$x\neq 0$. Note that
\begin{align}
     & f'(0;d)=\|d\|\quad\text{and}\quad
     f''(0;d,w)=\left\{\begin{array}{ll}
                                        \|w\|,& d=0 \\
                                       \frac{\langle d,w\rangle}{\|d\|},&  d\neq0
                                       \end{array},\right.\notag
  \end{align}
   then for every path $w:\mathbb{R}_{+}\to \mathbb{R}^n$ satisfying $tw(t)\to 0$ as $t\downarrow 0$,   (\ref{align second-order gph-regular}) holds for $d=0$. When $d\neq 0$, we have
 \begin{align*}
    f\left(0+td+\frac{1}{2}t^2w(t)\right)=& \left\|td+\frac{1}{2}t^2w(t)\right\|=t\left\|d+\frac{1}{2}tw(t)\right\|
    =t\left(\|d\|+\frac{\langle d,\frac{1}{2}tw(t)\rangle}{\|d\|}+o(t)\right)\\
    =&f(0)+tf'(0;d)+\frac{1}{2}t^2f''(0;d,w(t))+o(t^2).
 \end{align*}
   Therefore $l_2$-norm satifies  second-order gph-regularity.
\end{example}

It is known that the minimum function is nonconvex. The following example shows that it is  second-order gph-regular at every point.
\begin{example}\label{example min}
Consider the  minimum function  $f(x)=\min\{x_1,\cdots,x_n\}$, it is easy to see that
  \begin{align}\label{min fun f' and f''}
     & f'(x;d)=\min\{d_i: i\in I(x)\}\quad\text{and}\quad
     f''(x;d,w)=\min\{w_i: i\in I(x,d)\},
  \end{align}
where $I(x):=\{i\in[n]: x_i=f(x)\}$ and $I(x,d):=\{i\in I(x): d_i=f'(x;d)\}$.     Then for all small enough $t\ge0$, it holds that $f(x+td)=f(x)+tf'(x;d)$ and
$$f\left(x+td+\frac{1}{2}t^2 w\right) =f(x)+tf'(x;d)  +\frac{1}{2}t^2f''(x;d,w).$$
Therefore  the  minimum function
is second-order
directionally differentiable
 at every point from $\mathbb{R}^n$.

For every path $w:\mathbb{R}_{+}\to \mathbb{R}^n$ satisfying $tw(t)\to 0$ as $t\downarrow 0$,
then all small enough $t\ge0$, it holds that
\begin{align*}
  f\left(x+td+\frac{1}{2}t^2 w(t)\right) &=\min_{i\in I(x,d)}\left\{x_i+td_i+\frac{1}{2}t^2 w_i(t)\right\}\\
  &= f(x)+tf'(x;d)+\frac{1}{2}t^2\min_{i\in I(x,d)}\{w_i(t)\} \\
  &=f(x)+tf'(x;d)  +\frac{1}{2}t^2f''(x;d,w(t)),
\end{align*}
where the last inequality holds by (\ref{min fun f' and f''}).
Therefore  the  minimum function
is  second-order gph-regular at every point from $\mathbb{R}^n$. Specially, $f(x)=\Pi_{\mathbb{R}_{-}}(x)=\min\{0,x\}$ is second-order gph-regular at every point from $\mathbb{R}^n$.
\end{example}

We now recall the definition of continuous selection \cite[Page 3]{Kuntz 94}.
  Let $r\in \mathbb{N}$, $U$ be an open subset of $\mathbb{R}^n$ and  $g:U \to \mathbb{R}^m$ be a continuous mapping. Then $g$  is called a continuous selection of $C^r$ mappings if there exists a finite number of $C^r$ mappings $g_i:U \to \mathbb{R}^m,~ i\in[l]$, such that the active index set
$I(x)=\{i\in [l]:g(x)=g_i(x)\}$
is nonempty for every $x\in U$.
The mapping $g$ is called   piecewise $C^r$ (denoted as $PC^r$)  if at every point $x\in U$ there exists a neighborhood $V\subset U$ such that the restriction of $g$ to $V$ is a continuous selection of $C^r$ mappings.

 In the following proposition, we show that $PC^2$ mappings are second-order directional  differentiable and second-order  gph-regular.

\begin{proposition}\label{prop PC2}
Let  $U$ be an open subset of $\mathbb{R}^n$. Then a $PC^2$ mapping $g: U\to \mathbb{R}^m$  is second-order
directionally differentiable
and second-order gph-regular at every $x\in U$.
\end{proposition}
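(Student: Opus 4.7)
The plan is to reduce the analysis at $x$ to finitely many $C^2$ Taylor expansions, and to show that all $C^2$ pieces relevant to $g$ along parabolic curves through $x$ share the same first and second derivatives at $x$. By the $PC^2$ hypothesis, I would shrink to a neighborhood $V$ of $x$ on which $g$ is a continuous selection of $C^2$ mappings $g_1,\ldots,g_l:V\to\mathbb{R}^m$. Setting $I(x)=\{i\in[l]:g_i(x)=g(x)\}$ and using continuity of each $g_i$, I would shrink $V$ so that $g(y)=g_i(y)$ on $V$ forces $i\in I(x)$. For any $d,w\in\mathbb{R}^n$ and $\gamma(t):=x+td+\tfrac12 t^2 w$, each composite $g_i\circ\gamma$ with $i\in I(x)$ is $C^2$ with the Taylor expansion
\[
g_i(\gamma(t))=g(x)+t\mathcal{J}g_i(x)d+\tfrac12 t^2\bigl[d^T\nabla^2 g_i(x)d+\mathcal{J}g_i(x)w\bigr]+o(t^2).
\]

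Next, I would introduce the closed sets $A_i:=\{t\ge 0:g(\gamma(t))=g_i(\gamma(t))\}$ and call $i\in I(x)$ \emph{recurrent} (along $\gamma$) if $0$ is an accumulation point of $A_i$; the recurrent $A_i$'s cover a right neighborhood of $0$. The key step is to show that all recurrent indices produce the same values of $\mathcal{J}g_i(x)d$ and $d^T\nabla^2 g_i(x)d+\mathcal{J}g_i(x)w$. I would build an undirected graph $G$ on the recurrent indices, placing an edge between $i$ and $j$ whenever $A_i\cap A_j$ accumulates at $0$. Using connectedness of $(0,\delta)$---covered by the finitely many closed sets $A_i\cap(0,\delta)$, which cannot split into two disjoint nonempty pieces---$G$ is connected. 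For each edge $(i,j)$ the $C^2$ function $(g_i-g_j)\circ\gamma$ vanishes on a sequence going to $0$, so iterated Rolle's theorem applied component-wise yields vanishing of its first and second derivatives at $0$, i.e., agreement of $\mathcal{J}g_i(x)d$ and of $d^T\nabla^2 g_i(x)d+\mathcal{J}g_i(x)w$ across each edge. Propagating along $G$ gives the required uniformity, and I define $g'(x;d)$ and $g''(x;d,w)$ as these common values. Since any subsequence $t_n\downarrow 0$ admits a sub-subsequence along which the selection $i(t_n)\in I(x)$ is a constant recurrent index, the Taylor expansion collapses to
\[
g(\gamma(t))=g(x)+tg'(x;d)+\tfrac12 t^2 g''(x;d,w)+o(t^2),
\]
which is second-order directional differentiability.

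For the second-order gph-regularity, I would substitute a path $w(t)$ with $tw(t)\to 0$ into $\tilde\gamma(t):=x+td+\tfrac12 t^2 w(t)$ and expand $g_{i(t)}$ at $x$; the cross term $t^3 d^T\nabla^2 g_{i(t)}(x)w(t)$ and the quartic $t^4\|w(t)\|^2$ lie within $o(t^2)$ by $tw(t)\to 0$. Local Lipschitz continuity of $g$---inherited from the finitely many $C^2$ pieces---yields $\|g(\tilde\gamma(t))-g(x+td)\|=o(t)$, which forces the finite-set-valued quantity $\mathcal{J}g_{i(t)}(x)d$ to equal $g'(x;d)$ for $t$ small. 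Applying the Rolle-based agreement to the parabolic curve with $w$ frozen at the instantaneous value $w(t)$ then yields $\mathcal{J}g_{i(t)}(x)w(t)+d^T\nabla^2 g_{i(t)}(x)d=g''(x;d,w(t))$, so the remainder $r(t^2)$ collapses to $o(t^2)$ as required.

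The principal obstacle is the connectedness-plus-iterated-Rolle argument for the recurrent indices: one has to verify that any two recurrent indices are linked by a chain of accumulating coincidences, which relies on a delicate topological analysis of the finite family $\{A_i\}$ near $0$, since these closed sets may interlock finely. The remaining steps are standard Taylor bookkeeping, with $tw(t)\to 0$ absorbing all cross-terms into the $o(t^2)$ error.
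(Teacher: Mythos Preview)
Your second-order directional differentiability argument is correct, and the connectedness claim you flag as the principal obstacle is in fact routine: after shrinking $\delta$ so that non-recurrent $A_j$'s and non-adjacent intersections $A_i\cap A_j$ vanish on $(0,\delta)$, a disconnection of $G$ into $C_1\sqcup C_2$ would write the connected set $(0,\delta)$ as a disjoint union of two nonempty relatively closed sets $\bigcup_{i\in C_k}A_i\cap(0,\delta)$. The paper takes a different but equally short route, restricting first to $I(x,d)$ and then to the set $I(x,d,w)$ on which all $\varphi_i''(0)$ coincide, by noting that indices with distinct $\varphi_i''(0)$ satisfy $\varphi_i(t)\neq\varphi_j(t)$ for small $t>0$ and so cannot both be selected.

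The genuine gap is in your gph-regularity step. The Rolle-based agreement for the frozen curve $\gamma_{w(t)}(s)=x+sd+\tfrac12 s^2 w(t)$ only yields $\mathcal{J}g_j(x)w(t)+d^T\nabla^2 g_j(x)d=g''(x;d,w(t))$ for indices $j$ that are \emph{recurrent for that particular frozen curve}. But $i(t)$ is merely the index selected along $\tilde\gamma$ at the single parameter value $t$; all you know is the single coincidence $g_{i(t)}(\gamma_{w(t)}(t))=g(\gamma_{w(t)}(t))$, which gives no accumulation at $0$, so there is no reason $i(t)$ should be recurrent for $\gamma_{w(t)}$. The finite-values trick that handled the first-order identity $\mathcal{J}g_{i(t)}(x)d=g'(x;d)$ does not extend, since $g''_i(x;d,w(t))-g''(x;d,w(t))$ varies with $w(t)$ through the term $(\mathcal{J}g_i(x)-\mathcal{J}g_j(x))w(t)$ and hence does not take finitely many values. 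The paper closes this gap by a diagonal argument: assuming failure along $\tau_l\downarrow0$ with a fixed selected index $i_0\in I(x,d)\setminus I(x,d,w(\tau_l))$, continuity of the selection along each frozen curve $\gamma_{w(\tau_l)}$ on $(0,\tau_l]$ produces a finite chain $i_1,\dots,i_{N_l},i_0$ (with $i_1\in I(x,d,w(\tau_l))$) of consecutive coincidences at points $u_l^\nu\in(0,\tau_l]$; after passing to a subsequence with a common chain, $u_{l_\kappa}^\nu\to0$ and $u_{l_\kappa}^\nu w(\tau_{l_\kappa})\to0$ in the uniform $C^2$ expansions force $g''_{i_\nu}(x;d,w(\tau_{l_\kappa}))-g''_{i_{\nu+1}}(x;d,w(\tau_{l_\kappa}))\to0$ for each link, hence $g''_{i_0}-g''\to0$, a contradiction. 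This diagonal chaining across varying $w(\tau_l)$ is the missing idea; freezing $w$ at $w(t)$ and invoking your part-one argument cannot, by itself, reach $i(t)$.
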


\begin{proof}
(i) We first show the second-order directional differentiability of the $PC^2$ mapping.
For any given point $x^*\in U$ and any given  vectors $d,w\in \mathbb{R}^n$, if $i\notin I(x^*)$, i.e., $g(x^*)\neq g_i(x^*)$, then there exists $\epsilon>0$
such that for $t\in(-\epsilon,\epsilon)$, $g\left(x^*+td+\frac{1}{2}t^2w\right)\neq g_i\left(x^*+td+\frac{1}{2}t^2w\right)$  by continuity.

 Due to the directional differentiability  and local Lipschitz continuity of $PC^2$ mappings,
 for any mapping $r(t):\mathbb{R}\to \mathbb{R}^n$ such that
$r(t)=o(t)$  and for any $i\in I(x^*)$,  it holds that
\begin{align}\label{g'-gi'}
   &g'(x^*;d)-g'_i(x^*;d)\notag\\
= &\lim_{t\downarrow 0}\frac{g\left(x^*+td+r(t)\right)-g(x^*)}
{t}-\frac{g_i\left(x^*+td+r(t)\right)-g_i(x^*)}
{t} \notag\\
=&\lim_{t\downarrow 0}\frac{g\left(x^*+td+r(t)\right)-g_i\left(x^*+td+r(t)\right)}
{t}.
\end{align}
 Let $r(t)=\frac{1}{2}t^2w$ and define $ I(x^*,d):=\{i\in I(x^*):g'(x^*;d)=g'_i(x^*;d)\}$. For any $i\in I(x^*)\backslash I(x^*,d)$, it holds that   $g'(x^*;d)\neq g'_i(x^*;d)$. Then by (\ref{g'-gi'}), there exists  $\epsilon_1<\epsilon$ such that
 $g\left(x^*+td+\frac{1}{2}t^2w\right)\neq g_i\left(x^*+td+\frac{1}{2}t^2w\right)$ holds when $t\in (-\epsilon_1,\epsilon_1)$.
Thus $g\left(x^*+td+\frac{1}{2}t^2w\right)\in \{g_{i}\left(x^*+td+\frac{1}{2}t^2w\right):i\in I(x^*,d)\}$.

For $t\in (-\epsilon_1,\epsilon_1)$, denote by $\varphi_i(t):=g_i\left(x^*+td+\frac{1}{2}t^2w\right)-g_i(x^*)-tg'_i(x^*;d)$ for $i\in I(x^*,d)$ and
  $$\varphi(t):=g\left(x^*+td+\frac{1}{2}t^2w\right)-g(x^*)-tg'(x^*;d).$$
The function $\varphi$
    is  a continuous selection of the $C^2$ mappings $\varphi_i$, $i\in I(x^*,d)$.
Obviously $\varphi_i(0)=0$ and $\varphi_i'(0)=0$.

 If second-order  derivatives $\varphi''_i(0)$ and $\varphi''_j(0)$ are not equal, then from the Taylor expansion theorem,
 we derive that
 \begin{align*}
    \lim_{t\downarrow 0} \frac{\varphi_i(t)-\varphi_j(t)}{1/2t^2}
    =& \lim_{t\downarrow 0} \frac{\varphi_i(t)-\varphi_i(0)-t\varphi'_i(0)}{1/2t^2}
    -\frac{\varphi_j(t)-\varphi_j(0)-t \varphi'_j(0)}{1/2t^2}\\
    =&\varphi''_i(0)-\varphi''_j(0)\neq 0.
 \end{align*}
   Hence for every sufficiently small $t>0$, it holds that $\varphi_i(t)\neq \varphi_j(t)$ and then $\varphi(t)$ achieves at most one of the values $\varphi_i(t)$ or $\varphi_j(t)$.
 Similarly as \cite[Proposition 2.1]{Kuntz 94}, we obtain that there exists $\epsilon_2<\epsilon_1$ and an index set $I(x^*,d,w)\subset I(x^*,d)$ such that for $t\in (-\epsilon_2,\epsilon_2)$,
the mapping $\varphi$ is a continuous selection of the mappings $\varphi_i, i\in I(x^*,d,w)$ and all  second-order derivatives $\varphi''_i(0), i\in I(x^*,d,w)$, coincide.
 From the fact that $I(x^*,d,w)\subseteq I(x^*,d)$, it follows that $g$ is a continuous selection of the mappings $g_i, i\in I(x^*,d,w)$.

Consequently, for every $i\in I(x^*,d,w)\subset I(x^*,d)$, we have that
 the following limit  exists and coincides with the common second-order derivative of the mappings $\varphi_i$,
\begin{align}\label{varphi''=}
   & \lim_{t\downarrow 0}\frac{\varphi(t)}{1/2t^2}=
\lim\limits_{t\downarrow 0}\frac{\varphi_i(t)-\varphi_i(0)-t\varphi_i'(0)}{1/2t^2}=\varphi''_i(0).
\end{align}
Otherwise, there exists a sequence $\{t_k\}_{k\in \mathbb{N}}$ with $t_k\downarrow 0$ such that $\lim\limits_{k \to \infty}\frac{\varphi(t_k)}{1/2t_k^2}\neq \varphi''_i(0), i\in I(x^*,d,w)$. Since the index set $I(x^*,d,w)$ is finite, then there exists a subsequence $\{t_{k_l}\}_{l\in \mathbb{N}}$ of $\{t_k\}_{k\in \mathbb{N}}$ and  some index $i\in I(x^*,d,w)$ such that $\varphi(t_{k_l})=\varphi_i(t_{k_l})$. Then using $\varphi_i(0)=0$ and $\varphi_i'(0)=0$, we derive that
$$\lim\limits_{l \to \infty}\frac{\varphi(t_{k_l})}{1/2t_{k_l}^2}=
\lim\limits_{l \to \infty}\frac{\varphi_i(t_{k_l})-\varphi_i(0)-t_{k_l}\varphi_i'(0)}{1/2t_{k_l}^2}= \varphi''_i(0),$$
which yields a contradiction.

It can be derived from the definition of $\varphi$  that
\begin{align*}
  &\lim_{t\downarrow 0}\frac{\varphi(t)}{1/2t^2}=\lim_{t \downarrow0}\frac{g\left(x^*+td+\frac{1}{2}t^2w\right)-g(x^*)-tg'(x^*;d)}{1/2t^2}=g''(x^*;d,w).
\end{align*}
Since vectors $d$ and $w$ are arbitrary,
  then the mapping $g$ is second-order
directionally differentiable at every $x^*\in U$.
Moreover, from (\ref{varphi''=}), we obtain that $ g''(x^*;d,w)=\varphi''_i(0)=g''_i(x^*;d,w)$, i.e.,
  $I(x^*,d,w)=\{i\in I(x^*,d):g''(x^*;d,w)=g''_i(x^*;d,w)\}$.

(ii) We are now ready to demonstrate that the second-order gph-regularity of the $PC^2$ mapping at $x^*$ in direction $d$ by contradiction, i.e.,
 there exists
a path $w:\mathbb{R}_{+}\to \mathbb{R}^n$ satisfying $t w(t)\to 0$ as $t\downarrow 0$ and a sequence $\{t_k\}_{k\in \mathbb{N}}$ with $t_k\downarrow 0$ such that
\begin{align}\label{g(w(tn))}
    &\lim_{k \to \infty}\frac{g\left(x^*+t_k d+\frac{1}{2}t_k^2w(t_k)\right)-g(x^*)-t_k g'(x^*;d)-\frac{1}{2}t_k^2g''(x^*;d,w(t_k))}
{t_k^2} =\alpha\neq 0.
\end{align}
Let $r(t):=\frac{1}{2}t^2w(t)$, it follows from (\ref{g'-gi'}) that $g\left(x^*+td+\frac{1}{2}t^2w(t)\right)\in \{g_i\left(x^*+td+\frac{1}{2}t^2w(t)\right):i\in I(x^*,d)\}$ for all sufficiently small $t>0$.
Since the index set $I(x^*,d)$ is finite, then there exists  a subsequence $\{\tau_l:=t_{k_l}\}_{l\in \mathbb{N}}$ of $\{t_k\}$ and an index $i_0\in I(x^*,d)$ such that
 \begin{align}\label{g=gi0}
     & g\left(x^*+\tau_l d+\frac{1}{2}\tau_l^2w(\tau_l)\right)= g_{i_0}\left(x^*+\tau_l d+\frac{1}{2}\tau_l^2w(\tau_l)\right).
 \end{align}
 It follows from the  second-order gph-regularity of $g_{i_0}$, together with  (\ref{g(w(tn))}) and (\ref{g=gi0}) that
  \begin{align}\label{g''neq gi0''}
 2\alpha =&\lim_{l \to \infty}\frac{g\left(x^*+\tau_l d+\frac{1}{2}\tau_l^2w(\tau_l)\right)-g(x^*)-\tau_l g'(x^*;d)-\frac{1}{2}\tau_l^2g''(x^*;d,w(\tau_l))}
{1/2\tau_l^2}\notag\\
=&\lim_{l \to \infty}\frac{g_{i_0}\left(x^*+\tau_l d+\frac{1}{2}\tau_l^2w(\tau_l)\right)-g_{i_0}(x^*)-\tau_l g_{i_0}'(x^*;d)-\frac{1}{2}\tau_l^2g''(x^*;d,w(\tau_l))}
{1/2\tau_l^2}\notag\\
   =& \lim\limits_{l\to \infty} g_{i_0}''(x^*;d,w(\tau_l))-g''(x^*;d,w(\tau_l))\neq0.
 \end{align}
 Furthermore, based on the above discussion and (\ref{g''neq gi0''}),  there exists  $l_0\in \mathbb{N}$ such that for every $l\ge l_0$, we have
 $$i_0\notin I(x^*,d,w(\tau_l))=\{i\in I(x^*,d):g''(x^*;d,w(\tau_l))=g''_i(x^*;d,w(\tau_l))\}.$$
Similarly as case (i), for any fixed $\tau_l$, $g$ is a continuous selection of the mappings $g_i, i\in I(x^*,d,w(\tau_l))$  and thus there exists an open interval $(0,\beta_l)$ with $\beta_l<\tau_l$ such that for all $u\in (0,\beta_l)$,
 \begin{align}\label{t in(0,beta)}
& g\left(x^*+u d+\frac{1}{2}u^2 w(\tau_l)\right)\in \left\{g_{i}\left(x^*+u d+\frac{1}{2}u^2 w(\tau_l)\right): i\in I(x^*,d,w(\tau_l))\right\}
\end{align}
and   $g\left(x^*+u d+\frac{1}{2}u^2 w(\tau_l)\right)\neq g_{i_0}\left(x^*+u d+\frac{1}{2}u^2 w(\tau_l)\right)$.

For each given $l\ge l_0$, by the continuity of $g\left(x^*+u d+\frac{1}{2}u^2 w(\tau_l)\right)$  with respect to $u$,  and in view of (\ref{g=gi0}) and (\ref{t in(0,beta)}),
  there exists a positive integer $N_l$ and some   indices satisfying
  $i_1\in I(x^*,d,w(\tau_{l})), \{i_2,i_3,\cdots,i_{N_l}\} \subseteq I(x^*)$  and $i_{N_l+1}:=i_0$
   such that
\begin{align*}
    g\left(x^*+u_{l}^{\nu} d+\frac{1}{2}(u_{l}^{\nu})^2 w(\tau_{l})\right)
   &=g_{i}\left(x^*+u_{l}^{\nu} d+\frac{1}{2}(u_{l}^{\nu})^2 w(\tau_{l})\right), i\in\{i_{\nu},i_{\nu+1}\}  ~\text{for}~ \nu\in[N_l],
   \end{align*}
   where the points  $u_{l}^1,u_{l}^2,\ldots,u_{l}^{N_l} \in (0, \tau_{l}]$.
Since the index set $I(x^*)$ is finite, the family of sets $\left\{\{i_{\nu},i_{\nu+1}\}:\nu\in[N_l]\right\}$   is finite as well.

Hence, without loss of generality,
we can find an infinite sequence $\{l_{\kappa}\}_{\kappa\in \mathbb{N}}$, and for each $\kappa$, there exists a finite   positive integer $N_0$ together with
 a common set of indices satisfying
   $i_1\in I(x^*,d,w(\tau_{l_{\kappa}})), \{i_2,i_3,\cdots,i_{N_0}\} \subseteq I(x^*)  $  and $i_{N_0+1}=i_0$
   such that
\begin{align*}
g\left(x^*+u_{l_{\kappa}}^{\nu} d+\frac{1}{2}(u_{l_{\kappa}}^{\nu})^2 w(\tau_{l_{\kappa}})\right)
   &=g_{i}\left(x^*+u_{l_{\kappa}}^{\nu} d+\frac{1}{2}(u_{l_{\kappa}}^{\nu})^2 w(\tau_{l_{\kappa}})\right), i\in\{i_{\nu},i_{\nu+1}\} ~\text{for}~ \nu\in[N_0],
   \end{align*}
with the points  $u_{l_{\kappa}}^1,u_{l_{\kappa}}^2,\ldots,u_{l_{\kappa}}^{N_0} \in (0, \tau_{l_{\kappa}}]$.
Then $u_{l_{\kappa}}^{\nu}\to 0$ and $u_{l_{\kappa}}^{\nu} w(\tau_{l_{\kappa}})\to 0$ as $\kappa\to \infty$ for each $\nu$,
 thus from the definition,   it holds that $g'(x^*;d)=g_{i_1}'(x^*;d)=\cdots=g_{i_{N_0}}'(x^*;d)=g_{i_0}'(x^*;d)$.

Since $g_{i_{\nu}}$ is a $C^2$ mapping and by (\ref{C2 g'= g''=}),      it follows that
\begin{align*}
  &g_{i_{\nu}}\left(x^*+u_{l_{\kappa}}^{\nu} d+\frac{1}{2}(u_{l_{\kappa}}^{\nu})^2 w(\tau_{l_{\kappa}})\right) \\
  = & g_{i_{\nu}}(x^*)+u_{l_{\kappa}}^{\nu}\mathcal{J}g_{i_{\nu}}(x^*)\left(d+\frac{1}{2}u_{l_{\kappa}}^{\nu} w(\tau_{l_{\kappa}})\right)+\frac{1}{2}(u_{l_{\kappa}}^{\nu})^2 d^T\nabla^2g_{i_{\nu}}(x^*)d+o((u_{l_{\kappa}}^{\nu})^2)\\
  =&g_{i_{\nu}}(x^*)+u_{l_{\kappa}}^{\nu} g'_{i_{\nu}}(x^*;d)+\frac{1}{2}(u_{l_{\kappa}}^{\nu})^2 g''_{i_{\nu}}(x^*;d,w(\tau_{l_{\kappa}}))+o((u_{l_{\kappa}}^{\nu})^2).
\end{align*}
It then holds   that
\begin{align*}
  0= &\lim_{\kappa\to \infty} \frac{g_{i_{\nu}}\left(x^*+u_{l_{\kappa}}^{\nu} d+\frac{1}{2}(u_{l_{\kappa}}^{\nu})^2 w(\tau_{l_{\kappa}})\right)-g_{i_{{\nu}+1}}\left(x^*+u_{l_{\kappa}}^{\nu} d+\frac{1}{2}(u_{l_{\kappa}}^{\nu})^2 w(\tau_{l_{\kappa}})\right)}{1/2(u_{l_{\kappa}}^{\nu})^2}\\
  =&\lim_{\kappa\to\infty}g_{i_{\nu}}''(x^*;d,w(\tau_{l_{\kappa}}))-g''_{i_{{\nu}+1}}(x^*;d,w(\tau_{l_{\kappa}})).
\end{align*}
 Then we obtain that
$$\lim_{\kappa\to\infty}g_{i_1}''(x^*;d,w(\tau_{l_{\kappa}}))-g''_{i_0}(x^*;d,w(\tau_{l_{\kappa}}))=0,$$
 where $i_1\in I(x^*,d,w(\tau_{l_{\kappa}}))$  for each $\kappa$, and it holds that
$$ \lim\limits_{\kappa\to \infty} g''(x^*;d,w(\tau_{l_{\kappa}}))-g_{i_0}''(x^*;d,w(\tau_{l_{\kappa}}))=
    \lim\limits_{\kappa\to \infty} g_{i_1}''(x^*;d,w(\tau_{l_{\kappa}}))-g_{i_0}''(x^*;d,w(\tau_{l_{\kappa}}))=0,
$$
which contradicts with (\ref{g''neq gi0''}).
 This completes the proof.
 \end{proof}

We give the following propositions to further study the properties of second-order gph-regularity.
\begin{proposition}\label{prop tg''(x^*;d,w(t)) to 0}
   Suppose that $g$ is second-order directionally differentiable  at $x^*$ in direction $d$ and locally Lipschitz continuous  with modulus $c$ at
   $x^*$. Assume that there exists a path $w:\mathbb{R}_{+}\to \mathbb{R}^n$ such that $tw(t)\to 0$ as
   $t\downarrow 0$. Then it holds that
$$\lim_{t\downarrow 0} tg''(x^*;d,w(t))= 0.$$
\end{proposition}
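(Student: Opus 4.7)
The plan is to reduce the claim to a direct estimate using the Lipschitz continuity of the second-order directional derivative, which was just established in Proposition \ref{prop lipschitz--Hadamard directional differentiable}. That proposition tells us that under the hypotheses of local Lipschitz continuity and second-order directional differentiability at $x^*$ in direction $d$, the mapping $w\mapsto g''(x^*;d,w)$ is Lipschitz continuous on $\mathbb{R}^n$ with modulus $c$.

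Using this Lipschitz property with the reference point $w=0$, I would write
\begin{align*}
\|t\,g''(x^*;d,w(t))\|
&\le \|t\,g''(x^*;d,0)\|+t\,\|g''(x^*;d,w(t))-g''(x^*;d,0)\|\\
&\le t\,\|g''(x^*;d,0)\|+c\,t\,\|w(t)\|.
\end{align*}
The first term on the right-hand side tends to $0$ as $t\downarrow 0$ because $g''(x^*;d,0)$ is a fixed vector in $\mathbb{R}^m$. The second term equals $c\,\|tw(t)\|$, which tends to $0$ by the standing assumption that $tw(t)\to 0$ as $t\downarrow 0$. Combining the two estimates gives $tg''(x^*;d,w(t))\to 0$, as required.

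The argument is essentially a one-line consequence of Proposition \ref{prop lipschitz--Hadamard directional differentiable}, so there is no real obstacle; the only mildly delicate point is simply to recognize that the path $w(t)$ need not remain bounded — but the factor $t$ in front exactly compensates this, since $tw(t)\to 0$ is built into the hypothesis. I would therefore present the proof compactly by invoking Proposition \ref{prop lipschitz--Hadamard directional differentiable} and displaying the inequality above.
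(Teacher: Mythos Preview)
Your proof is correct and follows essentially the same approach as the paper: both invoke Proposition~\ref{prop lipschitz--Hadamard directional differentiable} to get the Lipschitz estimate $\|g''(x^*;d,w(t))-g''(x^*;d,w^*)\|\le c\|w(t)-w^*\|$ for a fixed reference point, multiply by $t$, and let $t\downarrow 0$. The only cosmetic difference is that you take the reference point to be $w^*=0$, whereas the paper leaves $w^*$ generic.
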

\begin{proof}
  It follows from Proposition \ref{prop lipschitz--Hadamard directional differentiable} that for a fixed
  $w^*$ and $t\ge0$, it holds that
  $$ \left\|g''(x^*;d,w(t))-g''(x^*;d,w^*)\right\|\le  c\|w(t)-w^*\|.
  $$
By the above inequality and taking $t\downarrow 0$, we derive
 $$
    \limsup_{t\downarrow 0}\left\|tg''(x^*;d,w(t))-tg''(x^*;d,w^*)\right\|\le \limsup_{t\downarrow 0} c\|t
    w(t)-tw^*\|,
$$
 thus    $tg''(x^*;d,w(t))\to 0$ holds as $t\downarrow 0$. This completes the proof.
\end{proof}

The following proposition demonstrates that the composition of mappings that possess second-order gph-regularity  maintains the same property.

\begin{proposition}\label{prop a composition second-order gph-regular}
Let $g:\mathbb{R}^n\to \mathbb{R}^m$ and $f:\mathbb{R}^m\to
\mathbb{R}^r$.
 Suppose that $g$ and $f$  are second-order gph-regular at $x^*$ and $g(x^*)$, respectively. Then
the composite mapping $h:=f\circ g$ is second-order gph-regular at
$x^*$.
\end{proposition}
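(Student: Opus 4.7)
The plan is to use the second-order gph-regularity of $g$ to rewrite the inner argument $g(x^*+td+\tfrac12 t^2 w(t))$ in exactly the form demanded by $f$'s second-order gph-regularity at $y^*:=g(x^*)$ in direction $d':=g'(x^*;d)$, and then to invoke the Lipschitz continuity of $f''(y^*;d',\cdot)$ from Proposition \ref{prop lipschitz--Hadamard directional differentiable} to absorb the leftover discrepancies into an $o(t^2)$ remainder.

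The composite $h=f\circ g$ is locally Lipschitz, so by standard arguments it is Hadamard directionally differentiable with $h'(x^*;d)=f'(y^*;d')$. To obtain the second-order directional derivative I would first run the argument on a constant path $w(t)\equiv w$. The gph-regularity of $g$ gives
\begin{align*}
g\!\left(x^*+td+\tfrac12 t^2 w\right)=y^*+td'+\tfrac12 t^2\,\tilde v(t),\qquad \tilde v(t):=g''(x^*;d,w)+\frac{2\,r_g(t^2)}{t^2},
\end{align*}
with $r_g(t^2)=o(t^2)$, so $\tilde v(t)\to g''(x^*;d,w)$ and $t\tilde v(t)\to 0$ by Proposition \ref{prop tg''(x^*;d,w(t)) to 0}. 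Hence $f$'s gph-regularity at $y^*$ in direction $d'$ applies and produces
\begin{align*}
f\!\left(y^*+td'+\tfrac12 t^2\tilde v(t)\right)=f(y^*)+tf'(y^*;d')+\tfrac12 t^2 f''(y^*;d',\tilde v(t))+r_f(t^2),
\end{align*}
with $r_f(t^2)=o(t^2)$. Since $f''(y^*;d',\cdot)$ is Lipschitz by Proposition \ref{prop lipschitz--Hadamard directional differentiable}, replacing $\tilde v(t)$ by its limit costs only $O(r_g(t^2))=o(t^2)$, and dividing by $\tfrac12 t^2$ then yields second-order directional differentiability of $h$ together with the chain rule
\begin{align*}
h''(x^*;d,w)=f''\!\left(g(x^*);\,g'(x^*;d),\,g''(x^*;d,w)\right).
\end{align*}

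For the gph-regularity itself I would repeat the construction for a general path $w(t)$ with $tw(t)\to 0$. The gph-regularity of $g$ yields $g(x^*+td+\tfrac12 t^2 w(t))=y^*+td'+\tfrac12 t^2 v(t)$ with $v(t):=g''(x^*;d,w(t))+2r_g(t^2)/t^2$, and Proposition \ref{prop tg''(x^*;d,w(t)) to 0} combined with $r_g(t^2)/t\to 0$ gives $tv(t)\to 0$, so $f$'s gph-regularity in direction $d'$ is still applicable. Using the Lipschitz property of $f''(y^*;d',\cdot)$ to exchange $v(t)$ for $g''(x^*;d,w(t))$ within an $o(t^2)$ error and invoking the chain rule above, I obtain
\begin{align*}
h\!\left(x^*+td+\tfrac12 t^2 w(t)\right)=h(x^*)+th'(x^*;d)+\tfrac12 t^2 h''(x^*;d,w(t))+r(t^2),
\end{align*}
with $r(t^2)=o(t^2)$, which is precisely (\ref{align second-order gph-regular}) for $h$.

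The main obstacle is bookkeeping the two nested $o(t^2)$ remainders and verifying that the auxiliary curve $v(t)$ produced by $g$'s expansion actually satisfies $tv(t)\to 0$ so that $f$'s gph-regularity can be applied; this is exactly what Proposition \ref{prop tg''(x^*;d,w(t)) to 0} delivers. The Lipschitz continuity of the second argument of $f''(y^*;d',\cdot)$ is the linchpin that lets the discrepancy between $v(t)$ and $g''(x^*;d,w(t))$ be absorbed into the remainder, after which everything collapses into the required identity.
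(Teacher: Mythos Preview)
Your proof is correct and follows essentially the same route as the paper: expand $g$ via its gph-regularity, invoke Proposition~\ref{prop tg''(x^*;d,w(t)) to 0} to check that the resulting inner path satisfies $t\cdot(\text{path})\to 0$, apply $f$'s gph-regularity in direction $g'(x^*;d)$, and conclude via the second-order chain rule. The only cosmetic difference is that the paper absorbs the $o(t^2)$ remainder from $g$'s expansion immediately using the Lipschitz continuity of $f$ itself (so $f$'s gph-regularity is applied directly to the clean path $g''(x^*;d,w(t))$), whereas you carry the remainder inside $v(t)$ and clean up afterward using the Lipschitz continuity of $f''(y^*;d',\cdot)$; the paper also cites \cite[Propositions 2.47 and 2.53]{Bonnans} for the chain rule rather than deriving it from the gph-regularity as you do.
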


\begin{proof}
Let $y^*:=g(x^*)$.  It follows from \cite[Propositions 2.47 and 2.53]{Bonnans} that the composite
mapping $h$ is second-order directionally differentiable at $x^*$ and the corresponding
chain rules
\begin{align}\label{composite chain rule h''}
(f\circ g)'(x^*;d)&=f'(y^*;g'(x^*;d)),\\
  (f\circ g)''(x^*;d,w)&=f''\left(y^*;g'(x^*;d),g''(x^*;d,w)\right)\notag
\end{align}
hold.  Consider some arbitrary direction $d\in \mathbb{R}^n$ and some arbitrary path $w:\mathbb{R}_{+}\to \mathbb{R}^n$ such that
 $tw(t)\to 0$ as $t\downarrow 0$.
 And from Proposition \ref{prop tg''(x^*;d,w(t)) to 0}, we derive that
      $tg''(x^*;d,w(t))\to 0$ as $t\downarrow 0$.
 Since mappings $g$ and $f$ are second-order gph-regular at $x^*$ and $y^*$, respectively,
  it yields
\begin{align*}
&(f\circ g)\left(x^*+td+\frac{1}{2}t^2w(t)\right)\\
=&f\left(g(x^*)+tg'(x^*;d)+\frac{1}{2}t^2g''(x^*;d,w(t))+o(t^2)\right)\\
=& f\left(g(x^*)+tg'(x^*;d)+\frac{1}{2}t^2g''(x^*;d,w(t))\right) + o(t^2)\\
  =&f(y^*)+tf'(y^*;g'(x^*;d))+\frac{1}{2}t^2f''\left(y^*;g'(x^*;d),g''(x^*;d,w(t))\right)+o(t^2)\\
  =&h(x^*)+th'(x^*;d)+\frac{1}{2}t^2h''(x^*;d,w(t))+o(t^2),
\end{align*}
where  the second equality follows from the Lipschitz continuity and
 the last equality holds by (\ref{composite chain rule h''}). It is straightforward to verify that the
$h=f\circ g$ is
second-order gph-regular at $x^*$ for every direction $d$. The proof is completed.
\end{proof}

Let $\{f_i:i\in [n]\}$ be functions  satisfying    second-order gph-regular. From   Example \ref{example min} and   Proposition \ref{prop a composition second-order gph-regular}, it follows that the
 compose function $f(x)=\min\{f_1(x),\cdots,f_n(x)\}$ is  second-order gph-regular.  Many functions can be expressed as the composition of  simple functions and twice continuously differentiable functions, we derived  from the above discussions that the second-order gph-regularity is not difficult to be satisfied.

\begin{proposition}\label{prop f^-1}
  Let $f:\mathbb{R}^n\to \mathbb{R}^n$ be
   second-order gph-regular at $x^*$. If $\partial f(x^*)$ is of maximal rank,  i.e., every element in $\partial f(x^*)$ is of maximal rank, then the locally Lipschitz
  inverse mapping $f^{-1}$ is second-order directionally differentiable and second-order gph-regular at $f(x^*)$.
\end{proposition}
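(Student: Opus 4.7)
The plan is to leverage Clarke's inverse function theorem together with invariance-of-domain style arguments to invert the first- and second-order directional derivatives of $f$, and then pull the second-order gph-regularity of $f$ back through the inversion. Let $y^* := f(x^*)$. The maximal-rank hypothesis on $\partial f(x^*)$ guarantees, by Clarke's inverse function theorem, that $f^{-1}$ is well-defined and Lipschitz with some constant $L$ on a neighborhood of $y^*$, so in particular $\|f(x_1)-f(x_2)\| \geq \|x_1-x_2\|/L$ locally near $x^*$.

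\textbf{Step 1 (first order).} I would show that $f'(x^*;\cdot)\colon\mathbb{R}^n\to\mathbb{R}^n$ is a bijection and that $(f^{-1})'(y^*;e) = [f'(x^*;\cdot)]^{-1}(e)$. Injectivity follows from the lower Lipschitz bound applied to $x_i = x^*+td_i$, dividing by $t$ and letting $t\downarrow 0$. For surjectivity and existence of the directional derivative, take $s_k\downarrow 0$ and $d_k := (f^{-1}(y^*+s_ke)-x^*)/s_k$; this sequence is bounded by $L\|e\|$, any subsequential limit $d$ satisfies $f'(x^*;d)=e$ by the Hadamard form of directional differentiability (Proposition \ref{prop lipschitz--Hadamard directional differentiable} and its first-order analogue from \cite[Prop.~2.49]{Bonnans}), and injectivity forces uniqueness of $d$, so the whole sequence converges. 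The same lower Lipschitz estimate shows the inverse is $L$-Lipschitz.

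\textbf{Step 2 (second-order invertibility).} For fixed $d := (f^{-1})'(y^*;e)$, I would establish that $f''(x^*;d,\cdot)$ is also bijective with $L$-Lipschitz inverse. Applying gph-regularity of $f$ along the constant paths $w(t)\equiv w_i$ and using $\|f(x_1)-f(x_2)\|\geq \|x_1-x_2\|/L$ at $x_i = x^*+td+\tfrac{1}{2}t^2w_i$, dividing by $t^2$ and passing to the limit, yields
\[
\|w_1-w_2\| \le L\,\|f''(x^*;d,w_1)-f''(x^*;d,w_2)\|,
\]
giving injectivity and $L$-Lipschitz continuity of the inverse on its image. The image is closed (preimages of Cauchy sequences are Cauchy by the bound above, and then continuity does the rest) and, by Brouwer's invariance of domain applied to the continuous injection $f''(x^*;d,\cdot)$, also open; by connectedness of $\mathbb{R}^n$ it is all of $\mathbb{R}^n$.

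\textbf{Step 3 (second-order expansion of $f^{-1}$).} For any path $\tilde w(t)$ with $t\tilde w(t)\to 0$, set $x(t) := f^{-1}(y^* + te + \tfrac{1}{2}t^2\tilde w(t))$ and $w(t) := 2(x(t)-x^*-td)/t^2$, so $x(t) = x^* + td + \tfrac{1}{2}t^2 w(t)$. Using the Hadamard directional differentiability of $f^{-1}$ from Step 1, I would first verify $tw(t) = 2(x(t)-x^*)/t - 2d \to 0$. Second-order gph-regularity of $f$ at $x^*$ in direction $d$ then gives
\[
y^* + te + \tfrac{1}{2}t^2\tilde w(t) = y^* + te + \tfrac{1}{2}t^2 f''(x^*;d,w(t)) + o(t^2),
\]
so $f''(x^*;d,w(t)) = \tilde w(t) + o(1)$. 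Setting $v(t) := [f''(x^*;d,\cdot)]^{-1}(\tilde w(t))$ and invoking the $L$-Lipschitz inverse from Step 2 yields $w(t) = v(t) + o(1)$, i.e.\ $x(t) = x^* + td + \tfrac{1}{2}t^2 v(t) + o(t^2)$. Taking $\tilde w(t)\equiv \eta$ identifies $(f^{-1})''(y^*;e,\eta)=[f''(x^*;d,\cdot)]^{-1}(\eta)$, establishing the second-order directional derivative, and the general case gives precisely the identity in Definition \ref{def Second-order gph-regular} for $f^{-1}$ at $y^*$ in direction $e$.

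The main obstacle is the surjectivity half of Step 2. Injectivity with a Lipschitz left inverse falls out cleanly from the lower Lipschitz bound on $f$ combined with gph-regularity, but globalising this to a bijection on all of $\mathbb{R}^n$ requires either appealing to invariance of domain or, alternatively, constructing preimages directly via the path $y^* + te + \tfrac{1}{2}t^2\eta$ and showing that the resulting $w(t)$ is Cauchy (not merely bounded along subsequences); threading this carefully, so that no ambient boundedness of $w(t)$ is assumed before the gph-regularity expansion is applied, is the delicate point.
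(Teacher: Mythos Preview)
Your argument is correct, and it takes a genuinely different route from the paper's. The paper does not derive the first- and second-order directional differentiability of $f^{-1}$ from scratch: it cites \cite[Lemma 2]{Kummer} for Step~1 and \cite{Yin 25} for the second-order analogue, obtaining directly that $h=f'(x^*;d)\Leftrightarrow d=(f^{-1})'(y^*;h)$ and $v=f''(x^*;d,w)\Leftrightarrow w=(f^{-1})''(y^*;h,v)$. With these in hand, the paper's gph-regularity verification runs \emph{forward}: given a path $v(t)$ with $tv(t)\to0$, it sets $w(t):=(f^{-1})''(y^*;h,v(t))$, invokes Proposition~\ref{prop tg''(x^*;d,w(t)) to 0} (applied to $f^{-1}$) to get $tw(t)\to0$, applies the gph-regularity of $f$ to the curve $x^*+td+\tfrac12 t^2 w(t)$, and then simply pushes through the Lipschitz inverse $f^{-1}$. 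Your approach instead runs \emph{backward}: you start from $x(t):=f^{-1}(y^*+te+\tfrac12 t^2\tilde w(t))$, extract $w(t)$, and use the lower-Lipschitz bound on $f''(x^*;d,\cdot)$ to close the loop.

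The trade-off is clear. Your version is self-contained---it reproduces the content of \cite{Kummer} and \cite{Yin 25} inside the proof---but pays for surjectivity of $f''(x^*;d,\cdot)$ with Brouwer's invariance of domain, a topological result that is somewhat heavy for the context. (Your alternative Cauchy argument actually suffices and avoids it: from $f''(x^*;d,w(t))=\eta+o(1)$ and the lower Lipschitz bound one gets $w(t)$ Cauchy, hence convergent to a preimage of $\eta$.) The paper's version is shorter and sidesteps this issue entirely, at the cost of outsourcing the differentiability statements; the remaining gph-regularity step is then a two-line application of Lipschitz continuity of $f^{-1}$.
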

\begin{proof}
    The existence of the locally Lipschitz inverse mapping follows from Clarke inverse function theorem \cite[Theorem 7.1.1]{Clarke}. Moreover, it  follows  from \cite[Lemma 2]{Kummer} that there exist
  neighborhoods $U$ and $V$ of $x^*$ and $f(x^*)$, respectively, and the mapping $f^{-1}:V\to
  \mathbb{R}^n$ is  directionally differentiable at $f(x^*)$, and $d=(f^{-1})'(f(x^*);h)$ if and only if $h=f'(x^*;d)$. Furthermore,  according to \cite{Yin 25}, $f^{-1}$ is second-order directionally differentiable at $f(x^*)$, and $v=f''(x^*;d,w)$ if and only if $w=(f^{-1})''(f(x^*);h,v)$.

Therefore it remains  to verity that $f^{-1}$ is second-order gph-regular at $f(x^*)$.
Consider any direction $h\in \mathbb{R}^n$ and any path $v:\mathbb{R}_{+}\to \mathbb{R}^n$ which satisfies
$tv(t)\to 0$ as
$t\downarrow 0$.  Let
\begin{align}\label{d= ,w(t)=}
   &  d:=(f^{-1})'(f(x^*);h),\quad w(t):=(f^{-1})''(f(x^*);h,v(t)),
\end{align}
then
\begin{align}\label{h=, v(t)=}
  &h=f'(x^*;d),\quad v(t)=f''(x^*;d,w(t)).
\end{align}
And from Proposition \ref{prop tg''(x^*;d,w(t)) to 0}, we derive that
      $$tw(t)=t(f^{-1})''(f(x^*);h,v(t))\to 0,$$ as $t\downarrow 0$.
 Thus the second-order gph-regularity of $f$ at $x^*$ implies
\begin{align*}
  & f\left(x^*+td+\frac{1}{2}t^2w(t)\right)=f(x^*)+tf'(x^*;d)+\frac{1}{2}t^2f''(x^*;d,w(t))+o(t^2).
\end{align*}
     Since $f^{-1}$ is locally Lipschitz continuous at $f(x^*)$, then
\begin{align}%
  &\left\|f^{-1}\left(f(x^*)+tf'(x^*;d)+\frac{1}{2}t^2f''(x^*;d,w(t))\right)
  -f^{-1}\left(f\left(x^*+td+\frac{1}{2}t^2w(t)\right)\right)\right\|\le o(t^2).\notag
\end{align}
 Hence
 \begin{align}\label{f^-1 Lipschitz continuous}
     f^{-1}\left(f(x^*)+tf'(x^*;d)+\frac{1}{2}t^2f''(x^*;d,w(t))\right)
     &=f^{-1}\left(f\left(x^*+td+\frac{1}{2}t^2w(t)\right)\right)+ o(t^2)\notag\\
     &=x^*+td+\frac{1}{2}t^2w(t)+ o(t^2).
     \end{align}
 Plugging (\ref{d= ,w(t)=}) and (\ref{h=, v(t)=}) into (\ref{f^-1 Lipschitz continuous}), it follows that
     \begin{align*}
        &f^{-1}\left(f(x^*)+th+\frac{1}{2}t^2v(t)\right)\\
        =&f^{-1}\left(f(x^*)\right)+t(f^{-1})'(f(x^*);h)+\frac{1}{2}t^2(f^{-1})''(f(x^*);h,v(t))+
        o(t^2).
     \end{align*}
     Therefore $f^{-1}$ is second-order gph-regular at $f(x^*)$ for every direction $h$. The proof is completed.
\end{proof}

Let $H:\mathbb{R}^n \times \mathbb{R}^m\to  \mathbb{R}^m$ be a given mapping satisfying second-order directionally differentiable and second-order gph-regular.
  The projection of the Clarke generalized Jacobian onto the $y$-component is denoted by $\pi_y\partial H(x,y)$, as the set
$$\{Z_2\in \mathbb{R}^{m\times m}: \text{there exists some}~ Z_1\in \mathbb{R}^{m\times n} ~\text{such that}~  [Z_1~Z_2]\in \partial H(x,y)\}.$$
Denote by $y(x)$ the implicit solution mapping  of the system $H(x,y)=0$  under Clarke implicit function theorem.
In the rest of this section, we demonstrate that  $y(x)$
 is  second-order directionally differentiable and  second-order gph-regular.

\begin{corollary}\label{cor implicit function outer second-order gph-regular}
Let $(x^*, y^*) \in \mathbb{R}^n \times \mathbb{R}^m$ be a point at which the mapping $H$ is   second-order directionally differentiable and second-order gph-regular.
  If $\pi_y\partial H(x^*,y^*)$  is of maximal
rank and $H(x^*,y^*)=0$,
 then there exist constants
$\delta>0$ and $\varepsilon>0$, and  a locally Lipschitz continuous mapping $
y(\cdot):\mathbb{B}_{\delta}(x^*)\to\mathbb{B}_{\varepsilon}(y^*)$ such that $y(x^*)=y^*$ and $$H(x,y(x))=0, \quad \forall x\in\mathbb{B}_{\delta}(x^*).$$
 Moreover, the mapping $y(x)$ is second-order directionally differentiable and second-order gph-regular at
$x^*$.
\end{corollary}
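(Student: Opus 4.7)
The plan is to reduce the statement to Proposition~\ref{prop f^-1} (locally Lipschitz inverse) and Proposition~\ref{prop a composition second-order gph-regular} (composition) via the standard graph-straightening trick. The existence of the locally Lipschitz mapping $y(\cdot)$ with $H(x,y(x))=0$ on $\mathbb{B}_{\delta}(x^*)$ is nothing but Clarke's implicit function theorem, since the maximal rank of $\pi_y\partial H(x^*,y^*)$ gives the required surjectivity along the $y$-coordinate. So the real work is in transferring the second-order gph-regularity of $H$ to $y(\cdot)$.

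The key device is to set
\[
F:\mathbb{R}^n\times\mathbb{R}^m\to\mathbb{R}^n\times\mathbb{R}^m,\qquad F(x,y):=(x,H(x,y)).
\]
First I would argue that $F$ is second-order directionally differentiable and second-order gph-regular at $(x^*,y^*)$: the first block of $F$ is the smooth identity and the second block is $H$, and applying the definition componentwise with $H'((x^*,y^*);(d_x,d_y))$ and $H''((x^*,y^*);(d_x,d_y),(w_x,w_y))$ yields the expansion in (\ref{align second-order gph-regular}) for $F$ with the same remainder. Next I would characterize $\partial F(x^*,y^*)$: every element has the block form $\bigl[\begin{smallmatrix} I_n & 0 \\ Z_1 & Z_2\end{smallmatrix}\bigr]$ with $[Z_1\ Z_2]\in\partial H(x^*,y^*)$, so its determinant equals $\det(Z_2)$. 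The maximal rank assumption on $\pi_y\partial H(x^*,y^*)$ thus forces every element of $\partial F(x^*,y^*)$ to be nonsingular.

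With these two facts in hand, Proposition~\ref{prop f^-1} applies to $F$ and produces a locally Lipschitz inverse $F^{-1}$ that is second-order directionally differentiable and second-order gph-regular at $F(x^*,y^*)=(x^*,0)$. Now write the implicit function as
\[
y(x)=\pi_y\circ F^{-1}\circ\iota(x),\qquad \iota(x):=(x,0),\ \pi_y(u,v):=v.
\]
Since $\iota$ and $\pi_y$ are $C^{\infty}$, hence second-order gph-regular at every point, Proposition~\ref{prop a composition second-order gph-regular} applied twice gives that $y(\cdot)$ is second-order directionally differentiable and second-order gph-regular at $x^*$. Uniqueness of the implicit solution in the neighborhood $\mathbb{B}_{\varepsilon}(y^*)$ guarantees that this composition indeed coincides with the mapping provided by Clarke's theorem.

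The delicate step is the second paragraph: correctly verifying that augmenting $H$ with the identity preserves second-order gph-regularity (this is routine but must be done from the definition, since gph-regularity is not automatically closed under direct products unless one checks the vector equation $r(t^2)=o(t^2)$ componentwise), and extracting the block structure of $\partial F(x^*,y^*)$ so that the maximal rank of $\pi_y\partial H$ translates to the maximal rank of $\partial F$. Once these bookkeeping steps are settled, the conclusion is an immediate composition of the two earlier propositions, so no new analytic idea beyond what has already been developed is required.
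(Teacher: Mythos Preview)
Your proposal is correct and follows essentially the same route as the paper: define $F(x,y)=(x,H(x,y))$, verify its second-order gph-regularity and the maximal rank of $\partial F(x^*,y^*)$, apply Proposition~\ref{prop f^-1} to $F$, and then read off $y(\cdot)$ from $F^{-1}(\cdot,0)$. The only cosmetic differences are that the paper handles the last step by restricting directions and paths to the form $(d_x,0)$, $(w_x,0)$ in Definition~\ref{def Second-order gph-regular} rather than invoking Proposition~\ref{prop a composition second-order gph-regular} with $\iota$ and $\pi_y$; both are equivalent.
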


\begin{proof}
The first part of the corollary follows from Clarke   implicit function theorem \cite[p255]{Clarke}.
Therefore, it remains to prove that $y(x)$ is second-order directionally differentiable and second-order gph-regular at $x^*$.

      Define $\phi$ from $\mathbb{R}^{n+m}$ to itself by
            $$\phi(x,y):=[x;H(x,y)],$$
            where $[x;H(x,y)]:=[x^T,H(x,y)^T]^T$.
             By
         Proposition \ref{prop a composition second-order gph-regular}, it follows that  $\phi$ is
           second-order directionally differentiable and second-order gph-regular at every point.
By the chain rule, we derive that
 $$\partial
\phi(x^*,y^*)\subseteq\left\{\left[\begin{array}{cc}
                                                     I_n &0 \\
                                                    Z_1 & Z_2
                                                   \end{array}\right]:[Z_1~Z_2]\in \partial
                                                   H(x^*,y^*)\right\}.$$
  Since $\pi_y\partial H(x^*,y^*)$  is of maximal
rank, then $\partial \phi(x^*,y^*)$  is  also of maximal
rank.
             By
         Proposition \ref{prop f^-1}, we derive that $\phi^{-1}$ is also second-order directionally
         differentiable and second-order gph-regular at $\phi(x^*,y^*)=[x^*;0]$.
      Notice that
           $$\phi^{-1}\circ \phi(x,y)=\phi^{-1}([x;H(x,y)])=(x,y)$$
 holds locally around $(x^*,y^*)$. By plugging $y=y(x)$ into the above equality, we derive that
 $$\phi^{-1}([x;0])=(x,y(x)),$$
 then $\phi^{-1}([x;0])$ is  second-order gph-regular at $x^*$.
  Indeed, it only takes the direction $d=(d_x,0)$ and the path $w=(w_x,0)$ in Definition \ref{def Second-order gph-regular} to get.

Therefore the solution mapping $y(x)$, which is  the component of $\phi^{-1}([x;0])$,
         is second-order
directionally differentiable and second-order gph-regular at $x^*$. The proof is completed.
         \end{proof}

\section{Optimality conditions for nonsmooth optimization problems by directional derivatives}\label{section 3}

In this section, we analyze the first- and second-order necessary optimality conditions and  second-order
 sufficient optimality conditions for the nonsmooth optimization problem (P).

\begin{definition}\label{def MSCQ}(MSCQ). Let $x^*\in \Psi$. We say that
the MSCQ for $\Psi$ holds at $x^*$ if there exist a neighborhood
$U$ of $x^*$ and a constant $\kappa>0$ such that
\begin{align}\label{MSCQ}
  d(x,\Psi)\le \kappa d(G(x),K),\ \forall x\in U.
\end{align}
\end{definition}

\begin{proposition}\label{prop-tangent cone=linearized}
 Let $K$ be a closed subset of $\mathbb{R}^m$ and a mapping $G:\mathbb{R}^n\to \mathbb{R}^m$ be locally Lipschitz
 continuous and directionally differentiable at $x^*$. Suppose that MSCQ for $\Psi$ holds at $x^*$, then
  \begin{align}\label{tangent cone}
   &\mathcal{T}_{\Psi}(x^*)=\{d\in \mathbb{R}^n:G'(x^*;d)\in \mathcal{T}_{K}(G(x^*))\}.
  \end{align}
  \end{proposition}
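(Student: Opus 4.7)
The plan is to prove the two inclusions making up \eqref{tangent cone} separately. The inclusion $\mathcal{T}_{\Psi}(x^*) \subseteq \{d : G'(x^*;d) \in \mathcal{T}_K(G(x^*))\}$ should not require the MSCQ at all and only uses the Hadamard form of directional differentiability. Given $d \in \mathcal{T}_{\Psi}(x^*)$, pick sequences $t_k \downarrow 0$ and $d^k \to d$ with $x^* + t_k d^k \in \Psi$, i.e., $G(x^*+t_k d^k) \in K$. Because $G$ is locally Lipschitz and directionally differentiable, \cite[Proposition 2.49]{Bonnans} (already invoked in the excerpt) upgrades its directional derivative to the Hadamard sense, so
\[
\frac{G(x^*+t_k d^k) - G(x^*)}{t_k} \longrightarrow G'(x^*;d).
\]
Since $G(x^*+t_k d^k) \in K$, this difference quotient witnesses $G'(x^*;d) \in \mathcal{T}_K(G(x^*))$ directly from the definition of the tangent cone.

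The reverse inclusion is the one that genuinely uses MSCQ. Fix $d$ with $G'(x^*;d) \in \mathcal{T}_K(G(x^*))$. By the definition of the tangent cone to $K$, select $t_k \downarrow 0$ and $w^k \to G'(x^*;d)$ with $G(x^*) + t_k w^k \in K$. The Hadamard directional differentiability of $G$ gives
\[
G(x^* + t_k d) = G(x^*) + t_k G'(x^*;d) + o(t_k),
\]
so comparing with the feasible points $G(x^*) + t_k w^k \in K$ yields
\[
d\bigl(G(x^* + t_k d),\, K\bigr) \;\le\; \bigl\|\,G(x^*+t_k d) - G(x^*) - t_k w^k\,\bigr\| \;=\; t_k \|G'(x^*;d) - w^k\| + o(t_k) = o(t_k).
\]
Since $x^* + t_k d$ lies in the MSCQ neighborhood $U$ for $k$ large, the estimate \eqref{MSCQ} gives $d(x^*+t_k d, \Psi) \le \kappa\, d(G(x^*+t_k d), K) = o(t_k)$. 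Pick $x^k \in \Psi$ with $\|x^k - (x^* + t_k d)\| = o(t_k)$ and set $d^k := (x^k - x^*)/t_k$; then $d^k \to d$ and $x^* + t_k d^k = x^k \in \Psi$, establishing $d \in \mathcal{T}_{\Psi}(x^*)$.

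The only subtle point, and what I would flag as the main obstacle, is that $G'(x^*;d) \in \mathcal{T}_K(G(x^*))$ only guarantees a \emph{liminf} rather than a \emph{lim} vanishing of $d(G(x^*) + t G'(x^*;d), K)/t$ as $t \downarrow 0$, so one must not try to use an arbitrary sequence $t_k$ and a fixed direction $G'(x^*;d)$ to approximate $K$. The correct move is to use the sequence $(t_k, w^k)$ coming from the tangent-cone definition and then absorb the mismatch $w^k - G'(x^*;d)$ into $o(t_k)$ via the Hadamard property. Once this is done, MSCQ converts the $o(t_k)$ infeasibility at the linearized level into the $o(t_k)$ perturbation $d^k - d$ needed to witness membership in $\mathcal{T}_{\Psi}(x^*)$.
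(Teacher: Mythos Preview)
Your proof is correct and follows essentially the same approach as the paper's: the forward inclusion via Hadamard directional differentiability (without MSCQ), and the reverse inclusion by choosing a tangent-cone sequence $(t_k,w^k)$, estimating $d(G(x^*+t_kd),K)=o(t_k)$, and then invoking MSCQ to pull this back to $d(x^*+t_kd,\Psi)=o(t_k)$. Your explicit construction of $d^k$ and your remark on the liminf subtlety make steps fully explicit that the paper leaves implicit, but the underlying argument is the same.
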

\begin{proof}
  For any point $d\in \mathcal{T}_{\Psi}(x^*)$, there exist sequences $t_k\downarrow 0$ and $d^k\to d$ such that
   $x^*+t_k d^k\in \Psi$, i.e., $G(x^*+t_k d^k)\in K$. Therefore, and since the mapping $G$ is locally Lipschitz
   continuous and directionally differentiable at $x^*$, by \cite[Proposition 2.49]{Bonnans}, it follows that
   $$G(x^*+t_k d^k)=G(x^*)+t_k G'(x^*;d)+o(t_k)\in K,$$ i.e., $G'(x^*;d)\in \mathcal{T}_{K}(G(x^*))$.

   Conversely, for any point $d$ satisfying $G'(x^*;d)\in \mathcal{T}_{K}(G(x^*))$, there exists a sequence
   $t_k\downarrow 0$ such that $d(G(x^*)+t_k G'(x^*;d),K)=o(t_k)$.  It follows  that
   $$
   d(G(x^*+t_k d),K)\le d(G(x^*)+t_k G'(x^*;d),K)+o(t_k)=o(t_k).
   $$
   Thus it follows from (\ref{MSCQ}) that $d(x^*+t_k d,\Psi)= o(t_k)$, i.e., $d\in \mathcal{T}_{\Psi}(x^*)$.
 \end{proof}

\begin{theorem}[First-order necessary optimality conditions]\label{thm FONC for nonsmooth}
 Let $x^*$ be a locally optimal solution of $(P)$. Suppose that $K$ is a closed subset of $\mathbb{R}^m$ and
 $f$ and $G$ are locally Lipschitz continuous and directionally differentiable at $x^*$. Then the
 following assertions hold:
 \begin{itemize}
   \item[(i)]The point $d=0$ is an optimal solution of the problem
   \begin{align}\label{p 1}
      & \min~  f'(x^*;d)~~~{\rm s.t.}~ d\in \mathcal{T}_{\Psi}(x^*).
   \end{align}
   \item [(ii)]If MSCQ for $\Psi$ holds at $x^*$, then $d=0$ is an optimal solution of the problem
   \begin{align}
      & \min ~ f'(x^*;d)~~~{\rm s.t.}~G'(x^*;d)\in \mathcal{T}_{K}(G(x^*)).\notag
   \end{align}
 \end{itemize}
\end{theorem}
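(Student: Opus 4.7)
The plan is to handle (i) directly from the definition of a local minimizer and the Hadamard-sense directional differentiability guaranteed by Lipschitz continuity, and then to obtain (ii) as an immediate corollary of (i) by invoking Proposition~\ref{prop-tangent cone=linearized}.

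For part (i), I would fix an arbitrary $d \in \mathcal{T}_{\Psi}(x^*)$ and produce sequences $t_k \downarrow 0$ and $d^k \to d$ with $x^* + t_k d^k \in \Psi$. Because $f$ is locally Lipschitz continuous and directionally differentiable at $x^*$, by \cite[Proposition~2.49]{Bonnans} it is directionally differentiable at $x^*$ in the Hadamard sense, so
\begin{equation*}
f(x^* + t_k d^k) = f(x^*) + t_k f'(x^*;d) + o(t_k).
\end{equation*}
Local optimality of $x^*$ forces $f(x^* + t_k d^k) \ge f(x^*)$ for all large $k$; dividing by $t_k$ and letting $k \to \infty$ yields $f'(x^*;d) \ge 0 = f'(x^*;0)$. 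Since $0 \in \mathcal{T}_{\Psi}(x^*)$, this says exactly that $d=0$ solves \eqref{p 1}.

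For part (ii), under MSCQ the characterization \eqref{tangent cone} from Proposition~\ref{prop-tangent cone=linearized} gives
\begin{equation*}
\mathcal{T}_{\Psi}(x^*) = \{d \in \mathbb{R}^n : G'(x^*;d) \in \mathcal{T}_{K}(G(x^*))\},
\end{equation*}
so the two feasible sets coincide and part (ii) is an immediate restatement of part (i).

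There is no serious obstacle here; the only subtlety worth flagging is ensuring Hadamard-sense differentiability along the sequence $d^k \to d$ rather than the weaker one-direction limit, which is precisely what the Lipschitz hypothesis buys us via \cite[Proposition~2.49]{Bonnans}. Without that upgrade one cannot pass from $x^* + t_k d^k$ (with $d^k$ varying) to the expansion in the fixed direction $d$, and the argument would stall.
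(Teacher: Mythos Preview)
Your proposal is correct and follows essentially the same approach as the paper: for (i) you pick sequences $t_k\downarrow 0$, $d^k\to d$ with $x^*+t_kd^k\in\Psi$, use Hadamard directional differentiability (via \cite[Proposition~2.49]{Bonnans}) together with local optimality to get $f'(x^*;d)\ge 0=f'(x^*;0)$, and for (ii) you invoke Proposition~\ref{prop-tangent cone=linearized} exactly as the paper does. Your explicit flagging of the Hadamard upgrade is, if anything, a bit more careful than the paper, which uses it implicitly.
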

\begin{proof}
  (i) Let $d\in \mathcal{T}_{\Psi}(x^*)$. Then by the definition of the tangent set, there
exist sequences $t_k\downarrow 0$ and $d^k\to d$ such that $x^*+t_kd^k\in \Psi$. Since $x^*$ is a
locally optimal solution of $(P)$ and $f$ is locally Lipschitz continuous and directionally differentiable at
$x^*$, it follows that
\begin{align*}
   & 0\le \lim_{k\to \infty}\frac{f(x^*+t_k d^k)-f(x^*)}{t_k}=f'(x^*;d),
\end{align*}
and $f'(x^*;0)=0$, consequently $d=0$ is an optimal solution of the problem (\ref{p 1}).

(ii) By Proposition \ref{prop-tangent cone=linearized}, we have that if MSCQ for $\Psi$ holds at $x^*$, then
(\ref{tangent cone}) holds.
Combining this with assertion (i), we obtain the conclusion.
\end{proof}

 \begin{proposition}\label{prop-second-order tangent cone=linearized}
   Let $K$ be a closed subset of $\mathbb{R}^m$ and
 $G:\mathbb{R}^n\to \mathbb{R}^m$ be locally Lipschitz continuous and second-order directionally
 differentiable at $x^*$. Suppose that MSCQ for $\Psi$ holds at $x^*$, then for any $d\in \mathbb{R}^n$,
  \begin{align}\label{second-order tangent set=}
   &\mathcal{T}^2_{\Psi}(x^*,d)=\left\{w\in \mathbb{R}^n:G''(x^*;d,w)\in
   \mathcal{T}_{K}^2(G(x^*),G'(x^*;d))\right\}.
  \end{align}
\end{proposition}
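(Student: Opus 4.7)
The plan is to mirror the structure of Proposition \ref{prop-tangent cone=linearized}, now at the second-order level, exploiting Proposition \ref{prop lipschitz--Hadamard directional differentiable} (which upgrades the second-order directional differentiability of $G$ to the Hadamard sense) and using MSCQ to pull approximately feasible parabolic arcs back onto $\Psi$.

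For the inclusion $\subseteq$, I would take $w\in\mathcal{T}^2_{\Psi}(x^*,d)$, so that there exist $t_k\downarrow 0$ and $w^k\to w$ with $x^*+t_kd+\tfrac{1}{2}t_k^2 w^k\in\Psi$, i.e.\ $G\bigl(x^*+t_kd+\tfrac{1}{2}t_k^2 w^k\bigr)\in K$. Since $G$ is locally Lipschitz and second-order directionally differentiable at $x^*$ in direction $d$, Proposition \ref{prop lipschitz--Hadamard directional differentiable} gives the Hadamard version, so
\[
G\bigl(x^*+t_kd+\tfrac{1}{2}t_k^2w^k\bigr)=G(x^*)+t_kG'(x^*;d)+\tfrac{1}{2}t_k^2 G''(x^*;d,w)+o(t_k^2).
\]
Setting $v^k:=G''(x^*;d,w)+o(1)$, the sequence $G(x^*)+t_kG'(x^*;d)+\tfrac{1}{2}t_k^2 v^k$ lies in $K$ with $v^k\to G''(x^*;d,w)$, which by definition gives $G''(x^*;d,w)\in\mathcal{T}_K^2(G(x^*),G'(x^*;d))$.

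For the reverse inclusion $\supseteq$, I would take $w$ with $G''(x^*;d,w)\in\mathcal{T}_K^2(G(x^*),G'(x^*;d))$, so there exist $t_k\downarrow 0$ and $v^k\to G''(x^*;d,w)$ with $G(x^*)+t_kG'(x^*;d)+\tfrac{1}{2}t_k^2 v^k\in K$. Using the Hadamard second-order expansion again,
\[
\left\|G\bigl(x^*+t_kd+\tfrac{1}{2}t_k^2 w\bigr)-\bigl(G(x^*)+t_kG'(x^*;d)+\tfrac{1}{2}t_k^2 v^k\bigr)\right\|=\tfrac{1}{2}t_k^2\|G''(x^*;d,w)-v^k\|+o(t_k^2)=o(t_k^2),
\]
so $d\bigl(G(x^*+t_kd+\tfrac{1}{2}t_k^2 w),K\bigr)=o(t_k^2)$. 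Now MSCQ kicks in: $d(x^*+t_kd+\tfrac{1}{2}t_k^2 w,\Psi)\le \kappa\, d(G(x^*+t_kd+\tfrac{1}{2}t_k^2w),K)=o(t_k^2)$. Picking $z^k\in\Psi$ with $\|z^k-(x^*+t_kd+\tfrac{1}{2}t_k^2w)\|=o(t_k^2)$ and defining $w^k:=(z^k-x^*-t_kd)/(\tfrac{1}{2}t_k^2)$, one checks $w^k\to w$, whence $w\in\mathcal{T}^2_{\Psi}(x^*,d)$.

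The main obstacle is the reverse direction: one has only a \emph{nearby} parabolic arc living in $K$ in $y$-space, and must transfer this to an actual parabolic arc living in $\Psi$ in $x$-space. This is precisely the role of MSCQ, together with a clean handling of the $o(t_k^2)$ error from the Hadamard second-order expansion to guarantee that the reconstructed coefficients $w^k$ converge to $w$ rather than merely stay bounded.
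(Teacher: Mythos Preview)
Your proposal is correct and follows essentially the same route as the paper. The only cosmetic difference is in the $\subseteq$ direction: the paper fixes $w$, writes $x^k:=x^*+t_kd+\tfrac{1}{2}t_k^2w$ with $d(x^k,\Psi)=o(t_k^2)$, and then uses the Lipschitz modulus of $G$ to transfer this to $d(G(x^k),K)=o(t_k^2)$ before invoking the (ordinary) second-order expansion, whereas you work directly with the varying $w^k$ and appeal to the Hadamard form from Proposition~\ref{prop lipschitz--Hadamard directional differentiable}; both arguments are equivalent, and for the reverse inclusion the paper simply remarks that one argues ``in the inverse order and using the MSCQ'', which is exactly what you spell out.
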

\begin{proof}
 For any $d\in \mathcal{T}_{\Psi}(x^*)$, consider any point $w\in \mathcal{T}^2_{\Psi}(x^*,d)$, there exists a sequence $t_k\downarrow 0$ and let
  $x^k:=x^*+t_k d+\frac{1}{2}t_k^2w$ be the corresponding parabolic sequence, such that
  $d(x^k,\Psi)=o(t_k^2)$. From Proposition \ref{prop-tangent
  cone=linearized}, we have that
  $G'(x^*;d)\in \mathcal{T}_{K}(G(x^*))$. Since $G$ is second-order directionally differentiable at $x^*$, we have
  \begin{align}\label{G second-order parabolic expansion}
    G(x^k)=G(x^*)+t_k G'(x^*;d)+\frac{1}{2}t_k^2G''(x^*;d,w)+o(t_k^2).
  \end{align}
  Since $G$ is locally Lipschitz continuous at $x^*$
 (with modulus $L_G>0$), it follows that
  $$d(G(x^k),K)=\inf_{y\in\Psi}\|G(x^k)-G(y)\|\le L_G\cdot\inf_{y\in\Psi}\|x^k-y\|=L_G\cdot d(x^k,\Psi)=o(t_k^2),$$
  thus $d(G(x^k),K)\le o(t_k^2)$.
   Together with (\ref{G second-order parabolic expansion}) this implies that
  \begin{align}\label{G-second-order tangent set}
   G''(x^*;d,w)\in \mathcal{T}_{K}^2(G(x^*),G'(x^*;d)).
  \end{align}

The inverse inclusion can be obtained by applying the above arguments in the inverse order and using the MSCQ, similarly as the proof of  Proposition \ref{prop-tangent cone=linearized}.
\end{proof}

The chain rule (\ref{second-order tangent set=}) was derived in \cite[Lemma 2.5]{Mehlitz 20} under the MSCQ for the case that the mapping $G$ is  twice continuously differentiable.

\begin{theorem}[Second-order necessary optimality conditions]\label{thm SONC for nonsmooth}
 Let $x^*$ be a locally optimal solution of $(P)$. Suppose that $K$ is a closed subset of  $\mathbb{R}^m$ and
 $f$ and $G$ are locally Lipschitz continuous and second-order directionally differentiable at
 $x^*$. Then the following assertions hold:
 \begin{itemize}
   \item[(i)]For every $d\in \mathcal{T}_{\Psi}(x^*)$ with $f'(x^*;d)\le 0$ and all $w\in \mathcal{T}_{\Psi}^2(x^*,d)$, it follows that
   \begin{align*}
      f''(x^*;d,w)\ge 0.
   \end{align*}
   \item [(ii)]If MSCQ for $\Psi$ holds at $x^*$, then
    for every $d\in \mathcal{C}(x^*)$ and all $w\in
       \mathbb{R}^n$ satisfying (\ref{G-second-order tangent set}), it follows that
   \begin{align*}
      &  f''(x^*;d,w)\ge 0,
   \end{align*}
   where   $\mathcal{C}(x^*):=\left\{d\in \mathbb{R}^n: G'(x^*;d)\in \mathcal{T}_{K}(G(x^*)),f'(x^*;d)\le
   0\right\}$ denotes the critical cone of
   $(P)$ at $x^*$.
 \end{itemize}
\end{theorem}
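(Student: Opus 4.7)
The plan is to prove part (i) directly from the definitions of the second-order tangent set and the second-order directional derivative (used in the Hadamard sense guaranteed by Proposition \ref{prop lipschitz--Hadamard directional differentiable}), and then to deduce part (ii) by substituting the set-theoretic characterization of $\mathcal{T}^2_{\Psi}(x^*;d)$ furnished by Proposition \ref{prop-second-order tangent cone=linearized} under the MSCQ.

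For part (i), fix $d\in\mathcal{T}_{\Psi}(x^*)$ with $f'(x^*;d)\le 0$. Since $x^*$ is a local minimum, Theorem \ref{thm FONC for nonsmooth}(i) yields $f'(x^*;d)\ge 0$, hence $f'(x^*;d)=0$. Now let $w\in\mathcal{T}^2_{\Psi}(x^*;d)$. By definition there exist $t_k\downarrow 0$ and $w^k\to w$ such that
\[
x^k:=x^*+t_kd+\tfrac{1}{2}t_k^2 w^k\in\Psi.
\]
Local optimality of $x^*$ then gives $f(x^k)\ge f(x^*)$ for all sufficiently large $k$. Because $f$ is locally Lipschitz and second-order directionally differentiable at $x^*$ in direction $d$, Proposition \ref{prop lipschitz--Hadamard directional differentiable} ensures that the expansion holds in the Hadamard sense, i.e.
\[
f(x^k)=f(x^*)+t_k f'(x^*;d)+\tfrac{1}{2}t_k^2 f''(x^*;d,w^k)+o(t_k^2).
\]
Using $f'(x^*;d)=0$, dividing by $\tfrac{1}{2}t_k^2$, and taking $k\to\infty$, the Lipschitz continuity of $f''(x^*;d,\cdot)$ (again from Proposition \ref{prop lipschitz--Hadamard directional differentiable}) lets us pass to the limit $f''(x^*;d,w^k)\to f''(x^*;d,w)$, and the inequality $f(x^k)\ge f(x^*)$ delivers $f''(x^*;d,w)\ge 0$.

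For part (ii), take any $d\in\mathcal{C}(x^*)$, so in particular $G'(x^*;d)\in\mathcal{T}_K(G(x^*))$ and $f'(x^*;d)\le 0$; under MSCQ, Proposition \ref{prop-tangent cone=linearized} gives $d\in\mathcal{T}_{\Psi}(x^*)$. For any $w\in\mathbb{R}^n$ satisfying \eqref{G-second-order tangent set}, Proposition \ref{prop-second-order tangent cone=linearized} identifies the right-hand side of \eqref{second-order tangent set=} with $\mathcal{T}^2_{\Psi}(x^*;d)$, so $w\in\mathcal{T}^2_{\Psi}(x^*;d)$. Applying part (i) then yields $f''(x^*;d,w)\ge 0$.

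I do not foresee a genuine obstacle: the only delicate point is justifying the limit $f''(x^*;d,w^k)\to f''(x^*;d,w)$ along the specific approximating sequence produced by the second-order tangent set, but this is exactly what the Hadamard-type second-order directional differentiability and the Lipschitz estimate of Proposition \ref{prop lipschitz--Hadamard directional differentiable} were set up to supply. The remaining step for (ii) is purely a translation via Proposition \ref{prop-second-order tangent cone=linearized}.
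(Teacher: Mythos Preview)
Your proposal is correct and follows essentially the same approach as the paper's proof. The only cosmetic difference is that the paper applies the Hadamard-sense limit directly to obtain $\lim_{k\to\infty}\frac{f(x^k)-f(x^*)-t_kf'(x^*;d)}{\tfrac{1}{2}t_k^2}=f''(x^*;d,w)$ in one step, whereas you take the equivalent but slightly longer route of first writing the expansion with $f''(x^*;d,w^k)$ and then invoking the Lipschitz property of $f''(x^*;d,\cdot)$ to pass to the limit.
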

\begin{proof}
  (i) Consider $d\in \mathcal{T}_{\Psi}(x^*)$ with $f'(x^*;d)\le 0$ and $w\in \mathcal{T}_{\Psi}^2(x^*,d)$,
    there exist sequences $t_k\downarrow 0$ and $w^k\to w$ such that $x^*+t_k d+\frac{1}{2}t_k^2w^k\in \Psi$.
From  Theorem \ref{thm
  FONC for nonsmooth}, we can obtain $f'(x^*;d)=0$.
   Since $f$ is locally Lipschitz continuous and second-order
  directionally differentiable at $x^*$, we have
  \begin{align}
 & 0\le \lim_{k\to \infty}\frac{f(x^*+t_k
 d+\frac{1}{2}t_k^2w^k)-f(x^*)-t_kf'(x^*;d)}{\frac{1}{2}t_k^2}=f''(x^*;d,w)\notag
  \end{align}

  (ii) By Propositions \ref{prop-tangent cone=linearized} and \ref{prop-second-order tangent cone=linearized}, we
  have that if MSCQ for $\Psi$ holds at $x^*$, then (\ref{tangent cone}) and (\ref{second-order tangent set=})
  hold. Combining these with assertion (i), we obtain the conclusion.
\end{proof}

Based on the second-order  gph-regularity of the constraint mapping, we show the outer second-order regularity of the feasible region of the nonsmooth problem (P) under the MSCQ, which is an extension of \cite[Proposition 3.88]{Bonnans}.

\begin{proposition}\label{prop G^-1(K) second order regular}
  Let $K$ be a closed subset of $\mathbb{R}^m$ and $G:\mathbb{R}^n\to \mathbb{R}^m$ be  
   second-order gph-regular at $x^*$. Suppose
  that MSCQ for $\Psi$ holds at $x^*$. If $K$ is outer second-order regular at $G(x^*)$ in direction
  $G'(x^*;d)$, then the set $\Psi$ is outer second-order regular at $x^*$ in direction $d$. Furthermore,
  if $K$ is outer second-order regular at $G(x^*)$, then the set $\Psi$ is outer second-order regular at $x^*$.
\end{proposition}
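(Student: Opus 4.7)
The plan is to follow the spirit of the classical argument of \cite[Proposition 3.88]{Bonnans} while substituting two ingredients for the nonsmooth setting: the $C^2$ Taylor expansion is replaced by the equality-type parabolic expansion delivered by second-order gph-regularity, and Robinson's constraint qualification is replaced by MSCQ as the device that lifts parabolic paths from $K$ into $\Psi$. Fix a direction $d$ and a sequence $x^k=x^*+t_k d+\frac{1}{2} t_k^2 w^k\in\Psi$ with $t_k\downarrow 0$ and $t_k w^k\to 0$; the task is to show $d(w^k,\mathcal{T}^2_\Psi(x^*;d))\to 0$.

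First I would apply second-order gph-regularity to rewrite $G(x^k)=G(x^*)+t_k G'(x^*;d)+\frac{1}{2} t_k^2 v^k$, where $v^k=G''(x^*;d,w^k)+o(1)$. The Lipschitz continuity of $G''(x^*;d,\cdot)$ from Proposition~\ref{prop lipschitz--Hadamard directional differentiable}, combined with Proposition~\ref{prop tg''(x^*;d,w(t)) to 0}, forces $t_k v^k\to 0$. Since $G(x^k)\in K$, the outer second-order regularity of $K$ at $G(x^*)$ in direction $G'(x^*;d)$ produces $u^k\in\mathcal{T}^2_K(G(x^*);G'(x^*;d))$ with $\|v^k-u^k\|\to 0$, and therefore $\varepsilon_k:=\|G''(x^*;d,w^k)-u^k\|\to 0$.

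The core step is to lift $u^k$ back into $\Psi$ using MSCQ. By definition of $u^k\in\mathcal{T}^2_K(G(x^*);G'(x^*;d))$ there exist $\tau^k_j\downarrow 0$ and $u^k_j\to u^k$ as $j\to\infty$ with $y^k_j:=G(x^*)+\tau^k_j G'(x^*;d)+\frac{1}{2}(\tau^k_j)^2 u^k_j\in K$. Setting $x^k_j:=x^*+\tau^k_j d+\frac{1}{2}(\tau^k_j)^2 w^k$, gph-regularity yields
\[
\|G(x^k_j)-y^k_j\|\le \tfrac{1}{2}(\tau^k_j)^2\,\|G''(x^*;d,w^k)-u^k_j\|+o((\tau^k_j)^2)\le \tfrac{1}{2}(\tau^k_j)^2(\varepsilon_k+o_j(1)),
\]
so $d(G(x^k_j),K)\le \frac{1}{2}(\tau^k_j)^2(\varepsilon_k+o_j(1))$. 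Because $x^k_j\to x^*$ as $j\to\infty$, MSCQ furnishes $z^k_j\in\Psi$ with $\|x^k_j-z^k_j\|\le \kappa(\tau^k_j)^2(\varepsilon_k+o_j(1))$, equivalently $z^k_j=x^*+\tau^k_j d+\frac{1}{2}(\tau^k_j)^2 \tilde w^k_j$ with $\|\tilde w^k_j-w^k\|\le 2\kappa(\varepsilon_k+o_j(1))$. For each fixed $k$ the sequence $\{\tilde w^k_j\}_j$ is bounded, so it admits a subsequential limit $\tilde w^k$; by construction of $z^k_j$ via $\tau^k_j\downarrow 0$ we have $\tilde w^k\in\mathcal{T}^2_\Psi(x^*;d)$ and $\|\tilde w^k-w^k\|\le 2\kappa\varepsilon_k$. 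Hence $d(w^k,\mathcal{T}^2_\Psi(x^*;d))\le 2\kappa\varepsilon_k\to 0$, which is the desired directional outer second-order regularity; the unconditional ``furthermore'' statement then follows by applying the directional result to every $d\in\mathcal{T}_\Psi(x^*)$.

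The main obstacle I anticipate is the absence of a surjective derivative: the Bonnans--Shapiro proof uses Robinson's CQ to pull $u^k\in\mathcal{T}^2_K$ back through $\mathcal{J}G(x^*)$ to an exact preimage, and no such surjectivity is available when $G$ is only Lipschitz. The substitute exploited above is the pairing of the \emph{exact} (two-sided) second-order expansion from gph-regularity, which quantifies $\|G(x^k_j)-y^k_j\|$ to the order $(\tau^k_j)^2(\varepsilon_k+o_j(1))$, with the quantitative error bound of MSCQ, which converts that gap into an $O(\varepsilon_k)$ perturbation of $w^k$ already living in $\mathcal{T}^2_\Psi(x^*;d)$, bypassing any inverse function theorem.
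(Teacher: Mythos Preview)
Your argument is correct and follows essentially the same route as the paper: both expand $G(x^k)$ via second-order gph-regularity to invoke the outer second-order regularity of $K$, then for each fixed $k$ pick an approximating element of $\mathcal{T}^2_K(G(x^*);G'(x^*;d))$, use its defining parabolic sequence together with MSCQ to bound the distance from $x^*+\tau^k_j d+\tfrac{1}{2}(\tau^k_j)^2 w^k$ to $\Psi$, and pass to the limit in $j$ to control $d\bigl(w^k,\mathcal{T}^2_\Psi(x^*;d)\bigr)$. The only cosmetic difference is that the paper packages the last step as a single quantitative inequality $d(w^k,\mathcal{T}^2_\Psi)\le\kappa\,d(G''(x^*;d,w^k),\mathcal{T}^2_K)$ and appeals to \cite[Corollary~4.7]{Rockafellar}, whereas you extract the limit element $\tilde w^k\in\mathcal{T}^2_\Psi(x^*;d)$ by hand.
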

\begin{proof}
  Let $x^k:=x^*+t_k d+\frac{1}{2}t_k^2w^k\in \Psi$ be a sequence such that $t_k\downarrow 0$ and $t_k w^k\to 0$.
 If there exists a constant $\kappa>0$ such that for all sufficiently large $k$,   the  following condition holds:
\begin{align}\label{d(w^k,T^2)}
  d\left(w^k, \mathcal{T}^2_{\Psi}(x^*,d)\right)\le \kappa d\left(G''(x^*;d,w^k),
  \mathcal{T}_{K}^2\left(G(x^*),G'(x^*;d)\right)\right),
\end{align}
 then the first part of the proposition holds automatically.
Indeed,  since $G$ is second-order gph-regular at  $x^*$, then
\begin{align}
   & G(x^k)=G(x^*)+t_k G'(x^*;d)+\frac{1}{2}t_k^2G''(x^*;d,w^k)+o(t_k^2)\in K.\notag
\end{align}
Therefore the outer second-order regularity of $K$ implies
\begin{align*}
   & d\left(G''(x^*;d,w^k), \mathcal{T}_{K}^2\left(G(x^*),G'(x^*;d)\right)\right)\to 0
\end{align*}
and thus $ d\left(w^k, \mathcal{T}^2_{\Psi}(x^*,d)\right)\to 0$. Consequently, $\Psi$ is outer
second-order regular at $x^*$ in direction $d$.

We only need to verify (\ref{d(w^k,T^2)}).
  Assume there exists $v\in \mathcal{T}_{K}^2\left(G(x^*),G'(x^*;d)\right)$ such that
 $\|G''(x^*;d,w^k)-v\|=d\left(G''(x^*;d,w^k),
  \mathcal{T}_{K}^2\left(G(x^*),G'(x^*;d)\right)\right)$ and for
some sequences $s_l\downarrow 0$ and $ v^l\to v$, it follows that
$G(x^*)+s_l G'(x^*;d)+\frac{1}{2}s_l^2v^l\in K$.
  Then from MSCQ and the second-order  directional differentiability of $G$, we have that
\begin{align}\label{d(G''(x^*;d,w^k),T^2)}
   &\frac{1}{\kappa}\liminf_{l\to \infty}\frac{2}{s_l^2}d\left(x^*+s_ld+\frac{1}{2}s_l^2w^k,\Psi\right)\notag\\
   \le~~ &\liminf_{l\to \infty}\frac{2}{s_l^2}d\left(G(x^*+s_ld+\frac{1}{2}s_l^2w^k),K\right)\notag\\
    \le~~ &\liminf_{l\to \infty}\left[\frac{2}{s_l^2}d\left(G(x^*)+s_lG'(x^*;d)+\frac{1}{2}s_l^2G''(x^*;d,w^k),K\right)+o(1) \right]\notag\\
   \le~~ &\liminf_{l\to \infty} \frac{2}{s_l^2}\left\|G(x^*)+s_lG'(x^*;d)+\frac{1}{2}s_l^2G''(x^*;d,w^k)
-\left[G(x^*)+s_lG'(x^*;d)+\frac{1}{2}s_l^2v^l\right]\right\| \notag\\
  = ~~ &\liminf_{l\to \infty}\|G''(x^*;d,w^k)-v^l\|\notag\\
  =~~ & \|G''(x^*;d,w^k)-v\|\notag\\
		=~~ & d\left(G''(x^*;d,w^k),
		\mathcal{T}_{K}^2\left(G(x^*),G'(x^*;d)\right)\right).
\end{align}

By the definition of outer second-order tangent set, we have
 $$\limsup_{l\to \infty}\frac{\Psi-x^*-s_ld}{\frac{1}{2}s_l^2}\subseteq\mathcal{T}^2_{\Psi}(x^*,d),$$
 thus by \cite[Corollary 4.7]{Rockafellar}, it follows that
 \begin{align}
  d\left(w^k,\mathcal{T}^2_{\Psi}(x^*,d)\right)&\le \liminf_{l\to
  \infty}d\left(w^k,\frac{\Psi-x^*-s_ld}{\frac{1}{2}s_l^2}\right)\notag\\
  &= \liminf_{l\to \infty}\frac{2}{s_l^2}d\left(x^*+s_ld+\frac{1}{2}s_l^2w^k,\Psi\right)\notag\\
  &\le \kappa d\left(G''(x^*;d,w^k), \mathcal{T}_{K}^2\left(G(x^*),G'(x^*;d)\right)\right),\notag
 \end{align}
 where the last inequality holds by (\ref{d(G''(x^*;d,w^k),T^2)}).

  Furthermore,
  if K is outer second-order regular at $G(x^*)$ for any direction from $\mathbb{R}^m$, then the set $\Psi$ is
  outer second-order regular at $x^*$ for any direction from $\mathbb{R}^n$.
 We complete the proof.
\end{proof}

 In the rest of this section, we construct the second-order sufficient optimality conditions for the nonsmooth problem.

\begin{theorem}[Second-order sufficient optimality conditions]\label{thm SOSC for
nonsmooth}
Let $x^*$ be a feasible point of $(P)$. Suppose that $f$ is  
    second-order
 epi-regular at $x^*$ and the set $\Psi$ is outer second-order regular at $x^*$ in every direction  $d\in T_{\Psi}(x^*)$.
 Assume that  we have
 \begin{align}\label{thm f' ge 0}
   & f'(x^*;d)\ge 0, \quad \forall d\in T_{\Psi}(x^*),
 \end{align}
 and  the optimization problem
   \begin{align}\label{thm p f''}
    \min_{w}~ f''(x^*;d,w) ~~~{\rm s.t.}~w\in \mathcal{T}_{\Psi}^2(x^*,d),
   \end{align}
possesses a positive optimal objective value for every $d\in C_{\Psi}(x^*):=\left\{d\in T_{\Psi}(x^*): f'(x^*;d)\le
   0\right\}  \backslash \{0\}$. Then the
second-order growth condition holds for $(P)$ at $x^*$.
\end{theorem}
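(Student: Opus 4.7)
I would argue by contradiction, assuming the second-order growth condition fails at $x^*$: there exists a sequence $\{x^k\}\subset \Psi\setminus\{x^*\}$ with $x^k\to x^*$ and $f(x^k)-f(x^*)\le o(t_k^2)$, where $t_k:=\|x^k-x^*\|$. After passing to a subsequence, the unit vectors $d^k:=(x^k-x^*)/t_k$ converge to some $d$ with $\|d\|=1$, which lies in $\mathcal{T}_\Psi(x^*)$. Local Lipschitz continuity of $f$ together with its directional differentiability gives, via \cite[Proposition 2.49]{Bonnans}, the Hadamard expansion $f(x^k)-f(x^*)=t_k f'(x^*;d)+o(t_k)$, so the contradiction hypothesis forces $f'(x^*;d)\le 0$; combined with (\ref{thm f' ge 0}) this yields $f'(x^*;d)=0$, and hence $d\in C_\Psi(x^*)\setminus\{0\}$, so that the assumption on (\ref{thm p f''}) is available at this direction.

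Next, I would encode the deviation from the linear path by setting $w^k:=2(d^k-d)/t_k$, so that $x^k=x^*+t_k d+\tfrac{1}{2}t_k^2 w^k$ and $t_k w^k=2(d^k-d)\to 0$. The outer second-order regularity of $\Psi$ at $x^*$ in direction $d$ yields $d(w^k,\mathcal{T}_\Psi^2(x^*;d))\to 0$, so I can select $\tilde w^k\in\mathcal{T}_\Psi^2(x^*;d)$ with $\|w^k-\tilde w^k\|\to 0$. Because $f$ is second-order epi-regular at $x^*$ in direction $d$, applying the defining inequality to a path $w(\cdot)$ interpolating the accelerations $\{w^k\}$ (piecewise constant on $(t_{k+1},t_k]$, for which $tw(t)\to 0$ is immediate since $|tw(t)|\le|t_k w^k|\to 0$) gives
\begin{equation*}
f(x^k)\ge f(x^*)+t_k f'(x^*;d)+\tfrac{1}{2}t_k^2 f''(x^*;d,w^k)+o(t_k^2)=f(x^*)+\tfrac{1}{2}t_k^2 f''(x^*;d,w^k)+o(t_k^2).
\end{equation*}

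Finally, I would invoke the positivity of the minimum in (\ref{thm p f''}): letting $\alpha>0$ denote its optimal value at this $d$, one has $f''(x^*;d,\tilde w^k)\ge \alpha$ for every $k$. By Proposition \ref{prop lipschitz--Hadamard directional differentiable}, $f''(x^*;d,\cdot)$ is Lipschitz on $\mathbb{R}^n$ with the same modulus $L$ as $f$, so $f''(x^*;d,w^k)\ge \alpha-L\|w^k-\tilde w^k\|=\alpha-o(1)$. Substituting into the displayed lower bound produces $f(x^k)-f(x^*)\ge \tfrac{\alpha}{4}t_k^2$ for all sufficiently large $k$, contradicting $f(x^k)-f(x^*)\le o(t_k^2)$. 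The main delicate point is bridging the definition of second-order epi-regularity, which is formulated for continuous paths $w:\mathbb{R}_+\to\mathbb{R}^n$, to the discrete accelerations $\{w^k\}$ that come out of the contradiction sequence; this is handled by the piecewise-constant path construction above, after which everything else is a direct assembly of the regularity tools developed in Section \ref{section 2}.
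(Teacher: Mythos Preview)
Your proposal is correct and follows essentially the same contradiction argument as the paper: extract a unit tangent direction $d$, show $f'(x^*;d)=0$ so $d\in C_\Psi(x^*)\setminus\{0\}$, define accelerations $w^k$ with $t_kw^k\to0$, invoke outer second-order regularity of $\Psi$ to approximate $w^k$ by elements of $\mathcal{T}^2_\Psi(x^*;d)$, use second-order epi-regularity of $f$ for the lower bound, and combine the positivity of the optimal value in (\ref{thm p f''}) with the Lipschitz continuity of $f''(x^*;d,\cdot)$ from Proposition~\ref{prop lipschitz--Hadamard directional differentiable} to reach a contradiction. Your explicit piecewise-constant path construction to bridge from the path-based Definition~\ref{def Second-order epi-regular} to the discrete sequence $\{w^k\}$ is a point of additional care that the paper leaves implicit, but otherwise the arguments coincide.
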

\begin{proof}
 Assume to the contrary that the second-order growth condition does not hold for $(P)$ at $x^*$.
  Then there exists a sequence $x^k\in \Psi$, $x^k\neq x^*$, converging to $x^*$ such that
\begin{align}\label{f contradiction}
  & f(x^k)-f(x^*)\le o(\|x^k-x^*\|^2).
\end{align}
  Select a subsequence if necessary, we define $t_k=\|x^k-x^*\|$, $t_k\downarrow 0$ and $t_k>0$. Let
$d^k=\frac{x^k-x^*}{t_k}$, then $x^k=x^*+t_k d^k$ and $\|d^k\|=1$. Without loss of generality, we have that $d^k\to d\in T_{\Psi}(x^*) $
and $\|d\|=1$.

Since $f$ is locally Lipschitz continuous and directionally differentiable at $x^*$,
then
\begin{align*}
   & f'(x^*;d)=\lim_{k\to \infty}\frac{f(x^*+t_k d^k)-f(x^*)}{t_k}
   \le \lim_{k\to \infty}\frac{o(\|x^k-x^*\|^2)}{t_k}=0,
\end{align*}
thus $d\in \mathcal{C}_{\Psi}(x^*)$ and by (\ref{thm f' ge 0}), we have $f'(x^*;d)=0$.
Let $w^k=\frac{x^k-x^*-t_k d}{\frac{1}{2}t_k^2}$, then $x^k=x^*+t_k d+\frac{1}{2}t_k^2w^k\in \Psi$ and $t_kw^k\to
0$. Since $\Psi$ is outer second-order regular
at $x^*$ in  direction $d\in T_{\Psi}(x^*)$, we have
 \begin{align}\label{d(w^k,T^2) to 0}
   &d(w^k,\mathcal{T}^2_{\Psi}(x^*,d))\to 0.
 \end{align}
  Since $f$ is second-order  epi-regular at $x^*$, then
 $$f(x^*+t_k d+\frac{1}{2}t_k^2w^k) \ge f(x^*)+t_k f'(x^*;d)+\frac{1}{2}t_k^2 f''(x^*;d,w^k)+o(t_k^2).$$
 By
 (\ref{f contradiction}) and the fact $f'(x^*;d)=0$, we have
 \begin{align}\label{limsup f''(x^*;d,w^k)}
   & \limsup_{k\to \infty}f''(x^*;d,w^k)\le 0.
 \end{align}
 Notice that the optimal value of Problem (\ref{thm p f''}) is positive,
 there exists some constant $\beta>0$ such that
 \begin{align*}
    & \forall w\in \mathcal{T}^2_{\Psi}(x^*,d),~~~f''(x^*;d,w)\ge \beta >0.
 \end{align*}
It follows from Proposition \ref{prop lipschitz--Hadamard directional differentiable} that $f''(x^*;d,\cdot)$ is
Lipschitz continuous, then there exists some constant $L_f>0$ such that
\begin{align}\label{Lf}
   & |f''(x^*;d,w^k)-f''(x^*;d,w)|\le L_{f}\|w^k-w\|.
\end{align}
By (\ref{d(w^k,T^2) to 0}), there exist $v^k\in \mathcal{T}^2_{\Psi}(x^*,d)$ such that $\|w^k-v^k\|\le
\frac{\beta}{2L_f}$ for all $k$ large enough.
Formula (\ref{Lf}) yields
\begin{align*}
   & f''(x^*;d,w^k)\ge f''(x^*;d,v^k)-L_f\|w^k-v^k\|\ge \beta-\frac{\beta}{2}=\frac{\beta}{2}>0,
\end{align*}
 which contradicts with (\ref{limsup f''(x^*;d,w^k)}). We complete the proof.
\end{proof}
\begin{corollary}\label{cor SOSC for
nonsmooth}
Let $x^*$ be a feasible point of $(P)$ and $K$ be a closed subset of $\mathbb{R}^m$. Suppose that $f$ is
    second-order
 epi-regular  and $G$ is
   second-order gph-regular at $x^*$.
   Suppose
  that MSCQ for $\Psi$ holds at $x^*$ and $K$ is outer second-order regular at $G(x^*)$ in every direction
  $G'(x^*;d)\in \mathcal{T}_{K}(G(x^*))$.
 Assume that for every direction $d$ satisfying $G'(x^*;d)\in \mathcal{T}_{K}(G(x^*))$, we have
 \begin{align}
   & f'(x^*;d)\ge 0,\notag
 \end{align} and  the optimization problem
   \begin{align}
    \min_{w}~ f''(x^*;d,w) ~~~{\rm s.t.}~G''(x^*;d,w)\in \mathcal{T}_{K}^2(G(x^*),G'(x^*;d)),\notag
   \end{align}
possesses a positive optimal objective value for every $d\in \mathcal{C}(x^*)\backslash \{0\}$. Then the
second-order growth condition holds for $(P)$ at $x^*$.
 \end{corollary}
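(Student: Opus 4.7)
The plan is to deduce this corollary directly from Theorem \ref{thm SOSC for nonsmooth} by translating each of the three hypotheses of that theorem---outer second-order regularity of $\Psi$, nonnegativity of $f'(x^*;\cdot)$ on $\mathcal{T}_{\Psi}(x^*)$, and positivity of the minimum of $f''(x^*;d,\cdot)$ over $\mathcal{T}_{\Psi}^{2}(x^*,d)$---into corresponding conditions expressed via $G$ and $K$, each of which is either assumed in the corollary or supplied by a previous result in this section.

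First I would verify the outer second-order regularity of $\Psi$. Since $G$ is second-order gph-regular at $x^*$, MSCQ for $\Psi$ holds at $x^*$, and $K$ is outer second-order regular at $G(x^*)$ in every direction $G'(x^*;d)\in\mathcal{T}_K(G(x^*))$, Proposition \ref{prop G^-1(K) second order regular} immediately yields that $\Psi$ is outer second-order regular at $x^*$ in every direction $d\in\mathcal{T}_{\Psi}(x^*)$, since Proposition \ref{prop-tangent cone=linearized} (MSCQ) guarantees that every such $d$ satisfies $G'(x^*;d)\in\mathcal{T}_K(G(x^*))$.

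Next I would translate the first-order and second-order assumptions. Proposition \ref{prop-tangent cone=linearized} gives $\mathcal{T}_{\Psi}(x^*)=\{d:G'(x^*;d)\in\mathcal{T}_K(G(x^*))\}$, so the hypothesis $f'(x^*;d)\ge 0$ for all $d$ with $G'(x^*;d)\in\mathcal{T}_K(G(x^*))$ becomes precisely $f'(x^*;d)\ge 0$ for all $d\in\mathcal{T}_{\Psi}(x^*)$, and the critical cone $C_{\Psi}(x^*)$ of Theorem \ref{thm SOSC for nonsmooth} coincides with $\mathcal{C}(x^*)$. For the second-order part, Proposition \ref{prop-second-order tangent cone=linearized} under MSCQ gives
\begin{equation*}
\mathcal{T}_{\Psi}^{2}(x^*,d)=\bigl\{w\in\mathbb{R}^n: G''(x^*;d,w)\in\mathcal{T}_K^{2}\bigl(G(x^*),G'(x^*;d)\bigr)\bigr\}
\end{equation*}
for every $d\in\mathcal{T}_{\Psi}(x^*)$, so the constraint sets of the two parametric minimization problems coincide, and therefore their optimal values agree. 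Thus the assumed positivity of $\min_w f''(x^*;d,w)$ subject to $G''(x^*;d,w)\in\mathcal{T}_K^{2}(G(x^*),G'(x^*;d))$ for $d\in\mathcal{C}(x^*)\setminus\{0\}$ transfers exactly to the hypothesis of Theorem \ref{thm SOSC for nonsmooth}.

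Having verified all the hypotheses of Theorem \ref{thm SOSC for nonsmooth}, its conclusion---the second-order growth condition for $(P)$ at $x^*$---follows. No step here should present a real obstacle, since the corollary is essentially a clean composition of Theorem \ref{thm SOSC for nonsmooth} with the two chain rules Propositions \ref{prop-tangent cone=linearized} and \ref{prop-second-order tangent cone=linearized} and the regularity transfer Proposition \ref{prop G^-1(K) second order regular}; the only care required is to check that the assumed outer second-order regularity of $K$ and second-order gph-regularity of $G$ are invoked in the precise form needed by Proposition \ref{prop G^-1(K) second order regular}, which they are.
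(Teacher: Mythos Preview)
Your proposal is correct and follows exactly the same approach as the paper: invoke Propositions \ref{prop-tangent cone=linearized} and \ref{prop-second-order tangent cone=linearized} to rewrite $\mathcal{T}_{\Psi}(x^*)$ and $\mathcal{T}_{\Psi}^2(x^*,d)$ in terms of $G$ and $K$, use Proposition \ref{prop G^-1(K) second order regular} to obtain outer second-order regularity of $\Psi$, and then apply Theorem \ref{thm SOSC for nonsmooth}. Your write-up is slightly more explicit than the paper's, but the logic is identical.
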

\begin{proof}
It follows from Propositions \ref{prop-tangent cone=linearized} and \ref{prop-second-order tangent cone=linearized} that
\begin{align*}
   &\mathcal{T}_{\Psi}(x^*)=\{d\in \mathbb{R}^n:G'(x^*;d)\in \mathcal{T}_{K}(G(x^*))\},\\
  &\mathcal{T}^2_{\Psi}(x^*,d)=\left\{w\in \mathbb{R}^n:G''(x^*;d,w)\in
   \mathcal{T}_{K}^2(G(x^*),G'(x^*;d))\right\}.
  \end{align*}
Moreover, it follows from Proposition \ref{prop G^-1(K) second order regular} that $\Psi$ is outer second-order regular
at $x^*$ in every direction $d\in T_{\Psi}(x^*)$. Hence the proof is completed by Theorem \ref{thm SOSC for nonsmooth}.
\end{proof}

Notice that in \cite[Theorem 4.1]{Ruckmann}, the second-order sufficient optimality condition only holds for
inequality constrained optimization problem. The above theorem extends to the more general case by the concept of second-order gph-regularity.

\section{Optimality conditions for the bi-level problem}\label{section 4}
In this section, we analyze the first-  and  second-order optimality conditions for the bi-local solution of the bilevel programming problem:
\begin{eqnarray*}
(BP)~~~~\min && F(x,y)\\
{\rm s.t.}&& H(x,y)= 0,\ G(x,y)\le 0,\\
&& y\in S(x),
\end{eqnarray*}
where $ S(x)$ denotes the solution set of the lower-level problem:
\begin{eqnarray*}
(P_x)~~~~\min_y && f(x,y)\\
{\rm s.t.} &&
h(x,y)= 0,\  g(x,y)\le 0,
\end{eqnarray*}
where  $F:\mathbb{R}^n \times \mathbb{R}^m \to \mathbb{R} $, $G :\mathbb{R}^n\times \mathbb{R}^m\to \mathbb{R}^q $, $H :\mathbb{R}^n\times \mathbb{R}^m\to \mathbb{R}^p $,
$f:\mathbb{R}^n \times \mathbb{R}^m \to \mathbb{R} $,  $g :\mathbb{R}^n\times \mathbb{R}^m\to \mathbb{R}^s $, $h :\mathbb{R}^n\times \mathbb{R}^m\to \mathbb{R}^r $.
Assume that $f, g, h$ are
 thrice continuously differentiable  and $F, G, H$ are twice continuously
differentiable around $(x^*,y^*)$.
Note that $(\mu^*,\xi^*)\in \mathbb{R}^r\times\mathbb{R}^s$ is the corresponding Lagrangian multiplier such that
the KKT conditions hold at $y^*$ for  $(P_{x^*})$, i.e.,
 \begin{eqnarray}\label{kkt}
 && \nabla_y \mathcal{L}\left(x^*; y^*, \mu^*,\xi^*\right)=0,\nonumber \\
&& h\left(x^*, y^*\right)=0,\\
&& 0\le \xi^*\bot g(x^*,y^*)\leq 0,\nonumber
\end{eqnarray}
where  $\mathcal{L}(x;y,\mu,\xi):=f(x,y)+\mu^Th(x,y)+\xi^Tg(x,y)$ is the Lagrangian function of $(P_x)$. We use $\Lambda_{x^*}(y^*)$ to denote the
set of all $(\mu^*,\xi^*)$ satisfying (\ref{kkt}) and
define $Y(x):=\{y\in \mathbb{R}^m: h(x,y)=0, g(x,y)\le0\}$ as the feasible set of $(P_x)$.
We give the following assumptions:
\begin{itemize}
 \item[(A1)] The MFCQ holds at $y^*$ for $(P_{x^*})$; namely, the set $\{\nabla_y h_i(x^*, y^*): i\in [r]\}$ is linearly independent and there exists $d_y\in \mathbb{R}^m$ such that $\mathcal{J}_y h(x^*,y^*)d_y=0$ and $\nabla_y g_i(x^*,y^*)d_y< 0$ for all
      $i\in I_{x^*}(y^*):=\{i\in[s]:g_i(x^*,y^*)=0\}.$
  \item[(A2)] The SSOSC holds at every $(y^*, \mu^*,\xi^*)\in \Lambda_{x^*}(y^*)$ for $(P_{x^*})$; namely,
$$
\left\langle\nabla_{y y}^2 \mathcal{L}\left(x^*; y^*, \mu^*,\xi^*\right) d_y, d_y\right\rangle>0, \quad \forall
d_y \in  \text{aff}~ \mathcal{C}_{x^*}(y^*)\backslash\{0\}, $$
where  $\mathcal{C}_{x^*}(y^*)$ is the critical cone of Problem $(P_{x^*})$ at $y^*$,
\begin{align*}
  \mathcal{C}_{x^*}(y^*)=\{d_y :  \mathcal{J}_y h(x^*, y^*)d_y=0;\nabla_y g_i(x^*,y^*)^Td_y\le 0,i\in I_{x^*}(y^*);
 \nabla_y f(x^*,y^*)^Td_y\le 0 \}.
\end{align*}
\item[(A3)] The CRCQ holds at $y^*$ for $(P_{x^*})$; namely, there exists a neighborhood $U$ of $(x^*,y^*)$ such that for any subsets $I$ of $I_{x^*}(y^*)$ and $J$ of $[r]$, the family of gradient vectors $\{\nabla_y h_i(x, y): i\in J\}\cup\{\nabla_y g_i(x, y):i\in I\}$ has the same rank (depending on $I$ and $J$) for all vectors $(x,y)\in U$.
  \item[(A4)] The LICQ 
 holds at $y^*$ for $(P_{x^*})$; namely, the set of vectors
$
\{\nabla_y h_i(x^*, y^*): i\in [r]\}\cup\{\nabla_y g_i(x^*, y^*):i\in
I_{x^*}(y^*)\}
$
is linearly independent.
\end{itemize}

When $(\mu^*,\xi^*)\in \Lambda_{x^*}(y^*)$ is a Lagrangian multiplier of Problem $(P_{x^*})$, then
 the affine space of the critical cone $\mathcal{C}_{x^*}(y^*)$ can be expressed as
\begin{align*}
 \text{aff}~ \mathcal{C}_{x^*}(y^*)=\{d_y\in \mathbb{R}^m: &\mathcal{J}_y h(x^*, y^*)d_y=0;\nabla_y
 g_i(x^*,y^*)^Td_y= 0,i\in I_{x^*}^{+}(y^*) \},
\end{align*}
where $I_{x^*}^{+}(y^*):=\{i\in[s]:g_i(x^*,y^*)=0, \xi_{i}^*>0\}$.

\begin{lemma}\label{lem MFCQ SSOSC}
  Suppose that assumptions \textnormal{(A1)} and \textnormal{(A2)} hold. Then there exists constants
$\delta_1>0$ and $\varepsilon_1>0$, and a  continuous mapping $y(\cdot):\mathbb{B}_{\delta_1}(x^*)\to\mathbb{B}_{\varepsilon_1}(y^*)$ such that $y(x^*)=y^*$ and $y(x)$ is the unique local solution of the lower-level problem $(P_x)$ and is  directionally differentiable for $x\in \mathbb{B}_{\delta_1}(x^*)$. Moreover,  $(x^*,y^*)$ is a bi-local solution of (BP) if and only if  $x^*$ is a local solution of the implicit function reformulation  (SP) as follows:
\begin{equation*}
(SP)~~~~\min_x~~~~ F(x,y(x))~~~~{\rm s.t.}~~~~ H(x,y(x))= 0,~ G(x,y(x))\le 0,
\end{equation*}
which substitutes  the unique local solution derived from the lower-level problem into the corresponding
upper-level problem.
\end{lemma}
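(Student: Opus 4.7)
The plan is to combine classical parametric nonlinear programming theory with directional sensitivity analysis and then translate the bi-local definition into the reformulation (SP). First, under (A1) the multiplier set $\Lambda_{x^*}(y^*)$ is nonempty and compact, and together with (A2) this implies that $y^*$ is a strict local minimizer of $(P_{x^*})$. By the classical local uniqueness results of Kojima and Robinson for $C^2$ parametric nonlinear programs, there exist neighborhoods $\boldsymbol{B}_{\delta_1}(x^*)$ and $\boldsymbol{B}_{\varepsilon_1}(y^*)$ together with a continuous mapping $y:\boldsymbol{B}_{\delta_1}(x^*)\to \boldsymbol{B}_{\varepsilon_1}(y^*)$ with $y(x^*)=y^*$, such that $y(x)$ is the unique local minimizer of $(P_x)$ inside $\boldsymbol{B}_{\varepsilon_1}(y^*)$ for every $x\in \boldsymbol{B}_{\delta_1}(x^*)$.

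Next, I would obtain directional differentiability of $y(\cdot)$ at $x^*$ from the sensitivity framework of Shapiro and Bonnans--Shapiro for parametric programs under MFCQ and SSOSC. Under (A1) and (A2), that theory characterizes $y'(x^*;d_x)$ as the unique optimal solution of an auxiliary quadratic program whose data come from the linearization of the KKT system (\ref{kkt}) at a multiplier $(\mu^*,\xi^*)\in\Lambda_{x^*}(y^*)$ selected by the directional perturbation $d_x$, with strong convexity ensured by (A2) on the affine hull of the critical cone $\mathcal{C}_{x^*}(y^*)$. Shrinking the neighborhoods if necessary, this yields directional differentiability of $y(\cdot)$ on $\boldsymbol{B}_{\delta_1}(x^*)$.

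For the equivalence, the key observation is that uniqueness of $y(x)$ inside $\boldsymbol{B}_{\varepsilon_1}(y^*)$ forces the localized lower-level feasibility condition $y\in S(x)\cap\boldsymbol{B}_{\varepsilon_1}(y^*)$, which is the one appearing in the definition of a bi-local solution, to collapse to the identity $y=y(x)$. Substituting this identity into (BP) reproduces (SP) exactly on the neighborhoods constructed above. If $x^*$ is a local minimizer of (SP), then every bi-local feasible pair $(x,y)$ near $(x^*,y^*)$ satisfies $F(x,y)=F(x,y(x))\ge F(x^*,y(x^*))=F(x^*,y^*)$, so $(x^*,y^*)$ is bi-local; conversely, if $(x^*,y^*)$ is bi-local, then for every $x\in \boldsymbol{B}_{\delta_1}(x^*)$ that is feasible for (SP) the pair $(x,y(x))$ is automatically bi-local feasible for (BP), so $F(x,y(x))\ge F(x^*,y^*)$, which is the local minimality of $x^*$ for (SP).

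The main obstacle is the directional differentiability step, because MFCQ combined with SSOSC does not ensure uniqueness of the multipliers, so the standard implicit function argument that would apply under LICQ plus SSOSC is not available here. I would overcome this by invoking Shapiro's directional sensitivity theorem, which is tailored to exactly this setting and reduces the computation of $y'(x^*;\cdot)$ to a QP parametrised by $\Lambda_{x^*}(y^*)$. The remaining ingredients---existence, uniqueness and continuity of $y(\cdot)$, and the (BP)$\Leftrightarrow$(SP) translation---are then routine once the local uniqueness of the lower-level minimizer has been established.
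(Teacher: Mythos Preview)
Your proposal is correct and follows essentially the same route as the paper: invoke parametric NLP sensitivity theory under MFCQ and SSOSC to obtain the continuous, directionally differentiable local solution map $y(\cdot)$ (the paper cites Ralph--Dempe \cite{Ralph and Dempe 95} for this in one stroke, while you split it into Kojima--Robinson plus Shapiro's directional sensitivity), and then collapse the bi-local constraint to $y=y(x)$ to get the (BP)$\Leftrightarrow$(SP) equivalence. The only point the paper makes more explicit is that the uniform second-order quadratic growth condition (via \cite[Proposition~5.37]{Bonnans}) is what guarantees $y(x)$ is the unique minimizer over the \emph{fixed} ball $\boldsymbol{B}_{\varepsilon_1}(y^*)$---you implicitly use this when you say ``unique local minimizer of $(P_x)$ inside $\boldsymbol{B}_{\varepsilon_1}(y^*)$'', so there is no gap.
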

\begin{proof}
 The first part of the lemma follows from \cite[Theorem 1]{Ralph and Dempe 95}.
According to \cite[Proposition 5.37]{Bonnans}, under the MFCQ and SSOSC  for the lower-level problem $(P_{x^*})$, the uniform second-order quadratic growth condition holds at $y^*$.
The remainder of the proof proceeds similarly to the argument in \cite[Theorem 3.5]{Liu 25}.
\end{proof}

By replacing MFCQ with LICQ in above Lemma and applying \cite[Remark 3.6]{Liu 25}, we obtain that
$(x^*,y^*)$  serves as  a bi-local solution for (BP), which is equivalent to
  $(x^*,y^*,\mu^*,\xi^*)$ being a local solution of the first-order reformulation (FP) as follows:
 \begin{eqnarray*}
(FP)~~~~~~\min_{x,y,\mu,\xi} && F(x,y)\\
{\rm s.t.}&& H(x,y)= 0,G(x,y)\le 0,\\
&&\nabla_{y} \mathcal{L}(x;y,\mu,\xi)=0,\\
&&h(x,y)=0,\\
&&g(x,y )-\Pi_{\mathbb{R}^s_{-}}(g(x,y)+\xi)=0,
\end{eqnarray*}
which is by substituting the lower-level solution set constraint for the KKT conditions
of the lower-level problem.

\begin{lemma}\label{lem MFCQ SSOSC CRCQ}
  Suppose that assumptions \textnormal{(A1)}-\textnormal{(A3)} hold. Then there exist positive constants
$\delta_2<\delta_1$ and $\varepsilon_2<\varepsilon_1$, and a locally Lipschitz continuous mapping $y(\cdot):\mathbb{B}_{\delta_2}(x^*) \to \mathbb{B}_{\varepsilon_2}(y^*)$ such that $y(x^*)=y^*$ and $y(x)$ is
the unique local solution of the lower-level problem $(P_x)$ and   is second-order directionally differentiable and  second-order gph-regular for $x \in \mathbb{B}_{\delta_2}(x^*)$.
  Moreover,  $(x^*,y^*)$ is a bi-local solution of (BP) if and only if  $x^*$ is a local solution of (SP).
\end{lemma}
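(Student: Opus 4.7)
The plan is to strengthen Lemma 4.1 by exploiting assumption (A3) to upgrade the unique local solution $y(\cdot)$ from a merely continuous directionally differentiable selection to a piecewise twice continuously differentiable ($PC^2$) mapping, and then invoke Proposition \ref{prop PC2} to conclude second-order directional differentiability and second-order gph-regularity. The equivalence between bi-local solutions of (BP) and local solutions of (SP) will then follow verbatim from the argument already used in Lemma 4.1, since that argument relies only on the uniqueness and continuity of the local selection $y(\cdot)$.

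First I would invoke the Ralph--Dempe result \cite{Ralph and Dempe 95} for parametric nonlinear programs: under (A1), (A2) and (A3) the uniform quadratic growth condition holds locally, the unique local solution mapping $y(\cdot)$ exists on a neighbourhood $\mathbb{B}_{\delta_2}(x^*) \subset \mathbb{B}_{\delta_1}(x^*)$, takes values in $\mathbb{B}_{\varepsilon_2}(y^*) \subset \mathbb{B}_{\varepsilon_1}(y^*)$, satisfies $y(x^*) = y^*$, and is locally Lipschitz continuous. This gives the first part of the lemma.

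The decisive structural step is to show that $y(\cdot)$ is $PC^2$. For each pair $(J,I)$ with $J \subseteq [r]$ and $I \subseteq I_{x^*}(y^*)$ such that the gradient family $\{\nabla_y h_j(x^*,y^*)\}_{j\in J} \cup \{\nabla_y g_i(x^*,y^*)\}_{i\in I}$ is linearly independent and has rank equal to that of the full family indexed by $[r]$ and $I_{x^*}(y^*)$, the reduced KKT system
\begin{align*}
\nabla_y f(x,y) + \textstyle\sum_{j\in J}\mu_j\nabla_y h_j(x,y) + \sum_{i\in I}\xi_i\nabla_y g_i(x,y) &= 0, \\
h_j(x,y) &= 0,\quad j\in J, \\
g_i(x,y) &= 0,\quad i\in I,
\end{align*}
has an invertible Jacobian with respect to $(y,\mu_J,\xi_I)$ at $(x^*,y^*,\mu^*_J,\xi^*_I)$, thanks to the SSOSC (A2) restricted to $\mathrm{aff}\,\mathcal{C}_{x^*}(y^*)$. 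The classical implicit function theorem, applied to the $C^2$ left-hand side (recall $f,g,h \in C^3$), yields a $C^2$ selection $y_{J,I}(\cdot)$. Assumption (A3) keeps the rank of the active gradient family constant on a neighborhood of $x^*$, so each $y_{J,I}$ remains a valid local branch. A combinatorial argument in the spirit of \cite{Ralph and Dempe 95} then shows that, shrinking $\delta_2$ if necessary, at every $x\in\mathbb{B}_{\delta_2}(x^*)$ the solution $y(x)$ coincides with $y_{J,I}(x)$ for some admissible pair $(J,I)$; hence $y(\cdot)$ is a continuous selection of the finite family $\{y_{J,I}\}$, i.e. a $PC^2$ mapping. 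Proposition \ref{prop PC2} then delivers second-order directional differentiability and second-order gph-regularity of $y(\cdot)$ at every point of $\mathbb{B}_{\delta_2}(x^*)$. The equivalence of bi-local solutions of (BP) with local solutions of (SP) is copied from the proof of Lemma 4.1.

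The main obstacle is the combinatorial step ensuring that $y(\cdot)$ is covered by a \emph{finite} family of $C^2$ branches. One must rule out active-set oscillations of $(P_x)$ as $x$ varies and control non-unique multipliers, and this is precisely where CRCQ enters decisively: it guarantees rank constancy of the selected gradient family on a whole neighborhood, so the reduced implicit systems remain well-posed and their solutions remain feasible for the unreduced KKT conditions. Everything else---Lipschitz continuity via Clarke-type arguments, the passage from $PC^2$ to second-order gph-regularity, and the bi-local/local equivalence---is essentially bookkeeping that reuses already-established tools.
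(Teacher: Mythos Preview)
Your proposal is correct and follows essentially the same route as the paper: establish that $y(\cdot)$ is $PC^2$ via the Ralph--Dempe construction (the paper cites \cite[Proposition~7]{Ralph and Dempe 95} together with the second-order implicit function theorem rather than spelling out the reduced KKT branches as you do, but the content is the same), then apply Proposition~\ref{prop PC2} and finish with Lemma~\ref{lem MFCQ SSOSC}. Your sketch of the branch construction and the role of CRCQ in controlling the active sets is exactly what underlies the cited Ralph--Dempe argument.
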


\begin{proof}
  It follows from \cite[Theorem 2]{Ralph and Dempe 95} that $y(x)$ is locally Lipschitz continuous and $PC^1$ for $x \in \mathbb{B}_{\delta_2}(x^*)$.
 Moreover it follows from the proof of \cite[Proposition 7]{Ralph and Dempe 95} and the second-order implicit function theorem \cite[page 364]{Lang 93}  that $y(x)$ is a PC$^2$ mapping.
   Then by Proposition \ref{prop PC2},
  $y(x)$ is second-order directionally differentiable and  second-order gph-regular for all $x \in \mathbb{B}_{\delta_1}(x^*)$. The remaining conclusion holds from Lemma \ref{lem MFCQ SSOSC}.
\end{proof}

For ease of notation, we  define $\mathcal{A}(x,W)$ and $\mathcal{H}(x,W)$ for a matrix $W\in\mathbb{R}^{s\times s}$ as follows:
\begin{equation}\label{A(x,W)}
  \mathcal{A}(x,W):=\left[
		\begin{array}{ccc}
			\nabla_{yy}^2 \mathcal{L}(x;y(x),\mu(x),\xi(x))&\mathcal{J}_y h(x,y(x))^T&\mathcal{J}_y g(x,y(x))^T\\
             \mathcal{J}_y h(x,y(x))&0&0\\
			(I-W)\mathcal{J}_y g(x,y(x))&0&-W
		\end{array}\right]
\end{equation}
and
\begin{align}\label{H(x,W)}
  \mathcal{H}(x,W):=& \mathcal{A}(x,W)^{-1}
  \left(
		\begin{array}{c}
			\nabla_{yx}^2 \mathcal{L}(x;y(x),\mu(x),\xi(x))\\
\mathcal{J}_x h(x,y(x))\\
			(I-W)\mathcal{J}_x g(x,y(x))
		\end{array}\right).
\end{align}


\begin{lemma}\label{lemma Assump A y()}
 Suppose that assumptions   \textnormal{(A2)} and \textnormal{(A4)}  hold. Then there exist positive constants
$\delta_3<\delta_2$ and $\varepsilon_3<\varepsilon_2$, and a locally Lipschitz continuous mapping  $
(y,\mu,\xi):\mathbb{B}_{\delta_3}(x^*)\to\mathbb{B}_{\varepsilon_3}(y^*)\times\mathbb{B}_{\varepsilon_3}(\mu^*)
\times\mathbb{B}_{\varepsilon_3}(\xi^*) $ such that $(y(x^*),\mu(x^*),\xi(x^*))=(y^*,\mu^*,\xi^*)$ and $(y(x),\mu(x),\xi(x))$ is the locally unique solution pair
 satisfying the KKT conditions  of the lower-level problem $(P_x)$, i.e.,
 \begin{align}\label{Fkkt}
           F_{KKT}(x):= & \left[\begin{array}{c}
                                \nabla_y \mathcal{L}(x;y(x) ,\mu(x) ,\xi(x))\\
                                 h(x,y(x))\\
                                  g(x,y(x))-\Pi_{\mathbb{R}^s_{-}}(g(x,y(x))+\xi(x))
                              \end{array}\right]=0,
         \end{align}
          and $(y(x),\mu(x),\xi(x))$   is second-order directionally differentiable and second-order gph-regular
for $x\in \mathbb{B}_{\delta_3}(x^*)$. Moreover, the Clarke generalized Jacobian of $(y(\cdot),\mu(\cdot),\xi(\cdot))$ at $x$ satisfies
\begin{align}\label{partial y mu xi}
     \partial \left(\begin{array}{c}
                       y \\
                      \mu \\
                       \xi
                     \end{array}\right)(x)\in &\left\{-\mathcal{H}(x,W):W\in \partial \Pi_{\mathbb{R}^s_{-} }(g(x,y(x))+\xi(x))\right\},
\end{align}

when $x\in \mathbb{B}_{\delta_3}(x^*)$.
\end{lemma}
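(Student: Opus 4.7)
The plan is to reformulate the KKT system as a single locally Lipschitz equation $F_{KKT}(x,y,\mu,\xi)=0$ (as written in~\eqref{Fkkt}, but now viewed as a function of all four arguments) and then invoke Clarke's implicit function theorem together with Corollary~\ref{cor implicit function outer second-order gph-regular} to simultaneously extract the local solution mapping and its second-order gph-regularity. First I would show that the extended $F_{KKT}$ is second-order directionally differentiable and second-order gph-regular at $(x^*,y^*,\mu^*,\xi^*)$. All smooth blocks are handled automatically since $f,g,h$ are thrice continuously differentiable, so the nontrivial ingredient is the projection $\Pi_{\mathbb{R}^s_-}(z)=\bigl(\min\{0,z_1\},\dots,\min\{0,z_s\}\bigr)$. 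Being a componentwise minimum of $C^2$ functions, $\Pi_{\mathbb{R}^s_-}$ is $PC^2$ and hence by Example~\ref{example min} and Proposition~\ref{prop PC2} second-order directionally differentiable and second-order gph-regular everywhere; composing it with the smooth inner mapping $(x,y,\xi)\mapsto g(x,y)+\xi$ via Proposition~\ref{prop a composition second-order gph-regular} propagates both properties to $F_{KKT}$.

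The principal obstacle is verifying the maximal-rank condition on the partial Clarke generalized Jacobian $\pi_{(y,\mu,\xi)}\partial F_{KKT}(x^*,y^*,\mu^*,\xi^*)$ required by Corollary~\ref{cor implicit function outer second-order gph-regular}. Writing $z^*=g(x^*,y^*)+\xi^*$, the matrices in $\partial\Pi_{\mathbb{R}^s_-}(z^*)$ are exactly the diagonal matrices $W$ with $W_{ii}=1$ on the strictly inactive index set $\{i:g_i(x^*,y^*)<0\}$ (where $\xi_i^*=0$ by complementarity), $W_{ii}=0$ on $I_{x^*}^+(y^*)$, and $W_{ii}\in[0,1]$ on the biactive set $\{i:g_i(x^*,y^*)=0,\ \xi_i^*=0\}$. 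A direct chain-rule calculation identifies every element of the partial Clarke Jacobian of $F_{KKT}$ in $(y,\mu,\xi)$ with the block matrix $\mathcal{A}(x^*,W)$ defined in~\eqref{A(x,W)}. Nonsingularity of $\mathcal{A}(x^*,W)$ for every admissible $W$ then follows from the standard strong-regularity argument for KKT systems under LICQ and SSOSC (cf.\ Robinson's strong regularity): LICQ supplies the linear independence needed to eliminate the rows of $\mathcal{J}_y h$ and the strongly-active part of $\mathcal{J}_y g$, while SSOSC guarantees positive definiteness of $\nabla_{yy}^2\mathcal{L}$ on $\mathrm{aff}\,\mathcal{C}_{x^*}(y^*)$, which collapses the remaining Schur complement.

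Once maximal rank is in hand, Corollary~\ref{cor implicit function outer second-order gph-regular} furnishes constants $\delta_3>0$ and $\varepsilon_3>0$ (which may be shrunk so that $\delta_3<\delta_2$ and $\varepsilon_3<\varepsilon_2$, keeping us inside the neighborhood of Lemma~\ref{lem MFCQ SSOSC CRCQ}) and a locally Lipschitz mapping $(y,\mu,\xi)(\cdot)$ defined on $\mathbb{B}_{\delta_3}(x^*)$ that is second-order directionally differentiable and second-order gph-regular at $x^*$. By LICQ, this implicit solution coincides with the unique triple satisfying the KKT conditions of $(P_x)$ near $(y^*,\mu^*,\xi^*)$, so in particular $y(x)$ agrees with the lower-level local solution mapping produced in Lemma~\ref{lem MFCQ SSOSC CRCQ}. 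Finally, applying the Clarke chain rule to the identity $F_{KKT}(x,(y,\mu,\xi)(x))\equiv 0$ yields that every element of $\partial(y,\mu,\xi)(x)$ is of the form $-\mathcal{A}(x,W)^{-1}$ multiplied by the $x$-derivative column appearing inside~\eqref{H(x,W)}, i.e.\ $-\mathcal{H}(x,W)$ for some $W\in\partial\Pi_{\mathbb{R}^s_-}(g(x,y(x))+\xi(x))$, which is precisely the inclusion~\eqref{partial y mu xi}.
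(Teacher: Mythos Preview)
Your proposal is correct and follows essentially the same route as the paper: verify that $F_{KKT}$ is second-order gph-regular (the paper leaves this implicit), show that every $\mathcal{A}(x^*,W)$ is nonsingular under LICQ and SSOSC (the paper cites \cite[Proposition~2.3]{Daiyh and Zhanglw} for this), and then invoke Corollary~\ref{cor implicit function outer second-order gph-regular}; the existence, Lipschitz continuity, and Jacobian inclusion~\eqref{partial y mu xi} that you extract from Clarke's implicit function theorem and chain rule are what the paper obtains by citing \cite{Robinson 80} and \cite[Proposition~2.4]{Daiyh and Zhanglw}.
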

\begin{proof}
  The main results follow from  \cite[Theorems 2.1 and 4.1 and Corollary 2.2]{Robinson 80} and   \cite[Proposition 2.4]{Daiyh and Zhanglw}. It follows from \cite[Proposition 2.3]{Daiyh and Zhanglw} that   the set $\{\mathcal{A}(x,W):W\in \partial \Pi_{\mathbb{R}^s_{-} }(g(x,y(x))+\xi(x))\}$ is of maximal rank, then the second-order directional differentiability and second-order gph-regularity are established via  Corollary \ref{cor implicit function outer second-order gph-regular}.
\end{proof}

\subsection{First-order necessary optimality conditions in primary form}\label{sec first-order optimality
condition by directional}

In this subsection, we derive first-order necessary optimality conditions  for (SP) and (FP) in primary form under different assumptions  by exploiting directional derivatives. Define $\Phi:=\{(x,y)\in \mathbb{R}^n \times \mathbb{R}^m:H(x,y)=0, G(x,y)\le0\}$ and denote the feasible sets of (SP) and (FP)  by $\Omega_1$ and $\Omega_2$, respectively.
\begin{theorem}\label{thm FONC for SP}
Suppose that assumptions  \textnormal{(A1)}--\textnormal{(A3)}  hold  and $(x^*,y^*)$ is a bi-local solution of (BP). If the MSCQ for $\Omega_1$ holds at $x^*$,
    then $d_x=0$ is an optimal solution of the problem $(SP_{d_x})$
       \begin{subequations}\label{linearied cone SP}
         \begin{align}
   \min_{d_x} ~ & \nabla_x F(x^*, y^*)^{T} d_x+\nabla_y F(x^*, y^*)^{T}y'(x^*;d_x)\notag\\
   {\rm s.t.}~&\nabla_x H_i(x^*, y^*)^{T} d_x+\nabla_y H_i(x^*, y^*)^{T} y'(x^*;d_x) = 0,
                  \quad  i\in[p], \\
        &\nabla_x G_i(x^*, y^*)^{T} d_x+\nabla_y G_i(x^*, y^*)^{T} y'(x^*;d_x)  \leq 0,
         \quad  i \in I_{G}.
   \end{align}
       \end{subequations}
\end{theorem}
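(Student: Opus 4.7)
The plan is to recast $(SP)$ as an instance of the abstract nonsmooth problem $(P)$ and then invoke the first-order necessary condition from Theorem \ref{thm FONC for nonsmooth}(ii). First I would apply Lemma \ref{lem MFCQ SSOSC CRCQ}: under assumptions (A1)--(A3), the local solution mapping $y(x)$ of the lower-level problem is single-valued, locally Lipschitz continuous and (in particular) directionally differentiable near $x^*$, and the bi-local optimality of $(x^*,y^*)$ for (BP) is equivalent to $x^*$ being a local minimizer of (SP).

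Next, I would set $\widetilde F(x):=F(x,y(x))$ and
\[
\widetilde G(x):=\bigl(H(x,y(x)),\,G(x,y(x))\bigr),\qquad K:=\{0\}^{p}\times\mathbb{R}^{q}_{-},
\]
so that (SP) becomes $\min\widetilde F(x)$ subject to $\widetilde G(x)\in K$ with feasible set $\Omega_1$. Because $F,H,G$ are $C^2$ and $y(\cdot)$ is locally Lipschitz and directionally differentiable at $x^*$, the chain rule for directional derivatives of compositions of a $C^1$ map with a Lipschitz directionally differentiable map (Proposition 2.47 in Bonnans--Shapiro, as used earlier) yields
\begin{align*}
\widetilde F'(x^*;d_x)&=\nabla_x F(x^*,y^*)^{T}d_x+\nabla_y F(x^*,y^*)^{T}y'(x^*;d_x),\\
\widetilde G'(x^*;d_x)&=\bigl(\mathcal{J}_x H(x^*,y^*)d_x+\mathcal{J}_y H(x^*,y^*)y'(x^*;d_x),\,\mathcal{J}_x G(x^*,y^*)d_x+\mathcal{J}_y G(x^*,y^*)y'(x^*;d_x)\bigr),
\end{align*}
and $\widetilde F,\widetilde G$ inherit local Lipschitz continuity from the factors.

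With this translation the hypotheses of Theorem \ref{thm FONC for nonsmooth}(ii) are satisfied, since the MSCQ for $\Omega_1$ at $x^*$ is assumed. The theorem then guarantees that $d_x=0$ solves
\[
\min_{d_x}\ \widetilde F'(x^*;d_x)\quad\text{s.t.}\quad \widetilde G'(x^*;d_x)\in\mathcal{T}_{K}(\widetilde G(x^*)).
\]
Finally I would compute the tangent cone: since $\widetilde G(x^*)=(0,G(x^*,y^*))$ and $K=\{0\}^{p}\times\mathbb{R}^{q}_{-}$ is a product of polyhedral cones, one has
\[
\mathcal{T}_{K}(\widetilde G(x^*))=\{0\}^{p}\times\bigl\{v\in\mathbb{R}^{q}:v_i\le 0,\ i\in I_G\bigr\},
\]
which, upon substituting the explicit formulas for $\widetilde G'(x^*;d_x)$, produces exactly the equality constraints (\ref{linearied cone SP}a) for $i\in[p]$ and the inequality constraints (\ref{linearied cone SP}b) for $i\in I_G$. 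The main step that requires care is verifying the directional-derivative chain rule in this Lipschitz setting and correctly identifying the tangent cone to the product set $K$; both are standard but must be matched precisely to the form stated in $(SP_{d_x})$.
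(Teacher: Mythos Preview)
Your proposal is correct and follows essentially the same route as the paper: both reduce (SP) to an instance of the abstract problem $(P)$, invoke Lemma~\ref{lem MFCQ SSOSC CRCQ} for the Lipschitz/directional differentiability of $y(\cdot)$ and the equivalence with bi-local optimality, compute the directional derivatives via the chain rule (equation~(\ref{composite chain rule h''}) in the paper), identify the linearized constraint set with $\mathcal{T}_{\Omega_1}(x^*)$ via Proposition~\ref{prop-tangent cone=linearized}, and finish with Theorem~\ref{thm FONC for nonsmooth}(ii). Your write-up is actually more explicit than the paper's terse proof about the tangent cone of the product set $K=\{0\}^p\times\mathbb{R}^q_-$ and about why $x^*$ is a local minimizer of (SP), but the argument is the same.
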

\begin{proof}
It follows from Proposition \ref{prop-tangent cone=linearized} and (\ref{composite chain rule h''}) that we derive
that the tangent cone $\mathcal{T}_{\Omega_1}(x^*)$ coincides with (\ref{linearied cone SP}).
  Since (SP) is a nonsmooth optimization problem and functions involved are Lipschitz continuous and
  directionally differentiable, then it follows from Theorem \ref{thm FONC for nonsmooth} that we can derive the
  consequence.
\end{proof}

 In the rest of the subsection, we demonstrate that  the  first-order optimality conditions of $(SP)$ and $(FP)$ are equivalent under assumptions \textnormal{(A2)} and \textnormal{(A4)}. First we show that the nonsmooth Mangasarian-Fromovitz constraint qualification holds for the problem (SP) if and only if the one holds for the problem (FP). The generalized MFCQ (GMFCQ) was first introduced by Hiriart-Urruty \cite{Hiriart-Urruty}.

 It follows from (\ref{partial y mu xi}) that $\partial y(x)\subseteq\left\{ -[ I_m~ 0~ 0]\mathcal{H}(x,W):W\in \partial \Pi_{\mathbb{R}^s_{-}}(g(x,y(x))+\xi(x))  \right\}$.
 We say the GMFCQ holds at $x^*$ for $\Omega_1$
 if for any $ W^*\in
\partial\Pi_{\mathbb{R}_{-}^s}(g(x^*,y^*)+\xi^*)$, the following conditions hold:
\begin{itemize}
  \item [(i)]  $\mathcal{J}_xH(x^*,y^*)-\mathcal{J}_yH(x^*,y^*)[\begin{array}{ccc}
                                                              I_m & 0 & 0
                                                            \end{array}]\mathcal{H}(x^*,W^*)$ has full row rank.
  \item [(ii)]There exists $d_x\in \mathbb{R}^n$ such that
  \begin{align} \label{MFCQ-b}
   &    \left(\mathcal{J}_xH(x^*,y^*)-\mathcal{J}_yH(x^*,y^*)[\begin{array}{ccc}
                                                              I_m & 0 & 0
                                                            \end{array}]\mathcal{H}(x^*,W^*)\right)d_x=0, \\
\label{MFCQ-c}&  \left(\nabla_xG_i(x^*,y^*)^T-\nabla_yG_i(x^*,y^*)^T[\begin{array}{ccc}
                                                              I_m & 0 & 0
                                                            \end{array}]\mathcal{H}(x^*,W^*)\right)d_x<0,\quad
                                                            i\in I_{G},
  \end{align}
   where $I_G:=\{i\in[q]:G_i(x^*,y^*)=0\}$.
\end{itemize}

\begin{proposition}\label{prop MFCQ of SP FP}
 Suppose that assumptions \textnormal{(A2)} and \textnormal{(A4)} hold. Then
the GMFCQ holds at $x^*$ for $\Omega_1$ if and only if  the GMFCQ holds for $\Omega_2$ at $(x^*,y^*,\mu^*,\xi^*)$.
	\end{proposition}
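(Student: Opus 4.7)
The plan is to reduce the GMFCQ condition on $\Omega_2$ to the one on $\Omega_1$ by using the nonsingularity of the $(m+r+s)\times(m+r+s)$ block $\mathcal{A}(x^*,W^*)$ in the linearized KKT system to eliminate the auxiliary direction $(d_y,d_\mu,d_\xi)$ in favor of $d_x$. Lemma \ref{lemma Assump A y()} guarantees that $\mathcal{A}(x^*,W^*)$ is nonsingular for every $W^*\in\partial\Pi_{\mathbb{R}_{-}^s}(g(x^*,y^*)+\xi^*)$, and it is precisely the associated $\mathcal{H}(x^*,W^*)$ from (\ref{H(x,W)}) that appears on both sides of the equivalence, which is why the proof should ultimately be a matter of block elimination.

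First I would write out the GMFCQ for $\Omega_2$ at $(x^*,y^*,\mu^*,\xi^*)$ explicitly. Since $H$ and $G$ are smooth and depend only on $(x,y)$, while $F_{KKT}$ is locally Lipschitz with Clarke generalized Jacobian parameterized by $W\in\partial\Pi_{\mathbb{R}_{-}^s}(g(x^*,y^*)+\xi^*)$ through the chain rule applied to the projection term, the condition is: for every such $W^*$, the combined equality-constraint Jacobian
\[
Z(W^*)=\left[\begin{array}{cccc}
\mathcal{J}_xH & \mathcal{J}_yH & 0 & 0 \\
\nabla_{yx}^2\mathcal{L} & \nabla_{yy}^2\mathcal{L} & \mathcal{J}_yh^T & \mathcal{J}_yg^T \\
\mathcal{J}_xh & \mathcal{J}_yh & 0 & 0 \\
(I-W^*)\mathcal{J}_xg & (I-W^*)\mathcal{J}_yg & 0 & -W^*
\end{array}\right]
\]
has full row rank, and there exists $(d_x,d_y,d_\mu,d_\xi)$ annihilating $Z(W^*)$ and satisfying $\nabla_xG_i^Td_x+\nabla_yG_i^Td_y<0$ for every $i\in I_G$.

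The crucial structural observation is that the lower-right $(m+r+s)\times(m+r+s)$ block of $Z(W^*)$ coincides with $\mathcal{A}(x^*,W^*)$, which is invertible. Solving the last three row blocks of $Z(W^*)(d_x;d_y;d_\mu;d_\xi)=0$ uniquely gives $(d_y;d_\mu;d_\xi)=-\mathcal{H}(x^*,W^*)d_x$. Substituting this back into the first row block and into the linearized $G_i$ inequalities yields exactly the quantities $\bigl(\mathcal{J}_xH-\mathcal{J}_yH[\begin{array}{ccc}I_m & 0 & 0\end{array}]\mathcal{H}(x^*,W^*)\bigr)d_x$ and $\bigl(\nabla_xG_i^T-\nabla_yG_i^T[\begin{array}{ccc}I_m & 0 & 0\end{array}]\mathcal{H}(x^*,W^*)\bigr)d_x$ appearing in (\ref{MFCQ-b})--(\ref{MFCQ-c}). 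A Schur-complement argument using the invertibility of $\mathcal{A}(x^*,W^*)$ then shows that $Z(W^*)$ has full row rank $p+m+r+s$ if and only if $\mathcal{J}_xH-\mathcal{J}_yH[\begin{array}{ccc}I_m & 0 & 0\end{array}]\mathcal{H}(x^*,W^*)$ has full row rank $p$, which is condition (i) of the GMFCQ for $\Omega_1$. Since both GMFCQ conditions are required to hold for the same range of $W^*$, the ``if and only if'' follows in both directions by passing back and forth through the substitution $(d_y;d_\mu;d_\xi)=-\mathcal{H}(x^*,W^*)d_x$.

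The main delicate point will be correctly identifying the Clarke generalized Jacobian of the nonsmooth component $g(x,y)-\Pi_{\mathbb{R}_{-}^s}(g(x,y)+\xi)$, and in particular verifying that its chain rule yields exactly the last row block $[(I-W)\mathcal{J}_xg,(I-W)\mathcal{J}_yg,0,-W]$ with $W\in\partial\Pi_{\mathbb{R}_{-}^s}(g(x^*,y^*)+\xi^*)$; once this is established, the rest of the argument is linear algebra based on the block decomposition of $Z(W^*)$ relative to $\mathcal{A}(x^*,W^*)$.
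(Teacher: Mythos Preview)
Your proposal is correct and follows essentially the same approach as the paper: both arguments hinge on the nonsingularity of the lower-right block $\mathcal{A}(x^*,W^*)$ of the linearized constraint Jacobian $Z(W^*)$, using it to eliminate $(d_y,d_\mu,d_\xi)=-\mathcal{H}(x^*,W^*)d_x$ and to reduce the full-row-rank condition on $Z(W^*)$ to that on its Schur complement $\mathcal{J}_xH-\mathcal{J}_yH[\begin{array}{ccc}I_m&0&0\end{array}]\mathcal{H}(x^*,W^*)$. The paper implements the Schur complement step by right-multiplying $A$ by an explicit block-triangular matrix $B$, whereas you invoke the Schur complement directly, but the underlying argument is identical.
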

\begin{proof} The GMFCQ holds for $\Omega_2$ at $(x^*,y^*,\mu^*,\xi^*)$, if for any $ W^*\in
\partial\Pi_{\mathbb{R}_{-}^s}(g(x^*,y^*)+\xi^*)$, the following conditions hold:
\begin{itemize}
  \item[(a)] The matrix
  \begin{align}
           &  A:=\left[\begin{array}{cccc}
                     \mathcal{J}_xH(x^*,y^*) & \mathcal{J}_yH(x^*,y^*) & 0 & 0 \\
                    \nabla_{yx}^2 \mathcal{L}(x^*;y^*,\mu^*,\xi^*)
                    &\nabla_{yy}^2 \mathcal{L}(x^*;y^*,\mu^*,\xi^*)
                    &\mathcal{J}_y h(x^*,y^*)^T &\mathcal{J}_y g(x^*,y^*)^T \\
                     \mathcal{J}_x h(x^*,y^*) & \mathcal{J}_y h(x^*,y^*)&0&0 \\
                     (I-W^*)\mathcal{J}_x g(x^*,y^*) & (I-W^*)\mathcal{J}_y g(x^*,y^*)&0& -W^*
                   \end{array}
  \right]\notag
  \end{align}
  has full row rank.
  \item[(b)]There is $d= (d_x,d_y,d_{\mu},d_{\xi})\in ker A $ such that
      \begin{eqnarray}
    &&  \nabla_xG_i(x^*,y^*)^Td_x+  \nabla_yG_i(x^*,y^*)^T d_y  <0,\quad i\in I_G. \label{mfg}
  \end{eqnarray}
\end{itemize}
We prove the equivalence by the following two steps.

(1) For any $ W^*\in \partial\Pi_{\mathbb{R}_{-}^s}(g(x^*,y^*)+\xi^*)$, let
$$B:=\left[\begin{array}{cc}
                I_n & 0  \\
                 -\mathcal{H}(x^*,W^*)&I_{m+r+s}
              \end{array}\right].$$
Since the matrix $B$ is nonsingular, then $rank(A)=rank(AB)$.
                By (\ref{H(x,W)}), we have that
 \begin{align*}
   &AB=\left[\begin{array}{cc}
                \mathcal{J}_xH(x^*,y^*)-
                V\mathcal{H}(x^*,W^*)&V\\
                0&\mathcal{A}(x^*,W^*)
              \end{array}\right],
              \end{align*}
  where $V:=[\begin{array}{ccc}
       \mathcal{J}_yH(x^*,y^*) &0 & 0
        \end{array}]=\mathcal{J}_yH(x^*,y^*) [\begin{array}{ccc}
          I_m &0 & 0
            \end{array}]$ and $ \mathcal{A}(x,W )$ is defined by (\ref{A(x,W)}).
 It follows from \cite[Proposition 2.2]{Daiyh and Zhanglw} that $\mathcal{A}(x^*,W^*)$  is nonsingular,
  then $A$ has full row rank if and only if
  $\mathcal{J}_xH(x^*,y^*)-\mathcal{J}_yH(x^*,y^*)[\begin{array}{ccc}
          I_m &0 & 0
            \end{array}] \mathcal{H}(x^*,W^*)$ has full row rank.

(2) Suppose there exists $d= (d_x,d_y,d_{\mu},d_{\xi})\in ker A$, i.e.,
  \begin{eqnarray}
        &\mathcal{J}_xH(x^*,y^*)d_x+\mathcal{J}_yH(x^*,y^*)d_y=0,\label{4-2-1}\\
    & \left[ \begin{array}{c}
           \nabla_{yx}^2 \mathcal{L}(x^*;y^*,\mu^*,\xi^*)
                     \\
          \mathcal{J}_x h(x^*,y^*)  \\
       (I-W^*)\mathcal{J}_x g(x^*,y^*)
         \end{array} \right]d_x +\mathcal{A}(x^*,W^*)\left[\begin{array}{c}
       d_y\\
       d_{\mu}\\
       d_{\xi}
              \end{array}
         \right]=0.\label{4-2}
 \end{eqnarray}
Then the condition $ (\ref{4-2})$ becomes
 \begin{eqnarray}
&&\left[\begin{array}{c}
       d_y\\
       d_{\mu}\\
       d_{\xi}
              \end{array}
         \right]=-\mathcal{A}(x^*,W^*)^{-1}\left[ \begin{array}{c}
           \nabla_{yx}^2 \mathcal{L}(x^*;y^*,\mu^*,\xi^*)
                     \\
          \mathcal{J}_x h(x^*,y^*)  \\
       (I-W^*)\mathcal{J}_x g(x^*,y^*)
         \end{array} \right]d_x=-\mathcal{H}(x^*,W^*)d_x.\label{4-3}
 \end{eqnarray}
Replacing $d_y$ by $ -[\begin{array}{ccc}
                        I_m & 0 & 0
                      \end{array}]\mathcal{H}(x^*,W^*)d_x$ into the conditions $(\ref{4-2-1})$ and (\ref{mfg}), we
                      derive the conditions (\ref{MFCQ-b}) and (\ref{MFCQ-c}).
 It shows that if the GMFCQ holds at $(x^*,y^*,\mu^*,\xi^*)$ for $\Omega_2$, then
 $\mathcal{J}_xH(x^*,y^*)-\mathcal{J}_yH(x^*,y^*)[\begin{array}{ccc}
          I_m &0 & 0
            \end{array}] \mathcal{H}(x^*,W^*)$ has full row rank and (\ref{MFCQ-b})-(\ref{MFCQ-c}) hold.

 Conversely, if there exists $d_x\in \mathbb{R}^n$ such that (\ref{MFCQ-b})-(\ref{MFCQ-c}) hold, by setting $(d_y,
 d_{\mu}, d_{\xi})$ as in (\ref{4-3}), we derive that (\ref{mfg})-(\ref{4-2}) hold. Then the GMFCQ holds at
 $(x^*,y^*,\mu^*,\xi^*)$ for $\Omega_2$.
We complete the proof.
\end{proof}
It follows from  \cite[Theorem 4.2]{Hiriart-Urruty} and \cite[Corollary 5 of Theorem 6.5.2]{Clarke} that the GMFCQ is stronger than MSCQ.
From Proposition \ref{prop MFCQ of SP FP}, we derive the following equivalent conditions of the  first-order optimality conditions.
\begin{theorem}\label{thm FONC for FP}
 Suppose that assumptions \textnormal{(A2)} and \textnormal{(A4)} hold and $(x^*,y^*)$ is a bi-local solution of (BP). If the GMFCQ for $\Omega_2$ holds at $(x^*,y^*,\mu^*,\xi^*)$,
   then $d=0$ is an optimal solution of the problem  $(FP_{d})$
       \begin{subequations}\label{linearied cone FP}
         \begin{align}
         \label{linearied FP'}
   \min_{d}  ~& \nabla F(x^*, y^*)^{T} d_{(x,y)}\\
   \label{linearied FP b}
   {\rm s.t.}~&\nabla H_i(x^*, y^*)^{T} d_{(x,y)} = 0, \quad  i\in[p],  \\
   \label{linearied FP c}
 &\nabla G_i(x^*, y^*)^{T} d_{(x,y)}  \leq 0,\quad  i \in I_{G},\\
\label{linearied FP d}
        &\nabla_{yx}^2 \mathcal{L}(x^*;y^*,\mu^*,\xi^*)d_x+\nabla_{yy}^2 \mathcal{L}(x^*;y^*,\mu^*,\xi^*)d_y
        +\mathcal{J}_y h(x^*,y^*)^Td_{\mu}+\mathcal{J}_yg(x^*,y^*)^Td_{\xi}=0, \\
\label{linearied FP e}
        &   \nabla h_i(x^*,y^*)^Td_{(x,y)}=0, \quad  i\in[r],  \\
  \label{linearied FP f}
  &\nabla g_i(x^*,y^*)^Td_{(x,y)}-\Pi_{\mathbb{R}_{-}}'(g_i(x^*,y^*)+\xi_i^*;\nabla
             g_i(x^*,y^*)^Td_{(x,y)}+d_{\xi_i})=0,~i\in[s].
   \end{align}
       \end{subequations}

 Furthermore, the problems $(SP_{d_x})$ and $(FP_{d})$ are equivalent.
\end{theorem}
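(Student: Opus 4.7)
The plan is to split the theorem into two stages. First, I will establish that $d=0$ solves $(FP_d)$ by applying Theorem \ref{thm FONC for nonsmooth}(ii) to the nonsmooth problem (FP). Second, I will prove that $(SP_{d_x})$ and $(FP_d)$ are equivalent by exhibiting a bijection between their feasible sets that preserves the objective, via the directional derivatives of the implicit mapping $(y(x), \mu(x), \xi(x))$ supplied by Lemma \ref{lemma Assump A y()}.

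For the first stage, assumption (A4) implies (A1), so the remark following Lemma \ref{lem MFCQ SSOSC} guarantees that $(x^*, y^*, \mu^*, \xi^*)$ is a local minimizer of (FP) whenever $(x^*, y^*)$ is a bi-local solution of (BP). All data of (FP) are locally Lipschitz and directionally differentiable, being smooth functions composed with the PC$^1$ projection $\Pi_{\mathbb{R}^s_-}$. The hypothesized GMFCQ for $\Omega_2$ is strictly stronger than MSCQ by \cite[Theorem 4.2]{Hiriart-Urruty} together with \cite[Corollary 5 of Theorem 6.5.2]{Clarke}, so Theorem \ref{thm FONC for nonsmooth}(ii) applies. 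Chain-ruling the directional derivatives---in particular, using the explicit form of $\Pi'_{\mathbb{R}_-}$ for the complementarity constraint---converts the linearized tangent cone into (\ref{linearied FP b})--(\ref{linearied FP f}) and the linearized objective into (\ref{linearied FP'}).

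For the equivalence, given $d_x$ feasible for $(SP_{d_x})$, I would set $d_y := y'(x^*; d_x)$, $d_\mu := \mu'(x^*; d_x)$, and $d_\xi := \xi'(x^*; d_x)$. Differentiating the identity $F_{KKT}(x) \equiv 0$ in the Hadamard sense along $d_x$ shows that $(d_x, d_y, d_\mu, d_\xi)$ satisfies (\ref{linearied FP d})--(\ref{linearied FP f}), while the $(SP_{d_x})$ constraints and objective transcribe verbatim into (\ref{linearied FP b}), (\ref{linearied FP c}), and (\ref{linearied FP'}). Conversely, for any $d = (d_x, d_y, d_\mu, d_\xi)$ feasible for $(FP_d)$, conditions (\ref{linearied FP d})--(\ref{linearied FP f}) constitute the linearized lower-level KKT system along $d_x$; strong regularity of the lower level under (A2) and (A4)---manifested in the nonsingularity of $\mathcal{A}(x^*, W^*)$ established in \cite[Proposition 2.2]{Daiyh and Zhanglw}---forces $(d_y, d_\mu, d_\xi) = (y'(x^*; d_x), \mu'(x^*; d_x), \xi'(x^*; d_x))$, closing the bijection.

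The principal obstacle lies in this last uniqueness argument. Since $(y, \mu, \xi)$ is only PC$^1$ rather than smooth, condition (\ref{linearied FP f}) is piecewise linear at degenerate indices where $g_i(x^*, y^*) + \xi_i^* = 0$, so for each fixed $d_x$ one must verify that feasibility in $(FP_d)$ selects the same element $W^* \in \partial \Pi_{\mathbb{R}^s_-}(g(x^*, y^*) + \xi^*)$ as the active piece realizing the directional derivatives $(y', \mu', \xi')(x^*; d_x)$. Once that selection is identified, the system collapses to an invertible linear equation governed by $\mathcal{A}(x^*, W^*)$, and the equivalence follows.
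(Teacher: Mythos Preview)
Your two-stage plan matches the paper's proof: the first stage (GMFCQ $\Rightarrow$ MSCQ, then Theorem~\ref{thm FONC for nonsmooth}(ii) applied to (FP) with the chain rule identifying $\mathcal{T}_{\Omega_2}$ with (\ref{linearied FP b})--(\ref{linearied FP f})) is exactly what the paper does, and the forward direction of the equivalence via differentiating $F_{KKT}(x)\equiv 0$ is likewise identical.

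The only substantive difference is in how the uniqueness obstacle you flag is resolved. You propose to argue that, for a given $d$ feasible in $(FP_d)$, the active selection $W^*$ must coincide with the one realising $(y',\mu',\xi')(x^*;d_x)$, and then invoke invertibility of $\mathcal{A}(x^*,W^*)$ on the resulting linear system. The paper avoids this matching altogether: it regards (\ref{linearied FP d})--(\ref{linearied FP f}) as a nonsmooth (piecewise-linear) equation in the unknowns $(d_y,d_\mu,d_\xi)$ with parameter $d_x$, computes its Clarke generalized Jacobian with respect to $(d_y,d_\mu,d_\xi)$ \emph{at every point of $\mathbb{R}^{m+r+s}$}, obtains precisely the set $\mathbb{A}_C(x^*)=\{\mathcal{A}(x^*,W^*):W^*\in\partial\Pi_{\mathbb{R}^s_-}(g(x^*,y^*)+\xi^*)\}$, and then appeals to \cite[Proposition 2.2]{Daiyh and Zhanglw} for nonsingularity of every element together with Clarke's implicit function theorem \cite[Corollary of Theorem 7.1.1, Proposition 2.6.5]{Clarke} to conclude that the solution $(d_y,d_\mu,d_\xi)$ is unique for each $d_x$. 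Since $(y'(x^*;d_x),\mu'(x^*;d_x),\xi'(x^*;d_x))$ is already known to be one such solution, it must be the only one. This route is cleaner because it treats all degenerate indices simultaneously through the set-valued Jacobian, rather than tracking individual pieces; your identification of the difficulty is correct, but the resolution need not pass through a piece-by-piece selection argument.
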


\begin{proof}
From \cite[Remark 3.6]{Liu 25}, we know that $x^*$ is a local solution of (SP), and $(x^*,y^*,\mu^*,\xi^*)$ is a local solution of (FP).
It follows from Proposition \ref{prop-tangent cone=linearized} and (\ref{composite chain rule h''}) that we derive that  the tangent cone $\mathcal{T}_{\Omega_2}(x^*,y^*,\mu^*,\xi^*)$ coincides with (\ref{linearied FP b})-({\ref{linearied FP f}}).
  Since  (FP) is a nonsmooth optimization problem  and functions involved are Lipschitz continuous and
  directionally differentiable, then it follows from Theorem \ref{thm FONC for nonsmooth} that we can derive that $d=0$ is an optimal solution of the problem  $(FP_{d})$.

From (\ref{Fkkt}), we obtain that $F_{KKT}'(x;d_x)=0$ for any $d_x$.
Then for any $d_x\in \mathcal{T}_{\Omega_1}(x^*)$, it follows from the chain rule in  (\ref{composite chain rule h''}) that  (\ref{linearied FP b})-(\ref{linearied FP f}) hold. That is
 $$(d_x,y'(x^*;d_x),\mu'(x^*;d_x),\xi'(x^*;d_x))\in \mathcal{T}_{\Omega_2}(x^*,y^*,\mu^*,\xi^*).$$

  Calculating the Clarke generalized Jacobian of (\ref{linearied FP d})-(\ref{linearied FP f}) at every point in $\mathbb{R}^{m+r+s}$ with respect to variables
  $d_y,d_{\mu},d_{\xi}:$
  \begin{equation*}
  \left\{\left[
		\begin{array}{ccc}
			\nabla_{yy}^2 \mathcal{L}(x^*;y^*,\mu^*,\xi^*)&\mathcal{J}_y h(x^*,y^*)^T&\mathcal{J}_y g(x^*,y^*)^T\\
             \mathcal{J}_y h(x^*,y^*)&0&0\\
			(I-W^*)\mathcal{J}_y g(x^*,y^*)&0&-W^*
		\end{array}\right]:W^*\in \partial \Pi_{\mathbb{R}^s_{-} }(g(x^*,y^*)+\xi^*)\right\}:=\mathbb{A}_C(x^*).
\end{equation*}
It follows from \cite[Proposition 2.2]{Daiyh and Zhanglw} that every element in $ \mathbb{A}_C(x^*)$ is nonsingular.
From the Clarke implicit function theorem in \cite[Corollary of Theorem 7.1.1]{Clarke} and \cite[Proposition 2.6.5]{Clarke},
it follows that there exists only one solution $(d_x,y'(x^*;d_x),\mu'(x^*;d_x),\xi'(x^*;d_x))$ satisfying (\ref{linearied FP d})-(\ref{linearied FP f}).
Thus for any $d=(d_x, d_y,d_{\mu},d_{\xi})\in \mathcal{T}_{\Omega_2}(x^*,y^*,\mu^*,\xi^*)$, we must have that $(d_y,d_{\mu},d_{\xi})=(y'(x^*;d_x),\mu'(x^*;d_x),\xi'(x^*;d_x))$.
Then from (\ref{linearied FP b})-(\ref{linearied FP c}), $d_x\in \mathcal{T}_{\Omega_1}(x^*)$.

It is straightforward to derive that problems  $(SP_{d_x})$ and $(FP_d)$ are equivalent.
We complete the proof.
\end{proof}



\subsection{First-order necessary optimality conditions  in dual form}\label{sec: Optimality conditions by Clarke subdifferential}
In this subsection, we establish the dual form of first-order necessary optimality conditions for
 (BP) at a bi-local solution  under assumptions \textnormal{(A2)} and \textnormal{(A4)}.
The  Lagrangian functions of (SP) and (FP) are defined by
\begin{eqnarray}
   &&L^{SP}(x;\lambda_H,\lambda_G)=
   F(x,y(x))+\lambda_H^TH(x,y(x))+\lambda_G^TG(x,y(x)),\nonumber\\
   &&L^{FP}(x,y,\mu,\xi;\lambda)= F(x,y)+\lambda_H^TH(x,y)+\lambda_G^TG(x,y)
    +\lambda_{\mathcal{L}}^T\nabla_{y} \mathcal{L}(x;y,\mu,\xi)\nonumber\\
    &&~~~~~~~~~~~~~~~~~~~~~~~~~~~~~~+\lambda_h^Th(x,y)+\lambda_g^T(g(x,y)-\Pi_{\mathbb{R}_{-}^s}(g(x,y)+\xi)),\nonumber
 \end{eqnarray}
where $\lambda:=(\lambda_H,\lambda_G,\lambda_{\mathcal{L}},\lambda_h,\lambda_g)$, respectively.
For clarity of the notation, we define that
\begin{align*}
  & L(x,y;\lambda_H,\lambda_G)= F(x,y)+\lambda_H^TH(x,y)+\lambda_G^TG(x,y).
\end{align*}

\begin{theorem}\label{theorem dual First-order NOC for SP}
Suppose that assumptions \textnormal{(A2) and \textnormal{(A4)} hold} and  $(x^*,y^*)$ is a bi-local solution of (BP). If the GMFCQ holds at $x^*$ for $\Omega_1$, then there exist  $(\lambda_H^*,\lambda_G^*)\in
      \mathbb{R}^{p}\times\mathbb{R}^{q}$ and $ W^*\in
      \partial\Pi_{\mathbb{R}_{-}^s}(g(x^*,y^*)+\xi^*)$ such that
      \begin{align}
        &   \nabla_{x} L (x^*,y^*;\lambda^*_H,\lambda^*_G)-\mathcal{H}(x^*,W^*)^T\left[\begin{array}{c}
         \nabla_{y} L (x^*,y^*;\lambda^*_H,\lambda^*_G)\\0\\0
                  \end{array}\right]=0,\notag\\
         & H(x^*,y^*)= 0,\label{3-1}\\
        &0\le \lambda_G^*\bot G(x^*,y^*)\le 0.\notag
      \end{align}
\end{theorem}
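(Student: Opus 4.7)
The plan is to reduce the bi-local optimality of $(x^*,y^*)$ for (BP) to ordinary local optimality of $x^*$ for the implicit-function reformulation (SP), to invoke a nonsmooth KKT theorem for (SP), and then to unfold the explicit Clarke Jacobian of the solution mapping $y(\cdot)$ so as to expose the term involving $\mathcal{H}(x^*,W^*)$. Since (A4) (LICQ) implies (A1) (MFCQ), Lemma \ref{lemma Assump A y()} applies and yields a locally Lipschitz mapping $(y,\mu,\xi)(\cdot)$ near $x^*$, second-order gph-regular at $x^*$, with $(y(x^*),\mu(x^*),\xi(x^*))=(y^*,\mu^*,\xi^*)$ satisfying the KKT system of $(P_x)$, together with the Clarke Jacobian inclusion (\ref{partial y mu xi}). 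By Lemma \ref{lem MFCQ SSOSC}, the bi-local solution $(x^*,y^*)$ of (BP) corresponds to $x^*$ being a local minimizer of (SP), whose objective and constraint functions are all locally Lipschitz continuous in $x$.

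Next, I would invoke the Clarke-type nonsmooth KKT theorem applied to (SP). Because GMFCQ for $\Omega_1$ at $x^*$ is stronger than MSCQ, as recorded earlier in the section, the standard multiplier rule produces $(\lambda_H^*,\lambda_G^*)\in \mathbb{R}^p\times \mathbb{R}^q$ satisfying the feasibility and complementarity requirements $H(x^*,y^*)=0$ and $0\le \lambda_G^*\perp G(x^*,y^*)\le 0$, together with the stationarity condition $0\in \partial_x L^{SP}(x^*;\lambda_H^*,\lambda_G^*)$. Writing $L^{SP}(x;\lambda_H,\lambda_G)=L(x,y(x);\lambda_H,\lambda_G)$ and applying Clarke's chain rule, using the $C^2$-smoothness of $F$, $G$, $H$ and the Lipschitz property of $y(\cdot)$, gives
\begin{equation*}
\partial_x L^{SP}(x^*;\lambda_H^*,\lambda_G^*)\subseteq \nabla_x L(x^*,y^*;\lambda_H^*,\lambda_G^*)+\partial y(x^*)^T\nabla_y L(x^*,y^*;\lambda_H^*,\lambda_G^*).
\end{equation*}
Stationarity thus furnishes some $V\in \partial y(x^*)$ with $\nabla_x L+V^T\nabla_y L=0$, and the characterization (\ref{partial y mu xi}) lets me write $V=-[I_m~0~0]\,\mathcal{H}(x^*,W^*)$ for some $W^*\in \partial \Pi_{\mathbb{R}_-^s}(g(x^*,y^*)+\xi^*)$. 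Substituting and noting that $V^T\nabla_y L$ equals $-\mathcal{H}(x^*,W^*)^T$ applied to the block vector $[\nabla_y L;0;0]$ reproduces the equation in the statement.

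The main obstacle I anticipate is the final identification step: ensuring that the specific matrix $V\in \partial y(x^*)$ produced by Clarke stationarity admits a single $W^*$ with $V=-[I_m~0~0]\,\mathcal{H}(x^*,W^*)$. Since $\partial y(x^*)$ is the convex hull of limiting Jacobians while $W\mapsto \mathcal{H}(x^*,W)$ is generally nonlinear through the inverse $\mathcal{A}(x^*,W)^{-1}$, an arbitrary convex combination of such $W$-parametrized matrices need not take the same form. Fortunately, this is precisely what the inclusion (\ref{partial y mu xi}) from Lemma \ref{lemma Assump A y()} provides: every element of the Clarke Jacobian, and in particular the element $V$ realizing stationarity, already admits such a $W^*$-representation, so the identification goes through without further effort and the three displayed optimality conditions follow immediately.
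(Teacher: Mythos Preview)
Your proposal is correct and follows essentially the same route as the paper: reduce the bi-local optimality of $(x^*,y^*)$ to local optimality of $x^*$ for (SP), apply Clarke's nonsmooth Fritz--John/KKT multiplier rule under GMFCQ, and then unfold $\partial_x L^{SP}$ via the chain rule together with the Jacobian inclusion (\ref{partial y mu xi}) to obtain the $\mathcal{H}(x^*,W^*)$-representation. The only cosmetic difference is that the paper collapses your two-step chain-rule plus Jacobian-identification argument into a single inclusion $\partial_x L^{SP}(x^*;\lambda_H^*,\lambda_G^*)\subseteq\{\nabla_x L-\mathcal{H}(x^*,W^*)^T[I_m\;0\;0]^T\nabla_y L:W^*\in\partial\Pi_{\mathbb{R}^s_-}(g(x^*,y^*)+\xi^*)\}$, citing the chain rule of \cite[Theorem 10.20]{Clarke 13} and \cite[Proposition 2.4]{Daiyh and Zhanglw} directly; your worry about convex combinations is, as you note, already handled by the fact that (\ref{partial y mu xi}) bounds the full Clarke Jacobian, and the paper itself records the consequence $\partial y(x^*)\subseteq\{-[I_m\;0\;0]\mathcal{H}(x^*,W):W\in\partial\Pi_{\mathbb{R}^s_-}(g(x^*,y^*)+\xi^*)\}$ just before the GMFCQ definition.
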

\begin{proof}
  Since $(x^*,y^*)$ is a bi-local solution of (BP), we  know that $x^*$ is a local solution of (SP) from
  \cite[Remark 3.6]{Liu 25}.
  It follows from
the nonsmooth Fritz John-type multiplier rule by Clarke \cite[Theorem 6.1.1]{Clarke}
 and the GMFCQ for $\Omega_1$ that 
there exists
$(\lambda_H^*,\lambda_G^*)\in  \mathbb{R}^{p}\times\mathbb{R}^{q}_{+}$, not all zero such
that $\langle \lambda_{G}^*, G(x^*,y^*)\rangle= 0$ and
\begin{align*}
 0 \in \partial_xL^{SP}(x^*,\lambda_H^*,\lambda_G^*)\subseteq
  \{&\nabla_xL(x^*,y^*;\lambda_H^*,\lambda_G^*)-\mathcal{H}(x^*,W^*)^T[\begin{array}{ccc}
                             I_m&0&0
                           \end{array}]^T\nabla_yL(x^*,y^*;\lambda_H^*,\lambda_G^*)\\
                           &:W^*\in \partial \Pi_{\mathbb{R}^s_{-}}(g(x^*,y^*)+\xi^*)\},
\end{align*}
where the last inclusion relation holds by the chain rule of \cite[Theorem 10.20]{Clarke 13} and \cite[Proposition 2.4]{Daiyh and Zhanglw}.
The proof is completed.
\end{proof}

\begin{theorem}\label{theorem dual First-order NOC for FP}
 Suppose that assumptions \textnormal{(A2)} and \textnormal{(A4)} hold and  $(x^*,y^*)$ is a bi-local solution of (BP).  If the GMFCQ holds at $(x^*,y^*,\mu^*,\xi^*)$ for $\Omega_2$, then there exist
      $\lambda^*=(\lambda_H^*,\lambda_G^*,\lambda_{\mathcal{L}}^*,\lambda_h^*,\lambda_g^*)
  \in\mathbb{R}^{p}\times\mathbb{R}^{q}\times\mathbb{R}^{m}
  \times\mathbb{R}^{r}\times\mathbb{R}^{s}$ and $ W^*\in \partial\Pi_{\mathbb{R}_{-}^s}(g(x^*,y^*)+\xi^*)$ such
  that
   \begin{align}\label{3-2}
&\nabla_{x}L(x^*,y^*;\lambda_H^*,\lambda_G^*)+\nabla_{xy}^2 \mathcal{L}(x^*;y^*,\mu^*,\xi^*)
\lambda_{\mathcal{L}}^* +\mathcal{J}_x h(x^*,y^*)^T\lambda_h^*+ \mathcal{J}_x
g(x^*,y^*)^T(I-W^*)\lambda_g^*=0,\notag\\
&  \nabla_{y}L(x^*,y^*;\lambda_H^*,\lambda_G^*)+\nabla_{yy}^2 \mathcal{L}(x^*;y^*,\mu^*,\xi^*)
\lambda_{\mathcal{L}}^* +\mathcal{J}_y h(x^*,y^*)^T\lambda_h^*+ \mathcal{J}_y
g(x^*,y^*)^T(I-W^*)\lambda_g^*=0,\notag\\
   & \mathcal{J}_y h(x^*,y^*)\lambda_{\mathcal{L}}^*=0,\notag \\
  &\mathcal{J}_y g(x^*,y^*)\lambda_{\mathcal{L}}^*-W^*\lambda_g^*=0,\notag\\
  &H(x^*,y^*)=0,\\
  &0\le \lambda_G^*\bot G(x^*,y^*)\le 0,\notag
\end{align}
and  (\ref{kkt}) hold. Furthermore, (\ref{3-1}) are equivalent to (\ref{3-2}) and  (\ref{kkt}).
\end{theorem}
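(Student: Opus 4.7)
Under (A2) and (A4), Lemma \ref{lem MFCQ SSOSC} together with Remark 3.6 of \cite{Liu 25} imply that bi-local optimality of $(x^*,y^*)$ for (BP) is equivalent to $(x^*,y^*,\mu^*,\xi^*)$ being a local optimal solution of (FP). Since the GMFCQ for $\Omega_2$ holds, I would apply Clarke's nonsmooth multiplier rule \cite[Theorem 6.1.1]{Clarke} to (FP) to obtain multipliers $\lambda_G^*\ge0$ with $\langle\lambda_G^*,G(x^*,y^*)\rangle=0$, together with $\lambda_H^*,\lambda_{\mathcal{L}}^*,\lambda_h^*,\lambda_g^*$, such that $0\in\partial L^{FP}(x^*,y^*,\mu^*,\xi^*;\lambda^*)$. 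The only nonsmoothness sits in the term $\lambda_g^T(g-\Pi_{\mathbb{R}_-^s}(g+\xi))$; by the chain rule \cite[Theorem 10.20]{Clarke 13} combined with Proposition 2.4 of \cite{Daiyh and Zhanglw}, this Clarke generalized Jacobian is represented via some $W^*\in\partial\Pi_{\mathbb{R}_-^s}(g(x^*,y^*)+\xi^*)$. Splitting the resulting inclusion into its $x$-, $y$-, $\mu$-, $\xi$-components produces the four stationarity equations in (\ref{3-2}), while feasibility of $(x^*,y^*,\mu^*,\xi^*)$ for (FP) supplies the remaining feasibility/complementarity conditions and (\ref{kkt}).

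\textbf{Step 2: Equivalence $(\ref{3-2})+(\ref{kkt})\Leftrightarrow(\ref{3-1})$ via linear algebra.} Since every $W^*\in\partial\Pi_{\mathbb{R}_-^s}$ is diagonal (in particular $W^{*T}=W^*$), the last three equations of (\ref{3-2}) can be rewritten as the linear system
\[
\mathcal{A}(x^*,W^*)^T\begin{pmatrix}\lambda_{\mathcal{L}}^*\\ \lambda_h^*\\ \lambda_g^*\end{pmatrix}=\begin{pmatrix}-\nabla_{y}L(x^*,y^*;\lambda_H^*,\lambda_G^*)\\ 0\\ 0\end{pmatrix}.
\]
Because $\mathcal{A}(x^*,W^*)$ is nonsingular \cite[Proposition 2.2]{Daiyh and Zhanglw}, this system uniquely determines $(\lambda_{\mathcal{L}}^*,\lambda_h^*,\lambda_g^*)$. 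Substituting this solution into the first equation of (\ref{3-2}) and invoking the definition of $\mathcal{H}(x^*,W^*)$ in (\ref{H(x,W)}) collapses that equation to the single condition of (\ref{3-1}). Conversely, starting from (\ref{3-1}) I would define $(\lambda_{\mathcal{L}}^*,\lambda_h^*,\lambda_g^*)$ as the unique solution of the same linear system; a direct block-matrix computation then verifies each equation of (\ref{3-2}). Conditions (\ref{kkt}) are precisely the lower-level feasibility/KKT system, which is in force because under (A4) the bi-local assumption forces $y^*\in S(x^*)$ to satisfy (\ref{kkt}) with the unique pair $(\mu^*,\xi^*)$.

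\textbf{Main obstacle.} The principal technical hurdle is keeping the block-transpose bookkeeping straight and ensuring that the $W^*$ produced by Clarke's chain rule in Step 1 is the very same $W^*$ that appears in the matrix $\mathcal{H}(x^*,W^*)$ used to phrase (\ref{3-1}); the identification relies on $W^{*T}=W^*$ for the componentwise projection onto $\mathbb{R}_-^s$. Once this is in place, both the derivation of (\ref{3-2}) from the Clarke multiplier rule and the back-and-forth translation between (\ref{3-1}) and (\ref{3-2}) reduce to clean linear-algebraic manipulations using invertibility of $\mathcal{A}(x^*,W^*)$, much as in the proof of Proposition \ref{prop MFCQ of SP FP}.
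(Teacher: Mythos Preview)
Your proposal is correct and follows essentially the same route as the paper: reduce to local optimality of $(x^*,y^*,\mu^*,\xi^*)$ for (FP) via \cite[Remark~3.6]{Liu 25}, apply Clarke's multiplier rule under GMFCQ together with a chain rule for the projection term to obtain (\ref{3-2}), and then establish the equivalence with (\ref{3-1}) by block linear algebra using the nonsingularity of $\mathcal{A}(x^*,W^*)$. The paper is terser on two points---it cites \cite[Proposition~2.3.3 and Theorem~2.3.9]{Clarke} for the chain rule (your reference to \cite[Proposition~2.4]{Daiyh and Zhanglw} is not needed here, since no implicit function appears in $L^{FP}$) and it defers the equivalence to \cite[Corollary~4.1]{Liu 25}---whereas you spell out the $\mathcal{A}(x^*,W^*)^T$ system explicitly, which is exactly the right computation.
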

\begin{proof}
 Since assumptions \textnormal{(A2)} and \textnormal{(A4)} hold and $(x^*,y^*)$ is a bi-local solution of (BP), then (\ref{kkt}) holds and
  $(x^*,y^*,\mu^*,\xi^*)$ is a local solution of (FP) from \cite[Remark 3.6]{Liu 25}.
 It follows from \cite[Theorem 6.1.1]{Clarke} and
  GMFCQ holds for $\Omega_2$ at $(x^*,y^*,\mu^*,\xi^*)$ that
  there exists
$(\lambda_H^*,\lambda_G^*,\lambda_{\mathcal{L}}^*,\lambda_h^*,\lambda_g^*)
  \in\mathbb{R}^{p}\times\mathbb{R}_{+}^{q}\times\mathbb{R}^{m}
  \times\mathbb{R}^{r}\times\mathbb{R}^{s}$, not all zero, such that
   $\langle \lambda_{G}^*, G(x^*,y^*)\rangle= 0$ and
\begin{align*}
 0 \in& \partial_{(x,y,\mu,\xi)} L^{FP}(x^*,y^*,\mu^*,\xi^*;\lambda^*)\\
 \subseteq& \left\{\left[\begin{array}{c}
                \nabla_{x}L(x^*,y^*;\lambda_H^*,\lambda_G^*)
                  \\
               \nabla_{y}L(x^*,y^*;\lambda_H^*,\lambda_G^*)
                  \\
               0 \\
                0
              \end{array}\right]\right. +\left[\begin{array}{c}
                            \nabla_{xy}^2 \mathcal{L}(x^*;y^*,\mu^*,\xi^*)\lambda_{\mathcal{L}}^* \\
                            \nabla_{yy}^2 \mathcal{L}(x^*;y^*,\mu^*,\xi^*)\lambda_{\mathcal{L}}^* \\
                                                      \mathcal{J}_y h(x^*,y^*)\lambda_{\mathcal{L}}^* \\
                                                      \mathcal{J}_y g(x^*,y^*)\lambda_{\mathcal{L}}^*
                                                    \end{array}\right]
                                                    +\left[\begin{array}{c}
                         \mathcal{J}_x h(x^*,y^*)^T\lambda_h^* \\
                         \mathcal{J}_y h(x^*,y^*)^T\lambda_h^* \\
                         0 \\
                         0
                       \end{array}\right]\notag\\
              &~~~  +      \left.\left[\begin{array}{c}
                      \mathcal{J}_x
g(x^*,y^*)^T(I-W^*)\lambda_g^* \\
                      \mathcal{J}_y
g(x^*,y^*)^T(I-W^*)\lambda_g^*\\
                      0\\
                    -W^*\lambda_g^*
                    \end{array}\right]:W^*\in \partial \Pi_{\mathbb{R}_{-}^{s}}(g(x^*,y^*)+\xi^*)
              \right\},
\end{align*}
where the last inclusion relation holds by Proposition 2.3.3 (Finite Sums) and Theorem 2.3.9 (Chain Rule $\text{\uppercase\expandafter{\romannumeral1}}$)
of \cite{Clarke}.
Therefore (\ref{3-2}) holds.

The proof of the equivalence of    the first-order conditions is similar to the one in  \cite[Corollary 4.1]{Liu 25}. We complete the proof.
\end{proof}

\subsection{Second-order necessary and sufficient optimality conditions}\label{sec second-order optimality
condition by directional}
In this subsection, we derive the second-order optimality conditions for (BP) under
different assumptions  by exploiting directional derivatives.
Under the  MSCQ, the critical cones at $x^*$ for (SP) and
at
$(x^*,y^*,\mu^*,\xi^*)$ for (FP) can be represented by
\begin{align*}
  & \mathcal{C}_{\Omega_1}(x^*)=\mathcal{T}_{\Omega_1}(x^*)\cap \{d_{x}\in\mathbb{R}^n:\nabla_x F(x^*, y^*)^{T}
  d_x+\nabla_y F(x^*, y^*)^{T} y'(x;d_x)  \leq 0\},\\
  &\mathcal{C}_{\Omega_2}(x^*,y^*,\mu^*,\xi^*)=\mathcal{T}_{\Omega_2}(x^*,y^*,\mu^*,\xi^*)\cap
  \{d\in\mathbb{R}^{n+m+r+s}:\nabla F(x^*, y^*)^{T}d_{(x,y)}  \leq 0\},
\end{align*}
respectively.

\begin{theorem}[Second-order necessary and sufficient optimality conditions]\label{thm SOSC for SP}
Suppose that assumptions \textnormal{(A1)}--\textnormal{(A3)} hold and the MSCQ for $\Omega_1$ holds at $x^*$.
\begin{itemize}
  \item[(i)]If $(x^*,y^*)$ is a bi-local solution of (BP), then for every $d_x\in \mathcal{C}_{\Omega_1}(x^*)$ and all $w_x\in
      \mathbb{R}^n$ satisfying
      \begin{align}\label{second-order tangent set of Omega_1}
 &   \tilde{d}^T\nabla^2 H_i(x^*, y^*)\tilde{d} +\nabla H_i(x^*, y^*)^{T}\tilde{w}=0,~ i\in[p], \\
 &    \tilde{d}^T\nabla^2 G_i(x^*, y^*)\tilde{d}  +\nabla G_i(x^*, y^*)^{T}\tilde{w}\le0,~ i\in I_G^1,\notag
\end{align}
where $\tilde{d}:=[d_x; y'(x^*;d_x)]$, $\tilde{w}:=[w_x;y''(x^*;d_x,w_x)]$  and $I_G^1:=\{i\in I_G:\nabla_x G_i(x^*, y^*)^{T} d_x+\nabla_y G_i(x^*, y^*)^{T} y'(x^*;d_x)=0\}$,
       it follows that
   \begin{align}\label{align F''}
        &\tilde{d}^T\nabla^2 F(x^*, y^*)\tilde{d}+\nabla F(x^*, y^*)^{T}\tilde{w}\ge0.\notag
      \end{align}
  \item[(ii)] Assume that for every $d_x\in\mathcal{T}_{\Omega_1}(x^*)$, we
        have
 \begin{align*}
   & \nabla_x F(x^*, y^*)^{T} d_x+\nabla_y F(x^*, y^*)^{T} y'(x^*;d_x)\ge 0
 \end{align*} and  the optimization problem
   \begin{align}
    \min_{w_x}~&\tilde{d}^T\nabla^2 F(x^*, y^*)\tilde{d}+\nabla F(x^*, y^*)^{T}\tilde{w}\notag \\
     {\rm s.t.}~&w_x ~{\rm satisfying}~ (\ref{second-order tangent set of Omega_1})\notag,
   \end{align}
possesses a positive optimal objective value for every $d_x\in \mathcal{C}_{\Omega_1}(x^*)\backslash \{0\}$.
Then there exist $\delta_2' \in(0, \delta_2), \varepsilon_2' \in(0, \varepsilon_2)$ (where $\delta_2$ and
  $\varepsilon_2$ are given by Lemma \ref{lem MFCQ SSOSC CRCQ})  and $\gamma>0$ such that for $x \in
  \boldsymbol{B}_{\delta_2'}(x^*)$, $y(x) =\arg\min\limits_y\{f(x,y):y\in Y(x)\cap \boldsymbol{B}_{\varepsilon_2'}(y^*) \} $
  and $(x,y(x))\in \Phi$,
\begin{equation}\label{equ-growth}
  F(x,y(x))\ge F(x^*,y^*)+\gamma \|x-x^*\|^2,\notag
\end{equation}
which indicates that $\left(x^*, y^*\right)$ is a bi-local minimum point of (BP).
\end{itemize}
\end{theorem}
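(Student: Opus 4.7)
The plan is to reformulate (SP) as an instance of the abstract nonsmooth problem (P) of Section \ref{section 3} and then invoke Theorem \ref{thm SONC for nonsmooth}(ii) for part (i) and Corollary \ref{cor SOSC for nonsmooth} for part (ii). First I would apply Lemma \ref{lem MFCQ SSOSC CRCQ}: under (A1)--(A3) there is a locally Lipschitz continuous $PC^2$ implicit solution mapping $y(\cdot)$ with $y(x^*)=y^*$, which by Proposition \ref{prop PC2} is second-order directionally differentiable and second-order gph-regular at $x^*$; the same lemma guarantees that $(x^*,y^*)$ is a bi-local solution of (BP) iff $x^*$ is a local minimizer of (SP). Regarding (SP) as problem (P) with objective $F(\cdot,y(\cdot))$, constraint mapping $(H(\cdot,y(\cdot)),G(\cdot,y(\cdot)))$, closed set $K:=\{0\}^p\times\mathbb{R}^q_-$ and feasible set $\Omega_1$, what remains is to verify the prerequisites of the abstract theorems and to translate their conclusions into the stated form.

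The translation I would carry out as follows. The concatenation $x\mapsto(x,y(x))$ is second-order gph-regular (the identity component is trivial; $y(\cdot)$ is handled by Lemma \ref{lem MFCQ SSOSC CRCQ}), and the outer functions $F,H,G$ are $C^2$, hence second-order gph-regular. Proposition \ref{prop a composition second-order gph-regular} then transfers second-order gph-regularity (hence second-order epi-regularity) to the composite objective and to each component of the composite constraint mapping at $x^*$. The chain rule (\ref{composite chain rule h''}) combined with (\ref{C2 g'= g''=}) yields the explicit first- and second-order directional derivatives
\begin{align*}
(F(\cdot,y(\cdot)))'(x^*;d_x) &= \nabla F(x^*,y^*)^T\tilde d,\\
(F(\cdot,y(\cdot)))''(x^*;d_x,w_x) &= \tilde d^{\,T}\nabla^2 F(x^*,y^*)\tilde d+\nabla F(x^*,y^*)^T\tilde w,
\end{align*}
and analogously for each $H_i$ and $G_i$, with $\tilde d=[d_x;y'(x^*;d_x)]$ and $\tilde w=[w_x;y''(x^*;d_x,w_x)]$. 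Under the assumed MSCQ, Propositions \ref{prop-tangent cone=linearized} and \ref{prop-second-order tangent cone=linearized} describe $\mathcal{T}_{\Omega_1}(x^*)$ and $\mathcal{T}^2_{\Omega_1}(x^*,d_x)$ via these chain rules; because $K$ is polyhedral (hence outer second-order regular in every direction), $\mathcal{T}_K^2$ decomposes componentwise into the equalities on $H_i$ and the inequalities on $G_i$ for $i\in I_G^1$ that appear in (\ref{second-order tangent set of Omega_1}).

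With this setup in place, part (i) follows from Theorem \ref{thm SONC for nonsmooth}(ii) applied to (SP) at the local minimizer $x^*$: the critical cone of (SP) coincides with $\mathcal{C}_{\Omega_1}(x^*)$, and the conclusion $(F(\cdot,y(\cdot)))''(x^*;d_x,w_x)\ge 0$ over the second-order tangent set translates exactly into the claimed inequality in $\tilde d,\tilde w$. Part (ii) follows from Corollary \ref{cor SOSC for nonsmooth} applied to (SP): all its hypotheses (second-order epi-regularity of the composite objective, second-order gph-regularity of the composite constraint mapping, MSCQ, outer second-order regularity of the polyhedral $K$, first-order nonnegativity on $\mathcal{T}_{\Omega_1}(x^*)$ and positivity of the quadratic subproblem on $\mathcal{C}_{\Omega_1}(x^*)\setminus\{0\}$) are verified, so the corollary gives the second-order growth $F(x,y(x))\ge F(x^*,y^*)+\gamma\|x-x^*\|^2$ on a neighborhood of $x^*$ within $\Omega_1$. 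Finally, Lemma \ref{lem MFCQ SSOSC CRCQ} ensures that for $x$ close to $x^*$, $y(x)$ is the unique minimizer of $(P_x)$ over $Y(x)\cap\boldsymbol{B}_{\varepsilon_2'}(y^*)$, so this growth is precisely the bi-local second-order growth of (BP) at $(x^*,y^*)$. I expect the main delicate step to be the componentwise identification of $\mathcal{T}_K^2$ via Proposition \ref{prop-second-order tangent cone=linearized} and the careful bookkeeping that matches the critical cone produced by Corollary \ref{cor SOSC for nonsmooth} with $\mathcal{C}_{\Omega_1}(x^*)$; once those identifications are pinned down, everything else is a direct assembly of the abstract machinery of Section \ref{section 3} with Lemma \ref{lem MFCQ SSOSC CRCQ}.
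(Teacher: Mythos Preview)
Your proposal is correct and follows essentially the same approach as the paper's own proof: reduce (SP) to an instance of the abstract problem (P) via Lemma \ref{lem MFCQ SSOSC CRCQ}, identify the composite directional derivatives through the chain rule (\ref{composite chain rule h''}) and Proposition \ref{prop a composition second-order gph-regular}, compute $\mathcal{T}_{\Omega_1}^2(x^*,d_x)$ via Proposition \ref{prop-second-order tangent cone=linearized}, and then invoke Theorem \ref{thm SONC for nonsmooth}(ii) for (i) and Corollary \ref{cor SOSC for nonsmooth} for (ii). Your write-up is in fact more explicit than the paper's (e.g., naming $K=\{0\}^p\times\mathbb{R}^q_-$ and its polyhedrality for outer second-order regularity), but the underlying argument is the same.
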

\begin{proof}
Since assumptions \textnormal{(A1)}--\textnormal{(A3) hold and $(x^*,y^*)$ is a bi-local solution of (BP), we   know that
  $x^*$ is a local solution of (SP) from  Lemma \ref{lem MFCQ SSOSC CRCQ}.}
 It follows from Proposition \ref{prop-second-order tangent cone=linearized} and (\ref{composite chain rule h''})
 that
 the outer second-order tangent set $\mathcal{T}_{\Omega_1}^2(x^*,d_x)$ coincides with
 (\ref{second-order tangent set of Omega_1}).
  Since the functions involved in (SP) all are local Lipschitz continuous and second-order
 directionally differentiable at $x^*$, then from Theorem \ref{thm SONC for nonsmooth}, we can derive the
 assertion (i).

Suppose that  the condition (ii) holds. From  Lemma \ref{lem MFCQ SSOSC CRCQ},
  we know that $y(x)$ is the unique local solution of  Problem ($P_{x}$)
  and is second-order directionally differentiable and second-order gph-regular
 for $x\in \boldsymbol{B}_{\delta_2'}(x^*)$.
  By Proposition \ref{prop a composition
   second-order gph-regular}, the functions involved in (SP) all  are second-order gph-regular at $x^*$.
   Then it follows from 
Corollary \ref{cor SOSC for nonsmooth}   that the second-order sufficient optimality condition at $x^*$ guarantees the
   local quadratic growth condition.
We complete the proof.
\end{proof}

\begin{theorem}\label{thm SOSC for FP}
 Suppose that assumptions  \textnormal{(A2)} and \textnormal{(A4)}  hold and the MSCQ for $\Omega_2$ holds at $(x^*,y^*,\mu^*,\xi^*)$.
  \begin{itemize}
    \item[(i)]If $(x^*,y^*)$ is a bi-local solution of (BP),
 then for every $d\in
      \mathcal{C}_{\Omega_2}(x^*,y^*,\mu^*,\xi^*)$ and all $w\in \mathbb{R}^{n+m+r+s}$ satisfying
      \begin{subequations}\label{second-order tangent set of Omega_2}
      \begin{align}
      \label{Omega_2 a}
 &  d_{(x,y)}^T\nabla^2 H_i(x^*, y^*)d_{(x,y)}  +\nabla H_i(x^*, y^*)^{T}w_{(x,y)}=0,~ i\in[p], \\
 \label{Omega_2 b}
 &    d_{(x,y)}^T\nabla^2 G_i(x^*, y^*)d_{(x,y)}  +\nabla G_i(x^*, y^*)^{T}w_{(x,y)}\le0,~ i\in I_G^2,\\
 \label{Omega_2 c}
   &d^T\nabla^2 \left(\frac{\partial\mathcal{L}}{\partial y_i}\right)(x^*; y^*,\mu^*,\xi^*)d  +\nabla
   \left(\frac{\partial\mathcal{L}}{\partial y_i}\right)(x^*; y^*,\mu^*,\xi^*)^{T}w=0,~ i\in[m] ,\\
   \label{Omega_2 d}
   & d_{(x,y)}^T\nabla^2 h_i(x^*, y^*)d_{(x,y)}  +\nabla h_i(x^*, y^*)^{T}w_{(x,y)}=0,~ i\in[r], \\
& d_{(x,y)}^T\nabla^2 g_i(x^*, y^*)d_{(x,y)} +\nabla g_i(x^*,
             y^*)^{T}w_{(x,y)}-\Pi_{\mathbb{R}_-}''(g_i(x^*,y^*)+\xi_i^*;\nabla g_i(x^*, y^*)^{T}d_{(x,y)}\notag\\
\label{Omega_2 e}
&~~+d_{\xi_i},d_{(x,y)}^T\nabla^2 g_i(x^*, y^*)d_{(x,y)}+\nabla g_i(x^*, y^*)^{T}w_{(x,y)}+w_{\xi_i})=0,~ i\in [s],
      \end{align}
       \end{subequations}
      where $I_G^2:=\{i\in I_G:\nabla G_i(x^*, y^*)^{T} d_{(x,y)}=0\}$,
       it follows that
   \begin{align}
      &  d_{(x,y)}^T\nabla^2 F(x^*, y^*) d_{(x,y)}+\nabla F(x^*, y^*)^{T} w_{(x,y)}\ge 0.\notag
   \end{align}
    \item [(ii)]Assume that for every
        $d\in\mathcal{T}_{\Omega_2}(x^*,y^*,\mu^*,\xi^*)$, we have
 \begin{align*}
   & \nabla F(x^*, y^*)^{T} d_{(x,y)}\ge 0
 \end{align*} and  the optimization problem
   \begin{align}
    \min_{w}~~&d_{(x,y)}^T\nabla^2 F(x^*, y^*) d_{(x,y)}+\nabla F(x^*, y^*)^{T} w_{(x,y)}\notag \\
     {\rm s.t.}~~&w ~{\rm satisfying}~ (\ref{second-order tangent set of Omega_2}).\notag
   \end{align}
possesses a positive optimal objective value for every $d\in \mathcal{C}_{\Omega_2}(x^*,y^*,\mu^*,\xi^*)\backslash \{0\}$. Then there exist  $\delta_3' \in(0, \delta_3), \varepsilon_3' \in(0, \varepsilon_3)$ (where $\delta_3$ and
  $\varepsilon_3$ are given by Lemma \ref{lemma Assump A y()})
   and $\gamma'>0$ such that for $x \in
  \boldsymbol{B}_{\delta_3'}(x^*)$, $y(x) =\arg\min\limits_y\{f(x,y):y\in Y(x)\cap \boldsymbol{B}_{\varepsilon_3'}(y^*) \} $
  and $(x,y(x))\in \Phi$,
\begin{equation}\label{equ-growth}
  F(x,y(x))\ge F(x^*,y^*)+\gamma' \|x-x^*\|^2,
\end{equation}
which indicates that $\left(x^*, y^*\right)$ is a bi-local minimum point of (BP).
  \end{itemize}

Furthermore, if
the GMFCQ for $\Omega_1$ holds at $x^*$, or $\Omega_2$ holds at $(x^*,y^*,\mu^*,\xi^*)$, then the second-order conditions are equivalent to the ones in Theorem \ref{thm SOSC for SP}.
\end{theorem}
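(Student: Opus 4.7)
The plan is to mirror the proof of Theorem \ref{thm SOSC for SP}, now applied to the first-order reformulation (FP), and then to read off the equivalence of the (SP)- and (FP)-formulations of the second-order conditions through the implicit map $(y(x),\mu(x),\xi(x))$. For part (i), I would first invoke Lemma \ref{lemma Assump A y()} to obtain a locally Lipschitz, second-order directionally differentiable KKT-solution map $(y(x),\mu(x),\xi(x))$. Since (A4) implies (A1), Remark 3.6 of \cite{Liu 25} shows that the bi-local solution $(x^*,y^*)$ of (BP) corresponds to a local solution $(x^*,y^*,\mu^*,\xi^*)$ of (FP). The constraint mapping of (FP)---built from $H$, $G$, $\nabla_y\mathcal{L}$, $h$ and the nonsmooth block $g-\Pi_{\mathbb{R}_{-}^s}(g+\xi)$---is locally Lipschitz and second-order directionally differentiable, so Proposition \ref{prop-second-order tangent cone=linearized} together with the chain rule (\ref{composite chain rule h''}) identifies $\mathcal{T}_{\Omega_2}^2((x^*,y^*,\mu^*,\xi^*),d)$ with precisely the system (\ref{Omega_2 a})--(\ref{Omega_2 e}). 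Applying Theorem \ref{thm SONC for nonsmooth} to (FP) then delivers the claimed second-order inequality.

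For part (ii), the key additional ingredient is second-order gph-regularity of the full constraint mapping of (FP) at $(x^*,y^*,\mu^*,\xi^*)$. The $C^2$ blocks $H$, $G$, $h$, $\nabla_y\mathcal{L}$ are automatically gph-regular; the coordinate projection $\Pi_{\mathbb{R}_{-}^s}$ is a coordinate-wise minimum, hence second-order gph-regular by Example \ref{example min}. Proposition \ref{prop a composition second-order gph-regular} therefore gives the joint second-order gph-regularity of $g-\Pi_{\mathbb{R}_{-}^s}(g+\xi)$. With this in hand, Proposition \ref{prop G^-1(K) second order regular} provides outer second-order regularity of $\Omega_2$, and Corollary \ref{cor SOSC for nonsmooth} yields a quadratic growth of $F$ over $\Omega_2$ in the variables $(x,y,\mu,\xi)$. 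To pass from this to the bi-local growth (\ref{equ-growth}) in $x$ alone, I would use the local Lipschitz continuity of $(y(x),\mu(x),\xi(x))$ from Lemma \ref{lemma Assump A y()}: for $x\in\boldsymbol{B}_{\delta_3'}(x^*)$ the triple $(x,y(x),\mu(x),\xi(x))$ lies in $\Omega_2$ and $\|(y(x)-y^*,\mu(x)-\mu^*,\xi(x)-\xi^*)\|\le L\|x-x^*\|$, so the quadratic estimate in $(x,y,\mu,\xi)$ collapses to one of the form $\gamma'\|x-x^*\|^2$.

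For the final equivalence claim, I would first apply Proposition \ref{prop MFCQ of SP FP} to identify GMFCQ for $\Omega_1$ at $x^*$ with GMFCQ for $\Omega_2$ at $(x^*,y^*,\mu^*,\xi^*)$, which in particular implies both MSCQs so Theorems \ref{thm SOSC for SP} and this one are simultaneously applicable. The bijection $d_x\mapsto(d_x,y'(x^*;d_x),\mu'(x^*;d_x),\xi'(x^*;d_x))$ between $\mathcal{C}_{\Omega_1}(x^*)$ and $\mathcal{C}_{\Omega_2}(x^*,y^*,\mu^*,\xi^*)$ established in the proof of Theorem \ref{thm FONC for FP} upgrades at the second-order level to a bijection $w_x\mapsto(w_x,y''(x^*;d_x,w_x),\mu''(x^*;d_x,w_x),\xi''(x^*;d_x,w_x))$, because (\ref{Omega_2 c})--(\ref{Omega_2 e}) are precisely the second-order chain-rule equations forced by differentiating (\ref{Fkkt}) twice and, under (A2) and (A4), the coefficient matrices in $\mathbb{A}_C(x^*)$ are nonsingular. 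Substituting these identities into the objective $d_{(x,y)}^T\nabla^2 F d_{(x,y)}+\nabla F^{T}w_{(x,y)}$ recovers $\tilde d^T\nabla^2 F\tilde d+\nabla F^{T}\tilde w$ of Theorem \ref{thm SOSC for SP}, giving the asserted equivalence.

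The main obstacle I anticipate is the explicit computation showing that $\mathcal{T}_{\Omega_2}^2$ is exactly (\ref{Omega_2 a})--(\ref{Omega_2 e})---in particular, extracting the nonsmooth second-order contribution $\Pi_{\mathbb{R}_-}''(\cdot;\cdot,\cdot)$ in (\ref{Omega_2 e}) jointly in $(x,y,\xi)$ from the chain rule of Proposition \ref{prop a composition second-order gph-regular}---together with the bookkeeping needed to transfer quadratic growth from the enlarged $(x,y,\mu,\xi)$-space of (FP) back to $x$-space without losing the growth modulus when only $y(x)\in Y(x)\cap\boldsymbol{B}_{\varepsilon_3'}(y^*)$ (rather than the full KKT triple) is assumed.
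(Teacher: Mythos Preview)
Your proposal is correct and follows essentially the same route as the paper: Remark 3.6 of \cite{Liu 25} for the local-solution transfer, Proposition \ref{prop-second-order tangent cone=linearized} plus the chain rule for the identification of $\mathcal{T}_{\Omega_2}^2$ with (\ref{Omega_2 a})--(\ref{Omega_2 e}), Theorem \ref{thm SONC for nonsmooth} for (i), Example \ref{example min} and Proposition \ref{prop a composition second-order gph-regular} for gph-regularity together with Corollary \ref{cor SOSC for nonsmooth} for (ii), and the same $F_{KKT}''=0$/nonsingularity-of-$\mathbb{A}_C(x^*)$ argument (via the proof of Theorem \ref{thm FONC for FP}) for the equivalence. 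Your last worry about transferring the quadratic growth to $x$-space is milder than you suggest: once Lemma \ref{lemma Assump A y()} places $(x,y(x),\mu(x),\xi(x))\in\Omega_2$ in the growth neighborhood, the trivial inequality $\|x-x^*\|^2\le\|(x,y(x),\mu(x),\xi(x))-(x^*,y^*,\mu^*,\xi^*)\|^2$ gives (\ref{equ-growth}) directly, so no Lipschitz modulus needs to be absorbed into $\gamma'$.
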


\begin{proof}
 Since assumptions  \textnormal{(A2)} and \textnormal{(A4)}  hold and  $(x^*,y^*)$ is a bi-local solution of (BP), we  know that
  $(x^*,y^*,\mu^*,\xi^*)$ is a local solution of (FP) from   \cite[Remark 3.6]{Liu 25}.
It follows from Proposition \ref{prop-second-order tangent cone=linearized} and (\ref{composite chain rule h''})
 that we derive that the outer second-order tangent set
 $\mathcal{T}_{\Omega_2}^2((x^*,y^*,\mu^*,\xi^*),d)$ coincides with (\ref{second-order tangent set of Omega_2}).
 Since the functions involved in (FP) are local Lipschitz continuous and
second-order directionally differentiable, then it follows from Theorem \ref{thm SONC for nonsmooth} that we can derive the
 assertions (i).

Suppose that the condition (ii) holds.  From Lemma \ref{lemma Assump A y()}, we know  that
$(y(x),\mu(x),\xi(x))$ is   the locally unique solution pair satisfying the KKT conditions of  Problem ($P_{x}$) for $x\in \boldsymbol{B}_{\delta_3'}(x^*)$.
By  Example \ref{example min} and  Proposition \ref{prop a composition
   second-order gph-regular},  the functions involved in (FP) are second-order gph-regular at $(x^*,y^*,\mu^*,\xi^*)$.
   Then it follows from 
 Corollary \ref{cor SOSC for nonsmooth}  that
 (\ref{equ-growth}) holds.

Finally, we only need to prove that the second-order conditions of Theorems \ref{thm SOSC for SP} and \ref{thm SOSC for FP} are equivalent.
It follows from the proof of Theorem \ref{thm FONC for FP} that
$d_x\in \mathcal{C}_{\Omega_1}(x^*)$ if and only if $d=(d_x,y'(x^*;d_x),\mu'(x^*;d_x),\xi'(x^*;d_x))\in
\mathcal{C}_{\Omega_2}(x^*,y^*,\mu^*,\xi^*)$, $I_G^1=I_G^2$.

From (\ref{Fkkt}), we obtain that $F_{KKT}''(x;d_x,w_x)=0$ for any $w_x$.
Then for any $w_x\in \mathcal{T}_{\Omega_1}^2(x^*,d_x)$, it follows from the chain rule in  (\ref{composite chain rule h''}) that  (\ref{Omega_2 a})-(\ref{Omega_2 e}) hold, i.e.,
 $$(w_x,y''(x^*;d_x,w_x),\mu''(x^*;d_x,w_x),\xi''(x^*;d_x,w_x))\in
  \mathcal{T}_{\Omega_2}^2((x^*,y^*,\mu^*,\xi^*),d).$$
Similarly as the proof of  Theorem \ref{thm FONC for FP}, for fixed $d\in \mathcal{C}_{\Omega_2}(x^*,y^*,\mu^*,\xi^*)$,  the system (\ref{Omega_2 c})-(\ref{Omega_2 e}) has a unique solution $(w_x,y''(x^*;d_x,w_x),\mu''(x^*;d_x,w_x),\xi''(x^*;d_x,w_x))$.

 Conversely, for any $w\in \mathcal{T}_{\Omega_2}^2((x^*,y^*,\mu^*,\xi^*),d)$, it follows from (\ref{Omega_2 a}) and (\ref{Omega_2 b}) that
 $(w_x,y''(x^*;d_x,w_x))\in \mathcal{T}_{\Omega_1}^2(x^*,d_x)$.
  It
is straightforward to derive that the second-order conditions are equivalent.
The proof is completed.
\end{proof}

Under the MFCQ, SSOSC, and CRCQ, Theorem \ref{thm SOSC for SP} establishes second-order conditions for bi-local solutions, which depend only on the first- and second-order derivatives of the problem data together with the lower-level solution mapping. When LICQ holds, these conditions become fully explicit. In contrast, the second-order condition in \cite[Theorem 5.3]{Mehlitz 21} applies to classical local solutions and is formulated through the second-order epi-regularity of $-\varphi$, where $\varphi$ denotes the optimal value function of the lower-level problem.
This condition requires lower-level convexity, LICQ, and the SOSC \cite[Proposition 5.10]{Mehlitz 21}.
Furthermore, their condition requires the computation of second-order directional derivatives of the value function $\varphi$, which is typically intricate in bilevel optimization.
 Notably, when the lower-level problem is convex, our notion of bi-local solution reduces to a local solution, so our conditions provide a more explicit and verifiable alternative to those in \cite{Mehlitz 21}.


 We conclude this section with an illustrative example.
 \begin{example}
   Consider the following bilevel problem:
   \begin{eqnarray*}
\min\limits_{x,y} && F(x,y):=x^2+(y-1)^2\\
{\rm s.t.} && G(x,y):=1-2x-y\le0,\\
&& y\in \arg\min\limits_{y}\ f(x,y):=(y-2)^2\\
&&~~~~~~~~~~~{\rm s.t.}\ \
g(x,y) :=\left(\begin{array}{c}
                x+y-1 \\
                -x+y-1
              \end{array}\right)\leq 0.
              \end{eqnarray*}
 \end{example}
 The global solution function and the corresponding set of Lagrange multipliers for the lower-level problem are given by
 \begin{align*}
    & y(x)=\left\{\begin{array}{ll}
                                        1+x, &x\le0,  \\
                                        1-x,&x>0,
                                       \end{array}\right.
                                       ~\text{and}~
                                        \Lambda_{x}(y(x))=
                                       \left\{\begin{array}{ll}
                                        (0,2(1-x)), &x<0,  \\
                                        co\{(0,2),(2,0)\}, &x=0,\\
                                        (2(1+x),0),&x>0,
                                       \end{array}\right.
 \end{align*}
 respectively.
 Consider the point $(x^*,y^*)=(0,1)$. The lower-level problem $(P_{x^*})$ satisfies the SSOSC, MFCQ and CRCQ, but not LICQ at $y^*$. Recall that
 \begin{align*}
     \Omega_1=\{x\in\mathbb{R}:G(x,y(x))\le0\}&=\{x\in\mathbb{R}:1-2x-y(x)\le0\}\\
     &=\{x\le0:1-2x-(1+x)\le0\}\cup\{x>0:1-2x-(1-x)\le0\} \\
     &=\{x\ge 0\},
 \end{align*}
    so that  the tangent cone at $x^*$ is  $T_{\Omega_1}(x^*)=\{d_x\in\mathbb{R}:d_x\ge0\}$.
 Let $\widehat{F}(x):=F(x,y(x))$. Then
 the directional derivative at $x^*$ is
  $$\widehat{F}'(x^*;d_x)=\nabla_x F(x^*, y^*)^{T} d_x+\nabla_y F(x^*, y^*)^{T}y'(x^*;d_x)=0.$$
  Therefore, for every $d_x\in T_{\Omega_1}(x^*)$, it holds that $\widehat{F}'(x^*;d_x)\ge 0$. The   critical cone  at $x^*$ for implicit   reformulation (SP) is
 \begin{align*}
  & \mathcal{C}_{\Omega_1}(x^*)=  \{d_{x}\in\mathcal{T}_{\Omega_1}(x^*):\widehat{F}'(x^*;d_x)\leq 0\}=\{d_x\ge 0\}.
\end{align*}
Hence for every $d_x\in \mathcal{C}_{\Omega_1}(x^*)\backslash \{0\}$, i.e. $d_x>0$, we obtain  from (\ref{C2 g'= g''=}) and (\ref{composite chain rule h''})  that
  \begin{align*}
   & \widehat{F}''(x^*;d_x,w_x)\\
    =&\nabla F(x^*, y^*)^{T}\left( \begin{array}{c}
                                                         w_x \\
                                                         y''(x^*;d_x,w_x)
                                                       \end{array}\right)+
                                                       \left( \begin{array}{c}
                                                         d_x \\
                                                         y'(x^*;d_x)
                                                       \end{array}\right)^T\nabla^2 F(x^*, y^*)\left( \begin{array}{c}
                                                         d_x \\
                                                         y'(x^*;d_x)
                                                       \end{array}\right)\\
    =&2(d_x)^2+2(y'(x^*;d_x))^2>0.
 \end{align*}
 Consequently, the point $(x^*,y^*)$ is a strict local minimizer of the given bilevel optimization problem  by
 Theorem \ref{thm SOSC for SP}.

\section{Conclusions}\label{section 5}
Based on the parabolic curve approach, we establish comprehensive no-gap second-order necessary and sufficient optimality conditions for constrained nonsmooth optimization by introducing the novel concept of second-order gph-regularity and deriving the outer second-order regularity of the feasible region, without imposing convexity assumptions on the constraint set.
In the second part, we show that the local solution mapping of a parametric problem satisfies second-order gph-regularity under the MFCQ, SSOSC, and CRCQ.
Moreover, under the SSOSC and LICQ, these conditions are equivalent to second-order optimality conditions that involve only the directional derivatives of the defining functions of the bilevel problem.

\end{document}